\documentclass[11pt,a4paper]{article}

\usepackage[T1]{fontenc}
\usepackage[utf8]{inputenc}
\usepackage{lmodern}
\usepackage[english]{babel}

\usepackage[margin=1in]{geometry}
\usepackage{microtype}
\usepackage{setspace}

\usepackage{amsmath}
\usepackage{amsfonts}
\usepackage{amssymb}
\usepackage{amsthm}
\usepackage{mathtools}
\usepackage{bm}
\usepackage{dsfont}

\allowdisplaybreaks
\mathtoolsset{showonlyrefs=true}
\numberwithin{equation}{section}

\usepackage{graphicx}

\usepackage{epstopdf}

\usepackage{float}
\usepackage{multicol}
\usepackage{multirow}
\usepackage{booktabs} 
\usepackage{tabularx} 
\usepackage{mathrsfs}
\usepackage{caption}
\usepackage{subcaption}
\usepackage{algorithm}
\usepackage{algpseudocode}
\usepackage{listings}

\usepackage{comment}
\usepackage{enumitem}
\usepackage{textcomp}
\usepackage{manyfoot}

\usepackage[dvipsnames]{xcolor}

\definecolor{NoteYellow}{RGB}{255,250,200}
\definecolor{NoteBlue}{RGB}{240,248,255}
\definecolor{NoteGreen}{RGB}{240,255,240}

\usepackage[authoryear,round]{natbib}

\newcommand{\cF}{\mathcal{F}}

\newcommand{\cK}{\mathcal{K}}

\newcommand{\cU}{\mathcal{U}}

\newcommand{\EE}{\mathbb{E}}
\newcommand{\FF}{\mathbb{F}}

\newcommand{\NN}{\mathbb{N}}
\newcommand{\PP}{\mathbb{P}}
\newcommand{\QQ}{\mathbb{Q}}
\newcommand{\R}{\mathbb{R}}

\newcommand{\drm}{\mathrm{d}}

\DeclareMathOperator{\tr}{tr}

\DeclareMathOperator{\Int}{Int}

\newcommand{\ep}{\epsilon}

\newcommand{\ti}{\widetilde}

\newcommand{\Uad}{\cU_{ad}}
\newcommand{\dd}{\,\drm}
\newcommand{\ds}{\,\drm s}
\newcommand{\ind}{\mathds{1}}
\newcommand{\Nt}{\widetilde{N}}
\newcommand{\Yt}{\ti{Y}}
\newcommand{\CM}{C_M}

\newcommand{\Sp}{\mathcal S}
\newcommand{\HW}{\mathcal H_W}
\newcommand{\JN}{\mathcal J_N}
\newcommand{\JY}{\mathcal J_Y}
\newcommand{\Bspace}{\mathcal B}
\newcommand{\bX}{\bar X}
\newcommand{\bM}{\bar M}
\newcommand{\bmu}{\bar\mu}
\newcommand{\bu}{\bar u}
\newcommand{\bN}{\bar N}
\DeclareMathOperator{\Lip}{Lip}
\DeclareMathOperator{\Pred}{Pred}
\DeclareMathOperator{\sech}{sech}

\newcommand{\Dmu}{D_\mu}

\newcommand{\doi}[1]{\href{https://doi.org/#1}{\nolinkurl{#1}}}
\DeclareMathOperator{\Law}{Law}
\newcolumntype{Y}{>{\raggedright\arraybackslash}X}
\newcommand{\norm}[1]{\left\lVert #1\right\rVert}

\newcommand{\inner}[2]{\left\langle #1,#2\right\rangle}

\theoremstyle{plain}

\newtheorem{theorem}{Theorem}[section]
\newtheorem{proposition}[theorem]{Proposition}
\newtheorem{lemma}[theorem]{Lemma}
\newtheorem{corollary}[theorem]{Corollary}
\newtheorem{assumption}[theorem]{Assumption}

\theoremstyle{definition}

\theoremstyle{remark}

\newtheorem{remark}[theorem]{Remark}

\usepackage[
unicode=true,
colorlinks=true,
linkcolor=NavyBlue,
citecolor=BrickRed,
urlcolor=NavyBlue,
filecolor=NavyBlue,
bookmarksopen=true,
bookmarksnumbered=true
]{hyperref}

\hypersetup{pdftitle={Partially Observed McKean--Vlasov Control with Vanishing Observation Intensities},pdfauthor={Jie Xiong and Ying Yang}}

\title{Partially Observed McKean--Vlasov Control with Vanishing Observation Intensities\thanks{This work is supported by the National Key R\&D Program of China under Grant No.~2022YFA1006102, and the National Natural Science Foundation of China under Grant No.~12471418.}}
\author{ 
	Jie Xiong\thanks{
		Department of Mathematics and Shenzhen International Center for Mathematics,
		Southern University of Science and Technology, Shenzhen, China.
		Email: \texttt{xiongj@sustech.edu.cn}.
	}
	\and
	Ying Yang\thanks{
		Department of Mathematics,
		Southern University of Science and Technology, Shenzhen, China.
		Email: \texttt{12331007@mail.sustech.edu.cn}.
	}
}
\date{}
\begin{document}
	\maketitle
\begin{abstract}
	We study partially observed McKean--Vlasov jump-diffusions with controlled counting intensities that may vanish and depend on the unconditional physical state distribution. A deterministic upper bound on the likelihood supports one-sided state--likelihood variations, a smooth-test-function expansion of the physical state marginal, and a necessary maximum principle without inverse intensity or likelihood terms. We also establish a one-sided total-variation derivative of the physical probability on the fixed noise space, including a nonnegative singular component on baseline-null histories. A quadratic example shows, within a specified policy family and under explicit conditions, that positive variance feedback raises the first-order activation fee threshold while negative feedback lowers it.
\end{abstract}
	
	\noindent\textbf{Keywords:} partially observed control; McKean--Vlasov equation; counting observations; stochastic maximum principle; observation activation.
	\section{Introduction} 
	
Counting processes provide a natural model for observations consisting of the times and types of discrete events. When their intensities are controlled, decisions affect the information available for subsequent actions. A difficulty arises when an observation channel is inactive under a baseline policy: histories containing an arrival in that channel have zero baseline probability but may acquire probability of first order under activation. The marginal cost of activation may therefore depend on continuation actions that do not affect the baseline cost.

In mathematical finance, \citet{zeng2003partially} models transaction prices through counting observations of a hidden value process and develops filtering and Bayesian estimation methods. Subsequently, \citet{xiong2011branching} construct branching particle approximations for the corresponding filters. \citet{xiong2020mean} study mean–variance portfolio selection under counting observations, establishing a separation principle and deriving optimal strategies and the efficient frontier. However, these works impose a positive lower bound on the total trading intensity, rather than on each price-level intensity. Therefore, merely allowing vanishing component intensities does not set the present work apart.

Maximum principles for partially observed stochastic control and for systems with jumps also have an extensive literature; see, for example, \citet{tang1994necessary,tang1998maximum,baghery2007maximum}. Mean-field control under partial observation has been studied by \citet{buckdahn2017mean,nie2022extended,li2023stochastic}. In particular, \citet{nie2022extended} allow coefficients to depend on the joint distribution of the state and control in a Brownian observation framework. \citet{fuhrman2026optimal} develop a change-of-probability and dynamic programming approach to distribution-dependent partial observations with hidden Markov switching. Related progressive-control formulations treat partially observed mean-field systems with random jumps \citep{chen2023jumps}, mean-field systems with Markov chains \citep{chen2023markov}, and forward--backward systems with random jumps \citep{zheng2023global}.

Weighted mean-field control and the associated scalar and vector adjoints are studied in \citet{tang2023weighted,tang2026stochastic}. The analysis of \citet{tang2026stochastic} assumes that the weight jump multipliers are uniformly bounded away from zero and uses logarithmic estimates involving inverse jump multipliers. That work also permits distribution dependence in the jump coefficients and Brownian terms in the weight equation. For counting observations, \citet{xiong2026stochastic} derive coupled state--likelihood variations and a maximum principle under uniformly positive state-dependent intensities. \citet{confortola2013backward} treat nonnegative controlled intensity multipliers, including zero, using policies specified on complete jump histories.

In this paper, we study partially observed McKean--Vlasov jump-diffusions with controlled counting intensities that may vanish and depend on the hidden state, its unconditional physical distribution, and the control. We consider observation-predictable policies specified on complete counting histories. Under a fixed dominating reference probability $\QQ$, the likelihood $M^u$ and state $X^u$ determine the physical probability and its state marginal through
\[
\frac{d\PP^u}{d\QQ}\bigg|_{\mathcal F_T}
=M_T^u,
\qquad
\langle\mu_t^u,\varphi\rangle
=\EE^{\QQ}\!\left[M_t^u\varphi(X_t^u)\right].
\]
Although the likelihood may vanish, bounded nonnegative intensities yield a deterministic upper bound. This property enables a joint analysis of the state, likelihood, and endogenous physical distribution without requiring inverse intensities or likelihoods. 
The reference-measure formulation does not remove the partial-observation constraint: controls remain observation-predictable, and the maximum principle involves the observation-predictable projection of the Hamiltonian derivative.

Under the stated regularity and moment assumptions, we establish well-posedness and stability in Theorem~\ref{thm:forward}, followed by one-sided state--likelihood differentiability and a first-order expansion of the physical state marginal against specified smooth test functions in Theorem~\ref{thm:differentiability}. Using the cost expansion and adjoint duality, we derive a necessary maximum principle for a convex control set in Theorem~\ref{thm:SMP}. In Proposition~\ref{prop:singular}, we also obtain a one-sided total-variation derivative of the physical probability on the fixed noise space, with decomposition
\[
\dot{\PP}
=S_T\overline{\PP}+\nu^\perp,
\qquad
\nu^\perp\ge0,
\qquad
\nu^\perp\perp\overline{\PP}.
\]
The singular component captures first-order probability mass on baseline-null histories. This result is distinct from the test-function expansion of the moving state marginal.

Finally, we consider a quadratic example in which early observation activation changes the physical variance of accumulated actions and thereby affects later observation intensity. We identify the optimal benchmark among all admissible policies keeping the early observation interval inactive. For deterministic monitoring schedules, bounded continuous first-arrival responses, and later count-based decisions, Proposition~\ref{prop:feedbackactivation} gives a uniform $O(\varepsilon^2)$ cost expansion, both with the baseline final decision and after optimizing the final count-based decision. Under coercivity and nonbinding response bounds, Corollary~\ref{cor:feedbackthreshold} identifies the optimal first-order response and shows, within this policy family, that positive variance feedback raises the first-order activation fee threshold relative to zero feedback, while negative feedback lowers it.

The remainder of the paper is organized as follows. Section~\ref{sec:model} introduces the model and establishes well-posedness and stability. Section~\ref{sec:variation} develops the first-order variations and the physical-measure decomposition. Section~\ref{sec:smp} derives the adjoint equations and the necessary maximum principle. Section~\ref{sec:example} presents the quadratic example and the activation threshold comparison.

	\section{Model and well-posedness}\label{sec:model}
	\subsection{Preliminaries}
	Fix $T>0$. Under a reference probability $\QQ$, let $W$ be a
	$d_W$-dimensional Brownian motion, let $\bN$ be a Poisson random measure
	on a standard Borel mark space $E$ with compensator
	$\nu(\mathrm d\zeta)\mathrm dt$, and let $Y_1,\ldots,Y_n$ be Poisson
	processes with common intensity $c>0$. These noises are mutually
	independent under $\QQ$. Throughout the paper,
	$\FF=(\cF_t)_{0\leq t\leq T}$ is the usual $\QQ$-augmentation of their
	joint natural filtration, and we work on
	$(\Omega,\cF_T,\FF,\QQ)$. In particular, the adjoint equations use this
	same natural filtration. Let $\tilde N(\drm\zeta,\drm t) = \bar N(\drm\zeta,\drm t) - \nu(\drm\zeta)\drm t$ and $\tilde Y_k(t) = Y_k(t) - ct$ denote the corresponding compensated martingale noises, and let $\mathbb F^Y$ be the $\QQ$-augmented filtration generated by $Y=(Y_1,\cdots,Y_n)$. 
	
	Let $U\subset\R^m$ be nonempty, closed, and convex. The admissible controls
	are
	\[
	\Uad=\left\{u\in U: 
		u\text{ is }\FF^Y\text{-predictable}, 
		\displaystyle \EE\int_0^T|u_t|^r\dd t<\infty
		\text{ for every }r\geq1 \right\}.
	\]

We then define the following spaces. 
\begin{enumerate}[label=\arabic*.,leftmargin=*]
	\item For $1\leq j<\infty$, let $L^j(E,\nu;\R^q)$ be the space of
	$\mathcal B(E)$-measurable functions $h:E\to\R^q$, identified up
	to $\nu$-almost-everywhere equality, such that
	\[
	\|h\|_{L^j(E,\nu)}
	=\left(\int_E |h(\zeta)|^j\,\nu(\mathrm d\zeta)\right)^{1/j}
	<\infty.
	\]
We omit the target space when its dimension is clear and write  simply $L^j(\nu)$. For matrix-valued functions, $|\cdot|$ denotes the Frobenius norm.
	\item For $p\geq2$, let $\Sp^p$ be the space of $\FF$-adapted
	c\`adl\`ag processes $Z$, identified up to indistinguishability,
	such that $\EE\sup_{t\leq T}|Z_t|^p<\infty$. Its norm is given by
	\[
	\|Z\|_{\Sp^p}^p=\EE\sup_{t\leq T}|Z_t|^p.
	\]
	We also use $\Sp^\infty$ for $\FF$-adapted c\`adl\`ag processes
	with finite norm
	\[
	\|Z\|_{\Sp^\infty}
	=\left\|\sup_{t\leq T}|Z_t|\right\|_{L^\infty(\QQ)}.
	\]
	
	\item For $p\geq1$ and each fixed $t\in[0,T]$, we use the
	$\FF$-predictable subspace of
	$L^p([0,t]\times\Omega,\mathrm ds\otimes\mathrm d\QQ)$, with norm
	\[
	\|u\|_{p,t}^p=\EE\int_0^t|u_s|^p\dd s.
	\]
	
		\item For $r\geq1$, the $r$-Wasserstein distance on
	$\mathcal P_r(\R^d)$ is
	\[
	W_r(\mu,\widehat\mu)
	=\left(\inf_{\pi\in\Pi(\mu,\widehat\mu)}
	\int_{\R^d\times\R^d}|x-y|^r\pi(\mathrm dx,\mathrm dy)
	\right)^{1/r},
	\]
	where $\Pi(\mu,\widehat\mu)$ is the set of Borel probability
	measures on $\R^d\times\R^d$ with first marginal $\mu$ and second
	marginal $\widehat\mu$.
	
	\item For $p\geq1$, let $\mathcal P_p(\R^d)$ be the set of Borel
	probability measures on $\R^d$ with finite $p$th moment. For
	$\mu\in\mathcal P_p(\R^d)$, write
	\[
	m_p(\mu)=\int_{\R^d}|x|^p\mu(\mathrm dx).
	\]
	In particular,  for $\mu,\widehat\mu\in\mathcal P_1(\R^d)$, define
	\begin{equation}\label{eq:rho}
		\rho(\mu,\widehat\mu)
		=\sup_{\substack{\varphi:\R^d\to\R,\ |\varphi(0)|\leq1\\
				\Lip(\varphi)\leq1}}
		\left|\int_{\R^d}\varphi(x)(\mu-\widehat\mu)(\mathrm dx)\right|.
	\end{equation}
	By the Kantorovich--Rubinstein representation, $\rho=W_1$.

	\item For $p\geq1$, the space $C([0,T];\mathcal P_p(\R^d))$
	consists of $W_p$-continuous curves and is equipped with the uniform
	$W_p$ distance
	\[
	\sup_{t\leq T}W_p(\mu_t,\widehat\mu_t).
	\]
	
	\item For $I=[s_0,s_1]\subset[0,T]$, define $\Sp^2(I)$ by
	restricting the processes and the supremum in the definition of
	$\Sp^2$ to $I$. The spaces $\HW^2(I)$ and $\JY^2(I)$ consist of
	$\FF$-predictable processes, and $\JN^2(I)$ consists of
	$\Pred(\FF)\otimes\mathcal B(E)$-measurable kernels, with finite
	norms given by
	\begin{align*}
		\|Z\|_{\HW^2(I)}^2=\EE\int_I|Z_t|^2\dd t,\quad	\|R\|_{\JN^2(I)}^2=\EE\int_I\int_E|R_t(\zeta)|^2
		\nu(\mathrm d\zeta)\dd t,\quad	\|L\|_{\JY^2(I)}^2=\EE\int_I c\sum_{k=1}^n|L_{k,t}|^2\dd t.
	\end{align*}
 
	\item With the appropriate Euclidean dimensions, set
	\[
	\Bspace^2(I)
	=\Sp^2(I)\times\HW^2(I)\times\JN^2(I)\times\JY^2(I).
	\]
	The square of its norm is the sum of the squared component norms.
	We omit $I$ from these spaces when $I=[0,T]$.
\end{enumerate}
 
 	For $u\in\cU_{ad}$, we consider the following coupled system:
 \begin{equation}\label{eq:weightedlaw}
 	\left\{
 	\begin{aligned}
 		\drm X_t^u= &b(t,X_{t-}^u,\mu_t^u,u_t)\drm t+\sigma(t,X_{t-}^u,\mu_t^u,u_t)\drm W_t+\int_E\eta(t,X_{t-}^u,\mu_t^u,u_t,\zeta)\tilde{N}(\drm \zeta,\drm t),\\
 		\drm M_t^u= &M_{t-}^u\sum_{k=1}^n\left(\frac{\lambda_k(t,X_{t-}^u,\mu_t^u,u_t)}{c}-1\right)\drm \ti{Y}_k(t),\\
 		\langle \mu_t^u,\varphi\rangle= &\EE[M_t^u\varphi(X_t^u)],\quad
 		X_0^u=x_0\in\R^d,\quad M_0^u=1.
 	\end{aligned}
 	\right.
 \end{equation}
 The coefficients $b,\sigma,\eta$ take values respectively in $\R^d$, $\R^{d\times d_W}$, and $\R^d$. The intensities $\lambda_k$ are nonnegative.
 For brevity, write $\lambda_k^u(t)=\lambda_k(t,X_{t-}^u,\mu_t^u,u_t)$.
 
 Moreover, the cost functional reads
 \begin{equation}\label{eq:cost}
 	J(u)=\EE\left[M_T^u\Phi(X_T^u,\mu_T^u)+\int_0^T M_t^u f(t,X_{t-}^u,\mu_t^u,u_t)\drm t\right].
 \end{equation}
 The control problem consists of minimizing $J(u)$ over all admissible controls in $\cU_{ad}$.
 
 Now, we introduce the following assumptions.
	\begin{assumption}\label{ass:state}
		All coefficients are deterministic and jointly Borel measurable, where the measure argument is equipped with the
		corresponding Wasserstein Borel $\sigma$-field. For every finite $p\geq2$
		there is $C_p<\infty$ such that, for $z=(x,\mu,u)$ and
		$\widehat z=(\widehat x,\widehat\mu,\widehat u)$,
		\begin{align*}
			&|b(t,z)-b(t,\widehat z)|+|\sigma(t,z)-\sigma(t,\widehat z)|+\|\eta(t,z,\cdot)-\eta(t,\widehat z,\cdot)\|_{L^2(\nu)}
			+\|\eta(t,z,\cdot)-\eta(t,\widehat z,\cdot)\|_{L^p(\nu)}\\
			&\qquad\leq C_p\bigl(|x-\widehat x|+
			W_1(\mu,\widehat\mu)+|u-\widehat u|\bigr),\\
			&|b(t,z)|+|\sigma(t,z)|+\|\eta(t,z,\cdot)\|_{L^2(\nu)}
			+\|\eta(t,z,\cdot)\|_{L^p(\nu)}\leq C_p\bigl(1+|x|+m_1(\mu)+|u|\bigr).
		\end{align*}
		These bounds hold on $[0,T]\times\R^d\times\mathcal P_1(\R^d)\times U$.
	\end{assumption}
	
	\begin{assumption}\label{ass:intensity}
		The intensities are deterministic and jointly Borel measurable. There exist 
		$\lambda_*\in(0,\infty)$ and $L<\infty$ such that $
		0\leq\lambda_k(t,x,\mu,u)\leq\lambda_*,$ 
		and $
		|\lambda_k(t,x,\mu,u)-\lambda_k(t,\widehat x,\widehat\mu,\widehat u)|
		\leq L\bigl(|x-\widehat x|+W_1(\mu,\widehat\mu)+|u-\widehat u|\bigr).$ 
	\end{assumption}
	Throughout the paper, choose $c\geq\max\{1,\lambda_*\}$ and define
	$C_M=e^{ncT}$.  
	
\begin{assumption}\label{ass:regularity}
	The state coefficients and intensities are defined on
	$\mathcal P_1(\mathbb R^d)$, while
	$f:[0,T]\times\mathbb R^d\times\mathcal P_2(\mathbb R^d)\times U\to\mathbb R$ and
	$\Phi:\mathbb R^d\times\mathcal P_2(\mathbb R^d)\to\mathbb R$.
	All measure arguments appearing below belong to $\mathcal P_2(\mathbb R^d)$.
	
	Define $\mathcal C=\{b,\sigma,\lambda_1,\ldots,\lambda_n\}$, and $\mathcal D=\mathcal C\cup\{\eta,f,\Phi\}$.  Set $
	A_2(x,\mu,u)=1+|x|^2+m_2(\mu)+|u|^2$ and $ A_2^0(x,\mu)=1+|x|^2+m_2(\mu).$ 
	 
	\begin{enumerate}[label=(\roman*),leftmargin=*]
		\item Each $\varphi\in\mathcal D$ is continuously differentiable in $(x,u)$, with a $C^1$ extension in $u$ to an open neighbourhood of $U$. The terminal cost $\Phi$ is continuously differentiable with respect to $x$. There exists a linear functional derivative $\varphi_\mu$ such that, for all $\mu,\widehat\mu\in\mathcal P_2(\mathbb R^d)$,
		\begin{equation}\label{eq:flat}
			\varphi(t,x,\widehat\mu,u)-\varphi(t,x,\mu,u)
			=\int_0^1\int_{\mathbb R^d}
			\varphi_\mu(t,x,\mu_\theta,u;y)\,
			(\widehat\mu-\mu)(\mathrm dy)\,\mathrm d\theta,
		\end{equation}
		where $\mu_\theta=(1-\theta)\mu+\theta\widehat\mu$.
		We impose the normalisation $\varphi_\mu(t,x,\mu,u;0)=0$.
		The map $y\mapsto\varphi_\mu(t,x,\mu,u;y)$ is continuously differentiable, and we write $
		\varphi_{\mu,1}(t,x,\mu,u;y)
		=\partial_y\varphi_\mu(t,x,\mu,u;y).$
		
		\item For $\varphi\in\mathcal C$,
		\begin{equation}\label{eq:boundedderivatives}
			|\varphi_x|+|\varphi_u|
			+|\varphi_{\mu,1}(\cdot;y)|\leq C,
			\qquad
			|\varphi_\mu(\cdot;y)|\leq C(1+|y|).
		\end{equation}
		For every finite $p\geq2$ and $j\in\{2,p\}$,
		\begin{equation}\label{eq:etaderivatives}
				\|\eta_x\|_{L^j(\nu)}
				+\|\eta_u\|_{L^j(\nu)}
				+\sup_y\|\eta_{\mu,1}(\cdot;y)\|_{L^j(\nu)}
				\leq C_j,\qquad
				\|\eta_\mu(\cdot;y)\|_{L^j(\nu)}
				\leq C_j(1+|y|).
		\end{equation}
	Moreover,	the cost functions satisfy
		\begin{equation}\label{eq:costderivatives}
			\begin{aligned}
				|f|+|f_x|+|f_u|\leq &CA_2,&\qquad |f_\mu(\cdot;y)|+|f_{\mu,1}(\cdot;y)|
				\leq & CA_2(1+|y|^2),\\
				|\Phi|+|\Phi_x|
				\leq& CA_2^0,&\qquad |\Phi_\mu(\cdot;y)|+|\Phi_{\mu,1}(\cdot;y)|
				\leq & CA_2^0(1+|y|^2).
			\end{aligned}
		\end{equation}
		
		\item For every $R>0$, the derivatives in (i) are uniformly continuous in $(x,u,y,\mu)$ on the set
		\[
		\bigl\{(x,u,y,\mu):
		\mu\in\mathcal P_4(\mathbb R^d),\
		|x|+|u|+|y|+m_4(\mu)\leq R\bigr\}.
		\]
 
	\end{enumerate}
\end{assumption}
Here the measure variable is equipped with the \(W_2\) metric, and the remaining variables carry their standard Euclidean distances.
For \(\eta\), differentiability in \((x,u)\) and differentiability of \(\eta_\mu\) in its evaluation variable \(y\) in (i) refer to Fréchet differentiability of \(B_p\)-valued maps, where \(B_p=L^2(\nu)\cap L^p(\nu)\).

Our moment assumptions are chosen to yield estimates for every finite \(p\), to accommodate the polynomial-growth test classes used below, and to preserve admissibility when the dominating reference rate is changed.
We do not seek minimal moment conditions.
Furthermore, under the above assumptions, any argument absent from a coefficient is omitted for notational simplicity.

	\subsection{Well-posedness of the state equation}
	We first establish a deterministic likelihood bound. Its validity at zero
	intensity is the basic estimate used in the weighted-law construction.
	
	\begin{lemma}\label{lem:boundedlikelihood}
		For predictable intensities $0\leq\lambda_k\leq\lambda_*\leq c$, the
		solution of the likelihood equation in \eqref{eq:weightedlaw} is a
		nonnegative uniformly integrable $\QQ$-martingale, and
		\begin{equation}\label{eq:product}
			M_t=\exp\left\{\sum_{k=1}^n\int_0^t(c-\lambda_k(s))\dd s\right\}
			\prod_{k=1}^n\prod_{\substack{0<s\leq t\\\Delta Y_k(s)=1}}
			\frac{\lambda_k(s)}c.
		\end{equation}
		In particular,
		\begin{equation}\label{eq:pathbound}
			0\leq M_t\leq e^{nct}\leq C_M,
			\qquad \EE M_t=1.
		\end{equation}
	\end{lemma}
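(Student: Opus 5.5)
We first identify the solution of the linear likelihood equation in \eqref{eq:weightedlaw} with the product \eqref{eq:product}. Since the $Y_k$ are independent Poisson processes under $\QQ$, they have no common jumps almost surely. The equation $\drm M_t=M_{t-}\sum_k(\lambda_k(t)/c-1)\,\drm\Yt_k(t)$ is of Doléans-Dade type, $M=\mathcal E(Z)$ with $Z_t=\sum_k\int_0^t(\lambda_k(s)/c-1)\,\drm\Yt_k(s)$. The predictable integrand is bounded by $1$ in absolute value, so $Z$ is a well-defined finite-variation semimartingale with pathwise finitely many jumps on $[0,T]$. The stochastic exponential formula for finite-variation $Z$ without continuous martingale part gives
\[
\mathcal E(Z)_t=e^{Z^c_t}\prod_{s\le t}(1+\Delta Z_s),
\]
where $Z^c_t=\sum_k\int_0^t(c-\lambda_k(s))/c\cdot c\,\drm s=\sum_k\int_0^t(c-\lambda_k(s))\,\drm s$ is the continuous (drift) part of $Z$. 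At a jump of $Y_k$ we have $1+\Delta Z_s=\lambda_k(s)/c$. This is exactly \eqref{eq:product}. Uniqueness follows from the standard uniqueness of solutions of linear SDEs driven by semimartingales, or directly pathwise: between jumps the equation is a linear ODE, and at jumps it is multiplicative.

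The bound \eqref{eq:pathbound} is then immediate. Each factor $\lambda_k(s)/c$ lies in $[0,1]$ because $0\le\lambda_k\le\lambda_*\le c$, and the exponent satisfies $0\le\sum_k\int_0^t(c-\lambda_k)\,\drm s\le nct$. Hence $0\le M_t\le e^{nct}\le e^{ncT}=C_M$ pathwise, for all $t$ simultaneously. Nonnegativity holds even when some $\lambda_k(s)=0$ at a jump: then $M$ is absorbed at $0$, which is consistent with the equation.

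For the martingale property, $M$ is a stochastic integral $\int M_{s-}H_s\,\drm\Yt$ with predictable integrand bounded by $C_M$. The integral against the compensated Poisson martingale $\Yt_k$ of a bounded predictable process is a true martingale, since $\EE\int_0^T|M_{s-}H_{k,s}|^2c\,\drm s<\infty$, so it is an $L^2$ martingale. Being bounded by $C_M$, it is uniformly integrable, and $\EE M_t=M_0=1$.

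The only step needing care is the first one: confirming that the exponential formula applies with vanishing factors, without any logarithmic representation. We avoid logarithms entirely. We verify \eqref{eq:product} directly, by checking via Itô's formula for finite-variation processes (or by induction over the successive jump times of $\sum_kY_k$) that the right-hand side solves the equation, and then invoking uniqueness. This sidesteps inverse intensities, which is precisely the point of the lemma.
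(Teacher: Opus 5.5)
Your proposal is correct and follows essentially the same route as the paper. The paper also solves the equation pathwise, as the linear ODE $\drm M_t=M_t\sum_k(c-\lambda_k(t))\,\drm t$ between jumps together with the update $M_t=M_{t-}\lambda_k(t)/c$ at a jump of $Y_k$. It then reads off $0\le M_t\le e^{nct}$ from $0\le\lambda_k/c\le1$, and obtains the martingale property and $\EE M_t=1$ from $\EE\int_0^T\sum_k cM_{t-}^2(\lambda_k/c-1)^2\,\drm t\le ncC_M^2T$.

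Your Dol\'eans-Dade framing only repackages that iteration over jump times. One notational point: what you call $Z^c$ is the continuous finite-variation part $Z_t-\sum_{s\le t}\Delta Z_s$. It is not the continuous martingale part, which vanishes here.
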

	
	\begin{proof}
		Between successive jumps of $Y$, we have
		\[
		\drm M_t
		=M_t\sum_{k=1}^n(c-\lambda_k(t))\,\drm t,
		\]
		while at a jump time $t$ of $Y_k$, $
		M_t=M_{t-}\frac{\lambda_k(t)}{c}.$ 
		
		Since the observation processes have finitely many jumps on $[0,T]$ and no simultaneous jumps, then by iterating, we can derive   \eqref{eq:product}. Moreover, since 	$0\le\lambda_k/c\le1$ and $0\le c-\lambda_k\le c$, then  \eqref{eq:pathbound} holds.

		Furthermore, $$\EE\int_0^T\sum_{k=1}^n cM_{t-}^2\left(\frac{\lambda_k(t)}{c}-1\right)^2	\,\drm t
		\le ncC_M^2T<\infty.$$	Thus the stochastic integrals of $M$ in \eqref{eq:weightedlaw} are square-integrable martingales, and $\EE[M_t]=M_0=1$.  
	\end{proof}
	
	We next record the estimates used in the forward and variational
	constructions. Define $\|a_t\|_j=\|a_t\|_{L^j(\nu)}$.
	
	\begin{lemma}\label{lem:maximal}
		For $p\geq2$ and a predictable kernel $a$,
		\begin{equation}\label{eq:poissonmaximal}
			\EE\sup_{s\leq t}\left|\int_0^s\int_E a_r(\zeta)
			\Nt(\mathrm d\zeta,\mathrm dr)\right|^p
			\leq C_p\left\{\EE\left(\int_0^t\|a_r\|_2^2\dd r\right)^{p/2}
			+\EE\int_0^t\|a_r\|_p^p\dd r\right\}.
		\end{equation}
		For a Brownian integrand $B$,
		\begin{equation}\label{eq:brownianmaximal}
			\EE\sup_{s\leq t}\left|\int_0^s B_r\dd W_r\right|^p
			\leq C_p\EE\left(\int_0^t|B_r|^2\dd r\right)^{p/2}.
		\end{equation}
		Therefore, if $ 
		Z_t=Z_0+\int_0^t b_r\dd r+\int_0^t B_r\dd W_r
		+\int_0^t\int_E a_r(\zeta)\Nt(\mathrm d\zeta,\mathrm dr)
		+\sum_{k=1}^n\int_0^t l_{k,r}\dd\Yt_k(r),$ 
		then
		\begin{equation}\label{eq:combinedmaximal}
			\EE\sup_{s\leq t}|Z_s|^p
			\leq C_{p,T,c,n}\left\{\EE|Z_0|^p+
			\EE\int_0^t\bigl(|b_r|^p+|B_r|^p+
			\|a_r\|_2^p+\|a_r\|_p^p+|l_r|^p\bigr)\dd r\right\}.
		\end{equation}
	 
	\end{lemma}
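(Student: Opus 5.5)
The plan is to prove the three displays of Lemma~\ref{lem:maximal} in order, quoting standard martingale inequalities and then combining them through the elementary inequality $|a_1+\dots+a_m|^p\le m^{p-1}\sum|a_i|^p$.

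\emph{Poisson estimate \eqref{eq:poissonmaximal}.} Let $I_s=\int_0^s\int_E a_r(\zeta)\Nt(\mathrm d\zeta,\mathrm dr)$. This is a purely discontinuous local martingale with $[I]_t=\int_0^t\int_E|a_r(\zeta)|^2\bN(\mathrm d\zeta,\mathrm dr)$. The Burkholder--Davis--Gundy inequality gives $\EE\sup_{s\le t}|I_s|^p\le C_p\EE[I]_t^{p/2}$. To pass from $[I]$ to the predictable quantities, I would use the Bichteler--Jacod (Kunita) inequality for Poisson stochastic integrals. The cleanest self-contained route is induction on dyadic $p$. For $2\le p\le 4$, write
\[
[I]_t=\int_0^t\|a_r\|_2^2\dd r+\int_0^t\int_E|a_r|^2\Nt(\mathrm d\zeta,\mathrm dr).
\]
Since $p/2\le 2$, the compensated term is controlled by
\[
\EE\int_0^t\int_E|a|^p\,\nu\,\mathrm dr
\]
via BDG with exponent $p/2\le 2$ together with the subadditivity bound $(\sum x_i^2)^{p/4}\le\sum x_i^{p/2}$. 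For larger $p$, apply the same decomposition and the inductive hypothesis at exponent $p/2$. The intermediate terms $\|a\|_{2q}^{2q}$ that appear are then interpolated between $\|a\|_2$ and $\|a\|_p$ using Hölder and Young.

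Two technical points need care. One is a localization argument, needed to make the BDG step rigorous when the right-hand side is finite. The other is keeping the constants depending only on $p$. I expect this step to be the main obstacle. If it becomes too long, I would simply cite Kunita's inequality (Kunita 2004, Theorem 2.11), which gives exactly \eqref{eq:poissonmaximal}.

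\emph{Brownian estimate \eqref{eq:brownianmaximal}.} This is the BDG inequality for continuous martingales, applied componentwise with localization.

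\emph{Combined estimate \eqref{eq:combinedmaximal}.} Split $Z$ into five pieces.

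\begin{enumerate}
\item \emph{Initial value.} This gives the term $\EE|Z_0|^p$ directly.
\item \emph{Drift.} Hölder gives $\sup_s|\int_0^s b\dd r|^p\le t^{p-1}\int_0^t|b|^p\dd r$.
\item \emph{Brownian part.} Use \eqref{eq:brownianmaximal}, followed by Jensen/Hölder in time: $\bigl(\int_0^t|B|^2\bigr)^{p/2}\le t^{p/2-1}\int_0^t|B|^p$.
\item \emph{Poisson part.} Use \eqref{eq:poissonmaximal} together with $\bigl(\int_0^t\|a\|_2^2\bigr)^{p/2}\le t^{p/2-1}\int_0^t\|a\|_2^p$.
\item \emph{Counting parts.} $\Yt_k$ is a compensated Poisson process with rate $c$, so $\int_0^s l_k\dd\Yt_k$ is a Poisson-random-measure integral over a one-point mark space with intensity $c$. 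Applying \eqref{eq:poissonmaximal} with $\|l\|_2^2=c|l|^2$ and $\|l\|_p^p=c|l|^p$, and then Hölder in time again, gives a bound $C_{p,T,c}\EE\int_0^t|l_r|^p\dd r$ for each $k$.
\end{enumerate}

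Summing over the $n+4$ pieces yields \eqref{eq:combinedmaximal}, with a constant depending only on $p,T,c,n$.
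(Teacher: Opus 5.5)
Your proof is correct. The Brownian and combined estimates are handled exactly as in the paper: BDG, H\"older in time, and the counting integrals treated as Poisson integrals over a one-point mark space with $\|l\|_2^2=c|l|^2$, $\|l\|_p^p=c|l|^p$. For \eqref{eq:poissonmaximal}, however, you take a different route.

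\textbf{The paper's route.} It uses neither BDG nor induction. For a bounded elementary integrand it applies It\^o's formula to $|L_t|^p$. It bounds the jump term by the Taylor estimate $0\le|x+y|^p-|x|^p-p|x|^{p-2}x\cdot y\le C_p(|x|^{p-2}|y|^2+|y|^p)$, which gives $\EE|L_t|^p\le C_p\EE\int_0^t(|L_{s-}|^{p-2}\|a_s\|_2^2+\|a_s\|_p^p)\dd s$. Doob's inequality and Young's inequality with a small parameter then absorb $\EE(L_t^*)^p$, and general integrands follow by approximation. This treats all $p\ge2$ in one step. The absorption, however, requires $\EE(L_t^*)^p<\infty$ a priori, which is why the paper starts from bounded elementary integrands.

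\textbf{Your route.} The Bichteler--Jacod/Kunita argument combines BDG, the splitting $[I]_t=\int_0^t\|a_r\|_2^2\dd r+\int_0^t\int_E|a_r(\zeta)|^2\Nt(\mathrm d\zeta,\mathrm dr)$, and dyadic induction. It needs only localization and no absorption. The price is the induction and the interpolation of the intermediate term. That interpolation does close for $p>4$. Pointwise, $\|a\|_4^4\le\|a\|_2^{2\vartheta}\|a\|_p^{p(1-\vartheta)}$ with $\vartheta=(p-4)/(p-2)$. H\"older in time and Young then give $\bigl(\int_0^t\|a_r\|_4^4\dd r\bigr)^{p/4}\le\gamma\bigl(\int_0^t\|a_r\|_2^2\dd r\bigr)^{p/2}+(1-\gamma)\int_0^t\|a_r\|_p^p\dd r$ with $\gamma=(p-4)/(2(p-2))$.

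If you want a short self-contained proof rather than citing Kunita, the paper's It\^o-formula argument is the more economical choice. It replaces the step you flagged as the main obstacle with a single computation.
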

	\begin{proof}
	We first consider a bounded elementary integrand $a$ supported on a set of finite $\dd t\otimes\nu$ measure.
	Set $L_t=\int_0^t\int_Ea_s(\zeta)\Nt(d\zeta,ds)$, and $ L_t^*=\sup_{s\le t}|L_s|.$ 
	
	Since for $p\ge2$, by Taylor's formula, $0\le |x+y|^p-|x|^p-p|x|^{p-2}x\cdot y\le C_p\bigl(|x|^{p-2}|y|^2+|y|^p\bigr).$ 
	Then, by applying It\^o's formula to $|L_t|^p$ and taking expectations, we obtain
	\[
	\begin{aligned}
		\EE|L_t|^p&=\EE\int_0^t\int_E\bigl[|L_{s-}+a_s(\zeta)|^p-|L_{s-}|^p-p|L_{s-}|^{p-2}L_{s-}\cdot a_s(\zeta)\bigr]\nu(d\zeta)ds\\
		&\le C_p\EE\int_0^t\left(|L_{s-}|^{p-2}\|a_s\|_2^2+\|a_s\|_p^p	\right)ds.
	\end{aligned}
	\]
	Therefore, by Doob's inequality, $
		\EE(L_t^*)^p\le C_p\EE\left[	(L_t^*)^{p-2}	\int_0^t\|a_s\|_2^2ds	\right]	+C_p\EE\int_0^t\|a_s\|_p^pds.$
		
	For $p>2$, set $q = \frac{p}{p-2}$ and $q'=\frac{p}{2}$. Then,   by using Young's inequality, we have 
	\begin{align}
		(L_t^*)^{p-2}\int_0^t \|a_s\|^2_2\drm s \le
		\frac1q \varepsilon \cdot (L_t^*)^{p}+\frac1{q'} \varepsilon^{-q'/q} \left(\int_0^t \|a_s\|^2_2\drm s\right)^{p/2}.
	\end{align}
	Choosing $\varepsilon>0$ sufficiently small,	we obtain
	\[\EE(L_t^*)^p	\le C_p\left[	\EE\left(\int_0^t\|a_s\|_2^2ds\right)^{p/2}	+\EE\int_0^t\|a_s\|_p^pds	\right].
	\]
	Thus, \eqref{eq:poissonmaximal} holds for $p>2$. For $p=2$, it follows directly from Doob's inequality and It\^o's isometry. Approximating a predictable integrand with finite right-hand side by bounded elementary integrands proves \eqref{eq:poissonmaximal} in the stated generality.
	
	For the Brownian integral, by using the	Burkholder--Davis--Gundy inequality,
	\[\EE\sup_{s\le t}\left|\int_0^s B_r\,dW_r\right|^p\le C_p\EE\left[\left\langle\int_0^\cdot B_r\,dW_r\right\rangle_t^{p/2}	\right]=C_p\EE
	\left(\int_0^t|B_r|^2dr\right)^{p/2},
	\]
	which proves \eqref{eq:brownianmaximal}.
	
	Next, applying \eqref{eq:poissonmaximal} to the compensated counting processes, we obtain
	\[\EE\sup_{s\le t}	\left|\sum_{k=1}^n	\int_0^s l_{k,r}\,d\Yt_k(r)\right|^p\le C_p\bigg[	c^{p/2}\EE	\left(\int_0^t|l_r|^2dr\right)^{p/2}	+c\EE\int_0^t	\sum_{k=1}^n|l_{k,r}|^pdr	\bigg]\le C_{p,T,c,n}\EE\int_0^t|l_r|^pdr.
	\]
	
	Finally, since for every nonnegative measurable $g$,	by using H\"older's inequality, we have  $\left(\int_0^t g_r\,dr\right)^p\le t^{p-1}\int_0^t g_r^p\,dr$, and $	\left(\int_0^t g_r^2\,dr\right)^{p/2}\le t^{p/2-1}\int_0^t g_r^p\,dr$. 	Combining these inequalities with \eqref{eq:poissonmaximal} and \eqref{eq:brownianmaximal} proves \eqref{eq:combinedmaximal}.
\end{proof}
	
The following estimate bounds the weighted marginal in terms of the state and likelihood. It is used both in the forward stability argument and in the proof of convergence for difference quotients.

	\begin{lemma}\label{lem:distance}
		Suppose $\mu_t=\EE[M_t\delta_{X_t}]$ and
		$\widehat\mu_t=\EE[\widehat M_t\delta_{\widehat X_t}]$ are probability
		measures, $0\leq M,\widehat M\leq C_M$, and
		$\EE\sup_{t\leq T}|X_t|^2\leq R$. Then
		\begin{equation}\label{eq:weighteddistance}
			\rho(\mu_t,\widehat\mu_t)
			\leq\|M_t-\widehat M_t\|_{L^2}\|1+|X_t|\|_{L^2}
			+C_M\|X_t-\widehat X_t\|_{L^2}.
		\end{equation}
		For every $p\geq2$,
		\begin{equation}\label{eq:weighteddistancep}
			\sup_{s\leq t}\rho(\mu_s,\widehat\mu_s)^p
			\leq C_{p,R}\left(\EE\sup_{s\leq t}|M_s-\widehat M_s|^p
			+\EE\sup_{s\leq t}|X_s-\widehat X_s|^p\right).
		\end{equation}
	\end{lemma}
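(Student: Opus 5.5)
We bound the integral of a test function $\varphi$ from the class in \eqref{eq:rho} directly, splitting the weighted difference into a likelihood part and a state part. Since $\rho=W_1$ by Kantorovich--Rubinstein, bounding $|\langle\mu_t-\widehat\mu_t,\varphi\rangle|$ uniformly over $\varphi$ with $|\varphi(0)|\le1$ and $\Lip(\varphi)\le1$ suffices. By definition,
\[
\langle\mu_t-\widehat\mu_t,\varphi\rangle
=\EE\bigl[(M_t-\widehat M_t)\varphi(X_t)\bigr]
+\EE\bigl[\widehat M_t\bigl(\varphi(X_t)-\varphi(\widehat X_t)\bigr)\bigr].
\]
For the first term we use $|\varphi(x)|\le|\varphi(0)|+|x|\le 1+|x|$ and Cauchy--Schwarz, which gives the bound $\|M_t-\widehat M_t\|_{L^2}\|1+|X_t|\|_{L^2}$. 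For the second term we use $0\le\widehat M_t\le C_M$ and $|\varphi(X_t)-\varphi(\widehat X_t)|\le|X_t-\widehat X_t|$, which gives $C_M\EE|X_t-\widehat X_t|\le C_M\|X_t-\widehat X_t\|_{L^2}$. Taking the supremum over $\varphi$ yields \eqref{eq:weighteddistance}. Integrability of $\varphi(X_t)$ under the weights is guaranteed by $\EE\sup|X_t|^2\le R$ together with the boundedness of $M$ and $\widehat M$.

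For \eqref{eq:weighteddistancep}, fix $s\le t$ and raise \eqref{eq:weighteddistance} at time $s$ to the power $p$, using $(a+b)^p\le 2^{p-1}(a^p+b^p)$. Since $\|1+|X_s|\|_{L^2}^2\le 2(1+R)$, we get
\[
\rho(\mu_s,\widehat\mu_s)^p\le C_{p,R}\bigl(\|M_s-\widehat M_s\|_{L^2}^p+\|X_s-\widehat X_s\|_{L^2}^p\bigr).
\]
Because $p\ge2$, Jensen's (Lyapunov's) inequality gives $\|Z\|_{L^2}^p\le\EE|Z|^p$. The bound $|Z_s|\le\sup_{r\le t}|Z_r|$ then lets us replace each term by the expectation of the running supremum. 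The right-hand side no longer depends on $s$, so we take $\sup_{s\le t}$ on the left to conclude.

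No step is a real obstacle. The only point requiring care is that the constant depends on $R$ only through $\sup_s\|1+|X_s|\|_{L^2}$, which is controlled uniformly in $s$ by the assumption on $\EE\sup_{t\le T}|X_t|^2$, and on $C_M$, which is fixed.
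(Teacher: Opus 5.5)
Your proof is correct and follows the paper's argument essentially step for step: you split into a likelihood term, handled by Cauchy--Schwarz with $|\varphi(x)|\le 1+|x|$, and a state term, handled by $\widehat M_t\le C_M$ and the Lipschitz bound; you then replace the time-$s$ $L^2$ norms by $L^p$ norms of running suprema. The only difference is cosmetic: you raise to the $p$th power before applying Lyapunov's inequality, whereas the paper first bounds $\sup_{s\le t}\rho(\mu_s,\widehat\mu_s)$ by $(1+R^{1/2})$ and $C_M$ times the $L^p$ norms of the suprema, and only then raises to the power $p$.
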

	\begin{proof}
	For any test function $\varphi$ satisfying $|\varphi(0)|\le 1$ and $\text{Lip}(\varphi)\le 1$, we have $|\varphi(x)|\le 1+|x|$ and
	$|\varphi(x)-\varphi(y)|\le |x-y|$. Hence,
	\begin{align*}
		|\inner{\mu_t-\widehat\mu_t}{\varphi}|&=\left|\EE[(M_t-\widehat M_t)\varphi(X_t)]+\EE[\widehat M_t(\varphi(X_t)
		-\varphi(\widehat X_t))]\right|\\
		&\le \EE[|M_t-\widehat M_t|(1+|X_t|)]
		+\EE[\widehat M_t|X_t-\widehat X_t|]\\
		&\le \norm{M_t-\widehat M_t}_{L^2}
		\norm{1+|X_t|}_{L^2}
		+C_M\norm{X_t-\widehat X_t}_{L^2}.
	\end{align*}
	Thus, based on \eqref{eq:rho}, by taking the supremum over all such $\varphi$, we obtain \eqref{eq:weighteddistance}.
	
	We proceed to proving \eqref{eq:weighteddistancep}. Since $\EE\sup_{t\le T}|X_t|^2\le R$, $\sup_{s\le T}\norm{1+|X_s|}_{L^2}\le 1+R^{1/2}$. Moreover, since for any $s\le t$,   $|M_s-\widehat M_s|
	\le \sup_{0\le r\le t}|M_r-\widehat M_r|$ $\QQ$-a.s.. Then  by using H\"{o}lder's inequality, for any $p\ge 2$,
	\[
		\|M_s-\widehat M_s\|_{L^2}=\left(\EE|M_s-\widehat M_s|^2\right)^{1/2}\le\left(\EE\sup_{r\le t}|M_r-\widehat M_r|^2\right)^{1/2}\le\left(\EE\sup_{r\le t}|M_r-\widehat M_r|^p\right)^{1/p}.
	\]
	Similarly,  $\norm{X_s-\widehat X_s}_{L^2}\le\left(\EE\sup_{r\le t}|X_r-\widehat X_r|^p\right)^{1/p}$. 
	
	Therefore, 
	\begin{align*}
		\sup_{s\le t}\rho(\mu_s,\widehat\mu_s)\le (1+R^{1/2})\left(\EE\sup_{s\le t}|M_s-\widehat M_s|^p\right)^{1/p}+C_M
		\left(\EE\sup_{s\le t}|X_s-\widehat X_s|^p\right)^{1/p}.
	\end{align*}
	Raising both sides to the power $p$ and applying the estimate $(a+b)^p\le 2^{p-1}(a^p+b^p)$ yields \eqref{eq:weighteddistancep}.
\end{proof}
		Recall that $m_p(\mu)=\int |x|^p \mu(\drm x)$.
	\begin{lemma}\label{lem:wasserstein}
		For $1<r<q<\infty$ and $\mu,\widehat\mu\in\mathcal P_q(\R^d)$,
		\begin{equation}\label{eq:wasserstein}
			W_r(\mu,\widehat\mu)^r
			\leq\rho(\mu,\widehat\mu)^{(q-r)/(q-1)}
			\bigl[2^{q-1}\{m_q(\mu)+m_q(\widehat\mu)\}\bigr]^{(r-1)/(q-1)}.
		\end{equation}
		Thus, if the flows $\mu_t^j,\mu_t$ have uniformly bounded $q$th moments
		and $\sup_{t\leq T}\rho(\mu_t^j,\mu_t)\to0$, then
		$\sup_{t\leq T}W_r(\mu_t^j,\mu_t)\to0$ for every $1\leq r<q$.
	\end{lemma}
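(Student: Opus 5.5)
We prove \eqref{eq:wasserstein} by interpolating between $W_1$ and a high moment along an optimal $W_1$ coupling. Since $\rho=W_1$ and both measures have finite first moment, there is a coupling $\pi\in\Pi(\mu,\widehat\mu)$ attaining $W_1$, so
\[
\int|x-y|\,\pi(\mathrm dx,\mathrm dy)=\rho(\mu,\widehat\mu).
\]
This coupling is also admissible in the definition of $W_r$, which gives $W_r(\mu,\widehat\mu)^r\le\int|x-y|^r\,\mathrm d\pi$. It therefore suffices to bound $\int|x-y|^r\,\mathrm d\pi$ by an interpolation between the first and $q$th moments of $|x-y|$ under $\pi$.

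To interpolate, write $r=\theta\cdot1+(1-\theta)q$, which gives
\[
\theta=\frac{q-r}{q-1},\qquad 1-\theta=\frac{r-1}{q-1}.
\]
Apply H\"older's inequality to $|x-y|^r=|x-y|^{\theta}\,|x-y|^{(1-\theta)q}$ with exponents $1/\theta$ and $1/(1-\theta)$. This yields
\[
\int|x-y|^r\,\mathrm d\pi\le\Bigl(\int|x-y|\,\mathrm d\pi\Bigr)^{\theta}\Bigl(\int|x-y|^q\,\mathrm d\pi\Bigr)^{1-\theta}.
\]
The first factor equals $\rho(\mu,\widehat\mu)^{(q-r)/(q-1)}$. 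For the second factor, convexity gives $|x-y|^q\le2^{q-1}(|x|^q+|y|^q)$, and integrating against the marginals gives $\int|x-y|^q\,\mathrm d\pi\le2^{q-1}\{m_q(\mu)+m_q(\widehat\mu)\}$. Combining the two bounds gives exactly \eqref{eq:wasserstein}. The only point needing care is the existence of an optimal coupling; standard optimal transport supplies it, since the cost $|x-y|$ is lower semicontinuous and $\Pi(\mu,\widehat\mu)$ is tight. Alternatively, one can take an $\epsilon$-optimal coupling and let $\epsilon\to0$, so existence is not really needed.

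For the consequence about flows, let $K$ bound $m_q(\mu_t^j)$ and $m_q(\mu_t)$ uniformly in $j$ and $t$.

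\begin{itemize}
\item For $1<r<q$, \eqref{eq:wasserstein} gives
\[
\sup_tW_r(\mu_t^j,\mu_t)^r\le(2^qK)^{(r-1)/(q-1)}\Bigl(\sup_t\rho(\mu_t^j,\mu_t)\Bigr)^{(q-r)/(q-1)}.
\]
The exponent $(q-r)/(q-1)$ is positive because $r<q$, so the right side tends to $0$.
\item For $r=1$, the claim is immediate from $W_1=\rho$.
\end{itemize}

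No step is a real obstacle; the main bookkeeping is matching the H\"older exponents to the stated powers.
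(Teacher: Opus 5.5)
Your proposal is correct and matches the paper's proof step for step: an optimal $W_1$ coupling, H\"older interpolation with exponents $(q-r)/(q-1)$ and $(r-1)/(q-1)$, the convexity bound $|x-y|^q\le 2^{q-1}(|x|^q+|y|^q)$, and $\rho=W_1$ for the case $r=1$ of the flow statement. Your remark that an $\epsilon$-optimal coupling suffices is a small refinement, since it removes the reliance on the existence of an optimal transport plan.
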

	\begin{proof}
	Let $\pi\in\Pi(\mu,\widehat\mu)$ be an optimal coupling for $W_1(\mu,\widehat\mu)= \rho(\mu,\widehat\mu)$. Define  $
	\alpha=\frac{q-r}{q-1}$, and $ \beta=\frac{r-1}{q-1}$.  Then $\alpha+\beta=1$ and $r=\alpha+q\beta$. By H\"older's inequality,
	\begin{align*}
		W_r(\mu,\widehat\mu)^r	 \le\int_{\R^d\times\R^d}
		|x-y|^r\,\pi(\drm x,\drm y) \le	\left(\int_{\R^d\times\R^d}	|x-y|\,\pi(\drm x,\drm y)\right)^\alpha\cdot	\left(\int_{\R^d\times\R^d}	|x-y|^q\,\pi(\drm x,\drm y)\right)^\beta.
	\end{align*}
	Moreover, by using	$|x-y|^q\le 2^{q-1}(|x|^q+|y|^q)$,	we have
	\[ \int_{\R^d\times\R^d}	|x-y|^q\,\pi(\drm x,\drm y)\le	2^{q-1}\int_{\R^d\times\R^d}	(|x|^q+|y|^q)\,\pi(\drm x,\drm y)=
	2^{q-1}\bigl(m_q(\mu)+m_q(\widehat\mu)\bigr). 
	\]
	Since $\int_{\R^d\times\R^d}|x-y|\,\pi(\drm x,\drm y)=\rho(\mu,\widehat\mu)$, we obtain \eqref{eq:wasserstein}.
	
	The last claim follows from \eqref{eq:wasserstein} and the stated uniform $q$-th moment bound. For $r=1$, it is exactly the assumed convergence in $\rho=W_1$.
\end{proof}
	
	Fix $u\in\Uad$. For any$\vartheta\in C([0,T];\mathcal P_1(\R^d))$, let$(X^\vartheta,M^\vartheta)$ solve the first two equations of
	\eqref{eq:weightedlaw} with $\mu^u$ replaced by $\vartheta$. Define $\Gamma(\vartheta)$ by
	\[
	\langle\Gamma(\vartheta)_t,\phi\rangle
	=\EE[M_t^\vartheta\phi(X_t^\vartheta)], 
	\]
	with $\phi\in B_b(\R^d).$ The dependence on the fixed control $u$ is suppressed for notational simplicity.

	\begin{lemma}\label{lem:decoupled}
		Under Assumptions~\ref{ass:state}--\ref{ass:intensity}, fix $u\in\Uad$
		and $\vartheta\in C([0,T];\mathcal P_1(\R^d))$. The decoupled equations
		have a unique strong solution $(X^\vartheta,M^\vartheta)$, and, for every
		finite $p\geq2$,
		\begin{equation}\label{eq:decoupledmoment}
			\EE\sup_{s\leq t}|X_s^\vartheta|^p
			\leq C_p\left(1+|x_0|^p+\|u\|_{p,t}^p+
			\int_0^tm_1(\vartheta_s)^p\dd s\right),
		\quad \text{ and }0\leq M_t^\vartheta\leq C_M.
		\end{equation}
		If $\vartheta,\widehat\vartheta$ have uniformly bounded second moments,
		then, for the same control and every $p\geq2$,
		\begin{equation}\label{eq:decoupleddifference} 
			\EE\sup_{s\leq t}|X_s^\vartheta-X_s^{\widehat\vartheta}|^p
				+\EE\sup_{s\leq t}|M_s^\vartheta-M_s^{\widehat\vartheta}|^p
				+\sup_{s\leq t}\rho(\Gamma(\vartheta)_s,
				\Gamma(\widehat\vartheta)_s)^p\leq C_{p,R}\int_0^t
				\sup_{r\leq s}\rho(\vartheta_r,\widehat\vartheta_r)^p\dd s. 
		\end{equation}
		where $R$ includes bounds on the input second moments and the control's second moment. 
	
	Furthermore, $\Gamma(\vartheta)$ belongs to $C([0,T];\mathcal P_1(\R^d))$.
	\end{lemma}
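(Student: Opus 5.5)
Since $\vartheta$ is fixed, the $X$-equation in the decoupled system is an ordinary (not McKean--Vlasov) Lévy-driven SDE whose coefficients $x\mapsto b(t,x,\vartheta_t,u_t)$, etc., are Lipschitz in $x$ uniformly in $(t,\omega)$ by Assumption~\ref{ass:state}. Its growth is bounded by $C_p(1+|x|+m_1(\vartheta_t)+|u_t|)$, and $m_1(\vartheta_t)$ is bounded on $[0,T]$ by continuity of $\vartheta$ in $W_1$.

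\textbf{Existence, uniqueness and the moment bound.} I will construct $X^\vartheta$ by Picard iteration in $\Sp^p$, or by the standard contraction argument on a weighted-in-time norm. The differences of iterates are estimated with \eqref{eq:combinedmaximal} and Gronwall. Once $X^\vartheta$ is known, the likelihood equation is linear in $M$ with predictable coefficient $\lambda_k(t,X_{t-}^\vartheta,\vartheta_t,u_t)\in[0,\lambda_*]$. Lemma~\ref{lem:boundedlikelihood} then gives the explicit solution \eqref{eq:product} and the bound $0\le M^\vartheta\le C_M$. Uniqueness for $M$ follows because the difference of two solutions solves a linear equation with bounded coefficients, so Gronwall on $\EE\sup|\cdot|^2$ forces it to vanish. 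The moment bound \eqref{eq:decoupledmoment} comes from applying \eqref{eq:combinedmaximal} to $X^\vartheta$, inserting the growth bound, and using Gronwall. (A localisation by stopping times is needed first to guarantee finiteness.)

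\textbf{Difference estimate.} Write $\Delta X=X^\vartheta-X^{\widehat\vartheta}$ and $\Delta M=M^\vartheta-M^{\widehat\vartheta}$. By the Lipschitz bounds, each coefficient difference is at most $C(|\Delta X_{s-}|+\rho(\vartheta_s,\widehat\vartheta_s))$, where for $\eta$ this holds in both $L^2(\nu)$ and $L^p(\nu)$. Then \eqref{eq:combinedmaximal} and Gronwall give
\[
\EE\sup_{s\le t}|\Delta X_s|^p\le C\int_0^t\rho(\vartheta_s,\widehat\vartheta_s)^p\ds .
\]
For $M$ I write
\[
\Delta M_t=\sum_k\int_0^t\Bigl[\Delta M_{s-}\Bigl(\tfrac{\lambda_k^\vartheta}{c}-1\Bigr)+M^{\widehat\vartheta}_{s-}\,\tfrac{\lambda_k^\vartheta-\lambda_k^{\widehat\vartheta}}{c}\Bigr]\dd\Yt_k(s).
\]
The first term has a bounded coefficient. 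The second is bounded by $C_ML(|\Delta X_{s-}|+\rho(\vartheta_s,\widehat\vartheta_s))$, using Assumption~\ref{ass:intensity} and $M\le C_M$. Applying \eqref{eq:combinedmaximal}, inserting the $X$-estimate and using Gronwall gives the same bound for $\EE\sup|\Delta M|^p$. The $\rho$-term is then exactly Lemma~\ref{lem:distance}, \eqref{eq:weighteddistancep}. Its hypotheses hold because $\Gamma(\vartheta)_s$ is a probability measure ($M\ge0$ and $\EE M_s=1$), and $\EE\sup|X^\vartheta|^2\le R$ by \eqref{eq:decoupledmoment} together with the input moment bounds. Since $\rho(\vartheta_s,\widehat\vartheta_s)\le\sup_{r\le s}\rho(\vartheta_r,\widehat\vartheta_r)$, this yields \eqref{eq:decoupleddifference}.

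\textbf{Continuity of $\Gamma(\vartheta)$.} For $s,t\in[0,T]$, repeat the computation in Lemma~\ref{lem:distance} with the pairs $(M_t,X_t)$ and $(M_s,X_s)$ of the same process. This gives
\[
\rho(\Gamma(\vartheta)_t,\Gamma(\vartheta)_s)\le\|M_t-M_s\|_{L^2}\,\|1+|X_t|\|_{L^2}+C_M\|X_t-X_s\|_{L^2}.
\]
Both differences tend to zero as $s\to t$ by $L^2$-continuity of the stochastic integrals. Fixed-time jumps have probability zero, because the Poisson noises have no fixed discontinuities, and the integrands are bounded in $L^2$. Finite first moments follow from \eqref{eq:decoupledmoment} and $M\le C_M$.

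The main obstacle I expect is the $M$-difference: one must avoid any division by $M$ or $\lambda$. Here it is harmless, because the equation for $\Delta M$ is linear with bounded coefficients and the inhomogeneity is controlled by the deterministic bound $C_M$. The other delicate point is keeping track that the constants depend only on $R$ and not on $\vartheta$ beyond its moments.
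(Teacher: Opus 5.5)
Your proposal is correct and follows essentially the same route as the paper: Picard iteration and Gronwall for $X^\vartheta$, Lemma~\ref{lem:boundedlikelihood} for $0\le M^\vartheta\le C_M$, the same linear equation for $\Delta M$ with inhomogeneity $M^{\widehat\vartheta}_{s-}(\lambda_k^\vartheta-\lambda_k^{\widehat\vartheta})/c$ controlled through $C_M$, Lemma~\ref{lem:distance} for the $\rho$-term, and the same distance computation between times $s$ and $t$ for continuity of $\Gamma(\vartheta)$. Your explicit uniqueness argument for $M$ and your localisation remark only make precise points that the paper leaves implicit; the paper gets finiteness of the moments directly from the Picard limit in $\Sp^p$.
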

	\begin{proof}
	For a given $\vartheta$, let $X^{(0)}_t=x_0$. For any $j\in \NN$, define $X^{(j+1)}$ by
	\begin{align*}
		X_t^{(j+1)}=x_0+
		\int_0^tb(s,X_{s-}^{(j)},\vartheta_s,u_s)\,\drm s+\int_0^t\sigma(s,X_{s-}^{(j)},\vartheta_s,u_s)\,\drm W_s+\int_0^t\int_E\eta(s,X_{s-}^{(j)},\vartheta_s,u_s,\zeta)
		\widetilde N(\drm\zeta,\drm s).
	\end{align*}
	
	Since $\vartheta\in C([0,T];\mathcal P_1(\R^d))$, we have $\sup_{t\le T}m_1(\vartheta_t)<\infty.$  By Assumption~\ref{ass:state} and Lemma~\ref{lem:maximal}, for every $p\ge2$,
	\[ \EE\sup_{r\le t}|X_r^{(j+1)}|^p\le C_p\bigg[|x_0|^p+\int_0^t\EE\sup_{r\le s}|X_r^{(j)}|^p\,\drm s+\int_0^t
	\bigl(1+m_1(\vartheta_s)^p+\EE|u_s|^p\bigr)\drm s\bigg].
	\]
	Since $X^{(0)}=x_0$ and $u\in\cU_{ad}$, by induction, $X^{(j)}\in\mathcal S^p$ for every $j\ge0$. In particular,
	\[a_0(T):=\EE\sup_{t\le T}|X_t^{(1)}-X_t^{(0)}|^p<\infty.
	\]
	Moreover, based on Assumption \ref{ass:state} and Lemma~\ref{lem:maximal}, we derive that there exists a constant $C_p$ such that  $a_{j+1}(t)\le C_p\int_0^t a_j(s)\,\drm s,$ where $a_j(t)=\EE\sup_{s\le t}|X_s^{(j+1)}-X_s^{(j)}|^p.$ By induction, $
	a_j(T)\le\frac{(C_pT)^j}{j!}a_0(T)$. Therefore, 
	\[\sum_{j=0}^\infty\|X^{(j+1)}-X^{(j)}\|_{\mathcal S^p}\le a_0(T)^{1/p}\sum_{j=0}^\infty\left(\frac{(C_pT)^j}{j!}\right)^{1/p}<\infty.
	\]
	Thus $(X^{(j)})_{j\ge0}$ is a Cauchy sequence in $\mathcal S^p$ and converges to some $X^\vartheta\in\mathcal S^p$.
	Passing to the limit in the Picard equation shows that $X^\vartheta$ is a solution. Moreover, by using the same Lipschitz estimate and Gronwall's inequality, we obtain the uniqueness of this solution. 
	
	Furthermore, based on the growth condition and Lemma~\ref{lem:maximal}, we have 
	\[ \EE\sup_{s\le t}|X_s^\vartheta|^p\le C_p\bigg(|x_0|^p+\int_0^t
	\EE\sup_{r\le s}|X_r^\vartheta|^p\,\drm s+\int_0^t
	\bigl(1+m_1(\vartheta_s)^p
	+\EE|u_s|^p\bigr)\,\drm s
	\bigg).
	\]
	By	applying Gronwall's inequality, we have	\eqref{eq:decoupledmoment} holds for $X^\vartheta$. Based on Assumption \ref{ass:intensity}, combining with  Lemma~\ref{lem:boundedlikelihood},  $M^\vartheta$ satisfying $0\le M_t^\vartheta\le C_M$.
	
	Let $\vartheta,\widehat\vartheta$ be two probability flows and  let $	\Delta X=X^\vartheta-X^{\widehat\vartheta}$, $ 
	\Delta M=M^\vartheta-M^{\widehat\vartheta}$, and $d(t)=\rho(\vartheta_t,\widehat\vartheta_t).$ By the Lipschitz condition,
	\begin{equation*} 
		A(t):=\EE\sup_{s\le t}|\Delta X_s|^p\le C_p\int_0^t\bigl(A(s)+d(s)^p\bigr)\,\drm s.
	\end{equation*}
	Let  $\lambda_k^\vartheta(t)=\lambda_k(t,X_{t-}^\vartheta,\vartheta_t,u_t)$. Then, 
	\begin{equation}\label{eq:Mdifference} 
		\drm\Delta M_t=\sum_{k=1}^n\bigg[\left(\frac{\lambda_k^\vartheta(t)}{c}-1\right)\Delta M_{t-}+\frac{M_{t-}^{\widehat\vartheta}}{c}\left(\lambda_k^\vartheta(t)-\lambda_k^{\widehat\vartheta}(t)\right)\bigg]\drm\widetilde Y_k(t).
	\end{equation}
	Since $0\le\lambda_k^\vartheta\le c$ and $0\le M^{\widehat\vartheta}\le C_M$, based on  Assumption~\ref{ass:intensity}, we have 
	\[ B(t):=\EE\sup_{s\le t}|\Delta M_s|^p\le C_p\int_0^t\bigl(B(s)+A(s)+d(s)^p\bigr)\,\drm s.
	\]
	Adding the above two inequalities and applying Gronwall's inequality, we obtain $A(t)+B(t)\le C_p\int_0^t d(s)^p\,\drm s.$
	By \eqref{eq:decoupledmoment} with $p=2$, $\sup_{s\le T}\EE|X_s^\vartheta|^2\le C_R$.
	
	Thus, based on Lemma~\ref{lem:distance},
	\[ \sup_{s\le t}\rho\bigl(\Gamma(\vartheta)_s,\Gamma(\widehat\vartheta)_s\bigr)^p\le C_{p,R}\left(A(t)+B(t)\right)\le C_{p,R}
	\int_0^t\sup_{r\le s}\rho(\vartheta_r,\widehat\vartheta_r)^p\,\drm s,
	\]
	which proves \eqref{eq:decoupleddifference}.
	
	Finally, by using Lemma~\ref{lem:boundedlikelihood} , $\Gamma(\vartheta)_t(\R^d)=\EE M_t^\vartheta=1$, and $
	m_p(\Gamma(\vartheta)_t)\le C_M\EE|X_t^\vartheta|^p$. Then, for $0\le s<t\le T$,  
	\[  \EE|X_t^\vartheta-X_s^\vartheta|^2\le 3\bigg[ (t-s)\EE\int_s^t|b_r|^2\,\drm r+\EE\int_s^t|\sigma_r|^2\,\drm r
	+\EE\int_s^t\|\eta_r\|_{L^2(\nu)}^2 \,\drm r \bigg], 
	\]
	where the coefficients are evaluated at $(r,X_{r-}^\vartheta,\vartheta_r,u_r)$. Similarly,
	\[
	\EE|M_t^\vartheta-M_s^\vartheta|^2 \le ncC_M^2(t-s).
	\]
	Thus,  we have that both quantities converge to zero as $t-s\to0$. Hence, by Lemma~\ref{lem:distance},
	\[
	\rho\bigl(\Gamma(\vartheta)_t, 	\Gamma(\vartheta)_s\bigr)\to 0, \qquad\text{as }t-s\to0.
	\] 
	Since $\Gamma(\vartheta)_t$ is a probability measure with finite first moment, we conclude that $\Gamma(\vartheta)\in C([0,T];\mathcal P_1(\R^d))$.
\end{proof}

	Now, we introduce the following theorem to state the solution and the control stability required for convex variations.
	
	\begin{theorem}\label{thm:forward}
	Suppose that Assumptions~\ref{ass:state}--\ref{ass:intensity} hold. Then, for any $u\in\cU_{ad}$, the system \eqref{eq:weightedlaw} admits a unique strong solution
	$(X^u,M^u,\mu^u)\in\bigcap_{p\ge2}\mathcal S^p(\R^d)\times\mathcal S^\infty(\R)\times \bigcap_{p\ge1}C([0,T];\mathcal P_p(\R^d))$. 
	
	Moreover, for each finite $p\ge2$, there exists a constant $C_p>0$ such that for any $t\in[0,T]$,
	\begin{equation}\label{eq:forwardmoment}
		\EE\sup_{t\le T}|X_t^u|^p+\sup_{t\le T}m_p(\mu_t^u)\le C_p\left(1+|x_0|^p+\norm{u}_{p,T}^p	\right), \text{ and } 0\le M_t^u\le\CM.
	\end{equation}
	Furthermore, for any $p\ge2$ and $R>0$, there exists $C_{p,R}>0$ such that, for all
	$u,v\in\cU_{ad}$ with $\norm{u}_{p,T}+\norm{v}_{p,T}\le R$, for any $t\in[0,T]$,
	\begin{equation}\label{eq:stability}
		\EE\sup_{s\le t}|X_s^u-X_s^v|^p+\EE\sup_{s\le t}|M_s^u-M_s^v|^p+\sup_{s\le t}\rho(\mu_s^u,\mu_s^v)^p\le C_{p,R}\EE\int_0^t|u_s-v_s|^p\,\drm s.
	\end{equation}
\end{theorem}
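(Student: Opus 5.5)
We obtain existence and uniqueness from a fixed point of the map $\Gamma$ of Lemma~\ref{lem:decoupled}, and the moment and stability bounds from Gronwall arguments built on Lemmas~\ref{lem:maximal} and \ref{lem:distance}.

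\textbf{Step 1: a ball invariant under $\Gamma$.} Fix $u\in\Uad$ and let $K=\sup_{t\le T}m_1(\vartheta_t)$. By \eqref{eq:decoupledmoment}, $m_1(\Gamma(\vartheta)_t)\le C_M\,\EE|X^\vartheta_t|$. This gives a bound of the form
\[
m_1(\Gamma(\vartheta)_t)\le C\Bigl(1+|x_0|+\|u\|_{1,T}+\int_0^t m_1(\vartheta_s)\,\drm s\Bigr).
\]
Because of the integral in time, the set $\mathcal K=\{\vartheta:\ m_1(\vartheta_t)\le A e^{\beta t}\ \text{for all } t\}$ is mapped into itself for suitable $A,\beta$. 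The same argument with $m_2$ yields uniform second moments on a closed $\Gamma$-invariant set, which is what \eqref{eq:decoupleddifference} needs.

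\textbf{Step 2: fixed point.} On this invariant set, $C([0,T];\mathcal P_1(\R^d))$ with the uniform $\rho=W_1$ distance is complete. Iterating \eqref{eq:decoupleddifference} gives
\[
\sup_{t\le T}\rho\bigl(\Gamma^k(\vartheta)_t,\Gamma^k(\widehat\vartheta)_t\bigr)^p\le \frac{(CT)^k}{k!}\,\sup_{t\le T}\rho(\vartheta_t,\widehat\vartheta_t)^p .
\]
So some power $\Gamma^k$ is a contraction, and $\Gamma$ has a unique fixed point $\mu^u$. Set $(X^u,M^u)=(X^{\mu^u},M^{\mu^u})$.

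For uniqueness, any solution $(X,M,\mu)$ of \eqref{eq:weightedlaw} has $\mu$ continuous and satisfies $\mu=\Gamma(\mu)$. Two fixed points need not a priori lie in the same ball, so we apply \eqref{eq:decoupleddifference} directly with $R$ enlarged to cover both. Gronwall then gives $\rho\equiv0$, and uniqueness of the decoupled equations gives equality of $(X,M)$.

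\textbf{Step 3: moments and regularity.} Insert $m_1(\mu_s^u)\le C_M\EE|X_s^u|\le C_M(\EE\sup_{r\le s}|X_r^u|^p)^{1/p}$ into \eqref{eq:decoupledmoment}. Gronwall then yields the $p$-th moment bound in \eqref{eq:forwardmoment}. We also have $m_p(\mu^u_t)\le C_M\EE|X^u_t|^p$, and $0\le M^u\le C_M$ follows from Lemma~\ref{lem:boundedlikelihood}.

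Membership of $\mu^u$ in $C([0,T];\mathcal P_p(\R^d))$ for every $p$ follows from Lemma~\ref{lem:wasserstein}. We use $W_1$-continuity, which holds since $\Gamma(\mu^u)\in C([0,T];\mathcal P_1)$, together with uniform $q$-th moments for some $q>p$. Membership $M^u\in\Sp^\infty$ is immediate from the path bound.

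\textbf{Step 4: stability \eqref{eq:stability}.} This is the main obstacle, because the measure term is coupled through the weighted law. Set $A(t)=\EE\sup_{s\le t}|X^u_s-X^v_s|^p$, $B(t)=\EE\sup_{s\le t}|M^u_s-M^v_s|^p$ and $d(t)=\sup_{s\le t}\rho(\mu^u_s,\mu^v_s)$.

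For $X$, the Lipschitz bounds and \eqref{eq:combinedmaximal} give
\[
A(t)\le C\int_0^t\bigl(A(s)+d(s)^p\bigr)\drm s+C\,\EE\int_0^t|u_s-v_s|^p\,\drm s .
\]
For $M$, write the difference equation as in \eqref{eq:Mdifference}, now with the extra $|u-v|$ term in $\lambda_k^u-\lambda_k^v$. This is controlled using $0\le M^v\le C_M$, which gives the analogous bound for $B$.

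The measure term is then handled by Lemma~\ref{lem:distance}, via $d(t)^p\le C_{p,R}(A(t)+B(t))$. Its constant needs a uniform bound on $\EE\sup|X|^2$, which Step 3 supplies through the bound $\|u\|_{p,T}+\|v\|_{p,T}\le R$.

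Substituting and applying Gronwall to $A+B$ closes the estimate, and Lemma~\ref{lem:distance} once more gives the bound on $d$.
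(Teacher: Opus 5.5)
Your proposal is correct and follows essentially the paper's proof. The paper's Picard scheme $\mu^{j+1}=\Gamma(\mu^j)$, started at $\delta_{x_0}$, is exactly the iteration behind your contraction of $\Gamma^k$, and it closes with the same factorial bound from \eqref{eq:decoupleddifference}; \eqref{eq:forwardmoment}, the $\mathcal P_p$-continuity and \eqref{eq:stability} are then obtained as in your Steps 3--4, via Gronwall, Lemma~\ref{lem:wasserstein} and Lemma~\ref{lem:distance}.

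The differences are minor. The paper keeps second moments uniform by induction along the iterates rather than through an invariant set, and it passes to the limit in the $X$ and $M$ equations, whereas you simply take $(X^{\mu^u},M^{\mu^u})$. Your explicit Gronwall argument for two fixed points supplies the uniqueness that the paper leaves implicit. One small correction: your Step~1 bound should be written for $m_1^2$ or $m_2$, since \eqref{eq:decoupledmoment} holds only for $p\ge2$, but the invariance argument is unchanged.
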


\begin{proof}
	Fix $u\in\Uad$. Let $\mu_t^0=\delta_{x_0}$, $X_t^0=x_0$ and $M_t^0=1$. For $j\ge0$, define $\mu^{j+1}=\Gamma(\mu^j)$, where $(X^{j+1},M^{j+1})$ is the solution corresponding to the prescribed flow $\mu^j$ in Lemma~\ref{lem:decoupled}.
	
	For every $p\ge2$, let $A_j(t)=\EE\sup_{s\le t}|X_s^j|^p$. Since $\EE M_t^j=1$, applying Jensen's inequality yields
	\[ m_1(\mu_t^j)^p=\left(\EE[M_t^j|X_t^j|]\right)^p \le\CM A_j(t). 
	\]
	It follows from \eqref{eq:decoupledmoment} that $A_{j+1}(t)\le C_0+C_p\int_0^t A_j(s)\,\drm s$, where $C_0=C_p\left(1+|x_0|^p+\norm{u}_{p,T}^p\right)$. Taking $C_0$ sufficiently large so that $A_0(t)\le C_0$. Then, by using the induction method, we can derive that   $A_j(t)\le C_0e^{C_pt},\ j\ge0$. 
	
	Moreover, since $m_p(\mu_t^j)=\EE[M_t^j|X_t^j|^p]\le\CM A_j(t)$, then there exists a constant $C_p$ such that 
	\begin{equation}\label{eq:picarduniform}
		\sup_{j\ge0}\left(\EE\sup_{t\le T}|X_t^j|^p+\sup_{t\le T}m_p(\mu_t^j)\right)\le C_p\left(1+|x_0|^p+\norm{u}_{p,T}^p\right).
	\end{equation}
	For $j\ge0$, define
	\[D_j(t)=\EE\sup_{s\le t}|X_s^{j+1}-X_s^j|^p+\EE\sup_{s\le t}|M_s^{j+1}-M_s^j|^p+\sup_{s\le t}\rho(\mu_s^{j+1},\mu_s^j)^p.
	\]
	By \eqref{eq:picarduniform}, $D_0(T)<\infty$. Applying \eqref{eq:decoupleddifference} and combining with the uniform second-moment bound in \eqref{eq:picarduniform}, we obtain $D_j(t)\le C_p\int_0^t D_{j-1}(s)\,\drm s,\ j\ge1$.  Then, iterating this inequality gives $D_j(T)\le\frac{(C_pT)^j}{j!}D_0(T)$, and hence $\sum_{j=0}^{\infty}D_j(T)^{1/p}\le D_0(T)^{1/p}\sum_{j=0}^{\infty}\frac{(C_pT)^{j/p}}{(j!)^{1/p}}<\infty$. 
	
	Thus, $(X^j)$ and $(M^j)$ are Cauchy sequences in $\mathcal S^p$, and $(\mu^j)$ is Cauchy in $C([0,T];\mathcal P_1(\R^d))$ under the uniform $\rho$ metric. Therefore, $
	X^j\to X$ and $M^j\to M$ in $\mathcal S^p$, and $\sup_{t\le T}\rho(\mu_t^j,\mu_t)\to 0$.

	Since convergence in $\mathcal S^p$ implies convergence in $\mathcal S^2$,	the limits are consistent for all finite $p\ge2$. By Assumption~\ref{ass:state},
	\[ 
		\EE\int_0^T\bigl|b(t,X_{t-}^{j+1},\mu_t^j,u_t)-b(t,X_{t-},\mu_t,u_t)\bigr|^p\,\drm t\le C_p\left(T\EE\sup_{t\le T}|X_t^{j+1}-X_t|^p
		+\int_0^T\rho(\mu_t^j,\mu_t)^p\,\drm t\right)\to0. 
	\]
	The same convergence holds for $\sigma$ and $\eta$ in the corresponding norms of Lemma~\ref{lem:maximal}. Therefore,  the state equation passes to the limit in $\mathcal S^p$. Similarly, by Assumption~\ref{ass:intensity},
	\[ |\lambda_k(t,X_{t-}^{j+1},\mu_t^j,u_t)-\lambda_k(t,X_{t-},\mu_t,u_t)|\le L\left(|X_{t-}^{j+1}-X_{t-}|+\rho(\mu_t^j,\mu_t)\right).
	\]
	
	Let  $H_{k,t}^{j+1}=M_{t-}^{j+1}\left(\frac{\lambda_k(t,X_{t-}^{j+1},\mu_t^j,u_t)}{c}-1
	\right)$, and $H_{k,t}=M_{t-}\left(\frac{\lambda_k(t,X_{t-},\mu_t,u_t)}{c}-1
	\right)$. Then, by Assumption~\ref{ass:intensity} and $0\le M^{j+1},M\le\CM$, we have
	\[\EE\int_0^T|H_{k,t}^{j+1}-H_{k,t}|^p\,\drm t\le C_p\bigg(\EE\sup_{t\le T}|M_t^{j+1}-M_t|^p+\EE\sup_{t\le T}|X_t^{j+1}-X_t|^p+\sup_{t\le T}\rho(\mu_t^j,\mu_t)^p
	\bigg)\to 0.
	\]
	Therefore, combining with Lemma~\ref{lem:maximal}, we have  $\EE\sup_{t\le T}
	\left|\sum_{k=1}^n\int_0^t(H_{k,s}^{j+1}-H_{k,s})\,\drm\widetilde Y_k(s)	\right|^p\to 0$.
	
	Since $M_t^{j+1}=1+\sum_{k=1}^n\int_0^tH_{k,s}^{j+1}\,\drm\widetilde Y_k(s)$,
	we obtain
	\begin{align}
		&\EE\sup_{t\le T}\left|M_t-1-\sum_{k=1}^n\int_0^t H_{k,s}\,\drm\widetilde Y_k(s)	\right|^p\\
		&\le C_p\EE\sup_{t\le T}|M_t-M_t^{j+1}|^p+C_p\EE\sup_{t\le T}\left|\sum_{k=1}^n\int_0^t(H_{k,s}^{j+1}-H_{k,s})\,\drm\widetilde Y_k(s)\right|^p
		\to 0,\text{ as }j\to\infty.
	\end{align}
	Hence, $M_t=1+\sum_{k=1}^n\int_0^tM_{s-}\left(\frac{\lambda_k(s,X_{s-},\mu_s,u_s)}c-1
	\right)\drm\widetilde Y_k(s)$, which implies that $M$ satisfies the likelihood equation in \eqref{eq:weightedlaw}. 
	
	Furthermore, by using Lemma~\ref{lem:distance},
	\[ \rho\left(\mu_t^j,\EE[M_t\delta_{X_t}]\right)\le\norm{M_t^j-M_t}_{L^2}\norm{1+|X_t|}_{L^2}+\CM\norm{X_t^j-X_t}_{L^2}.
	\]
	The right-hand side converges to zero uniformly in $t$.  Since $\mu^j\to\mu$ uniformly in $\rho$, we obtain $\mu_t=\EE[M_t\delta_{X_t}],\ t\in[0,T]$. Thus $(X,M,\mu)$ solves \eqref{eq:weightedlaw}.

	By \eqref{eq:picarduniform} and the convergence in $\mathcal S^p$, $\EE\sup_{t\le T}|X_t|^p\le C_p\left(1+|x_0|^p+\norm{u}_{p,T}^p\right)$. Since $m_p(\mu_t)=\EE[M_t|X_t|^p]\le\CM\EE|X_t|^p$, we obtain \eqref{eq:forwardmoment}.

	The preceding convergence implies $\mu\in C([0,T];\mathcal P_1(\R^d))$. For any finite $r>1$, choose $q>r$. By \eqref{eq:forwardmoment}, $\sup_{t\le T}m_q(\mu_t)<\infty$. Then, by using Lemma~\ref{lem:wasserstein}, $W_r(\mu_t,\mu_s)^r\le C_{r,q}\rho(\mu_t,\mu_s)^{(q-r)/(q-1)}$, and thus $\lim_{|t-s|\to0}W_r(\mu_t,\mu_s)=0$. Hence $\mu\in C([0,T];\mathcal P_r(\R^d))$ for every finite $r\ge1$.

	We next establish \eqref{eq:stability}. Let $u,v\in\Uad$, and write $\Delta X=X^u-X^v$, and $\Delta M=M^u-M^v$. Set $F_p(t)=\EE\sup_{s\le t}|\Delta X_s|^p+\EE\sup_{s\le t}|\Delta M_s|^p$. By the Lipschitz assumptions and Lemma~\ref{lem:maximal},
	$	F_p(t)\le C_p\int_0^t\bigl(F_p(s)+\rho(\mu_s^u,\mu_s^v)^p+\EE|u_s-v_s|^p\bigr)\,\drm s.
	$
	
	If $\norm{u}_{p,T}+\norm{v}_{p,T}\le R$, then combining with \eqref{eq:forwardmoment} with $p=2$ and Lemma~\ref{lem:distance}, we have $\sup_{s\le t}\rho(\mu_s^u,\mu_s^v)^p\le C_{p,R}F_p(t)$. Thus,  $F_p(t)\le C_{p,R}\int_0^t\left(F_p(s)+\EE|u_s-v_s|^p\right)\,\drm s$. Thus, by using Gronwall's inequality, $F_p(t)\le C_{p,R}\EE\int_0^t|u_s-v_s|^p\,\drm s$. Combining this with the preceding estimate for $\rho$ proves \eqref{eq:stability}.

\end{proof}
	
	\subsection{Physical measure and filtering}
For $u\in\mathcal U_{\text{ad}}$, define the probability measure $\mathbb P^u$ on $(\Omega,\mathcal F_T)$ via the density $M_T^u$. Lemma~\ref{lem:boundedlikelihood} ensures that $\mathbb P^u$ is a probability measure satisfying $\mathbb P^u\ll\mathbb Q$. Its density process is given by
\begin{equation}\label{eq:physical}
	\left.\frac{\mathrm d\mathbb P^u}{\mathrm d\mathbb Q}\right|_{\mathcal F_t}=M_t^u,
	\qquad t\in[0,T].
\end{equation}
We call $\mathbb P^u$ the physical measure associated with the control. We do not assume equivalence between $\mathbb P^u$ and $\mathbb Q$.

The martingale property yields a probability measure for each prescribed intensity process. The following lemma characterizes the observation and state noise characteristics under this measure.

	\begin{lemma}\label{lem:girsanov}
		Let $M$ be the likelihood of Lemma~\ref{lem:boundedlikelihood} and let
		$\PP^u$ be defined by \eqref{eq:physical}. Under $\PP^u$, $Y_k$ has compensator
		$\int_0^t\lambda_k(s)\dd s$, $W$ is an $\FF$-Brownian motion, and $\bN$
		retains compensator $\nu(\mathrm d\zeta)\mathrm dt$.
	\end{lemma}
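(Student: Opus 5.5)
The plan is to use the Girsanov theorem for absolutely continuous (not necessarily equivalent) changes of measure. The key point is that $M$ is a bounded nonnegative $\QQ$-martingale by Lemma~\ref{lem:boundedlikelihood}. Throughout, we use the following criterion. Let $Z$ be a $\QQ$-local martingale, or more generally an adapted process, with suitable integrability. If $MZ$ is a $\QQ$-martingale, then $Z$ is a $\PP^u$-martingale. To see this, take $s\le t$ and $A\in\cF_s$. Then
\[
\EE^{\PP^u}[\ind_A Z_t]=\EE[\ind_A M_tZ_t]=\EE[\ind_A M_sZ_s]=\EE^{\PP^u}[\ind_A Z_s].
\]
Integrability under $\PP^u$ is automatic from $M\le C_M$ whenever $Z$ is $\QQ$-integrable. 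This bypasses any need for $1/M$.

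\emph{Observation channels.} Set $Z^k_t=Y_k(t)-\int_0^t\lambda_k(s)\dd s$. Write $Z^k=\Yt_k+\int_0^t(c-\lambda_k)\dd s$ and apply the product rule to $MZ^k$:
\[
\drm(M_tZ^k_t)=M_{t-}\drm Z^k_t+Z^k_{t-}\drm M_t+\drm[M,Z^k]_t .
\]
The bracket equals $M_{t-}(\lambda_k/c-1)\drm Y_k(t)$. Its compensator $M_{t-}(\lambda_k-c)\dd t$ cancels the drift $M_{t-}(c-\lambda_k)\dd t$ coming from $M_{t-}\drm Z^k_t$. Hence $MZ^k$ is a $\QQ$-local martingale. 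On $[0,T]$ we have $|Z^k_t|\le Y_k(T)+\lambda_*T$, and $Y_k(T)$ is Poisson under $\QQ$, so all moments are finite. Together with $M\le C_M$, this shows the local martingale is of class (D), hence a true martingale. Therefore $Z^k$ is a $\PP^u$-martingale, i.e.\ $\int_0^t\lambda_k\dd s$ is the $\PP^u$-compensator of $Y_k$. The same cancellation is exactly what the predictable-projection form of Girsanov's theorem (Jacod--Shiryaev III.3.24) gives, since $\PP^u\ll\QQ$ with density process $M$.

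\emph{State noises.} For $W$, the product rule gives $\drm(M_tW_t)=M_{t-}\drm W_t+W_t\drm M_t+\drm[M,W]_t$. Here $[M,W]=0$ because $M$ is purely discontinuous and driven only by the $Y_k$, so $MW$ is a $\QQ$-martingale by the same integrability argument. The same holds for $M(W^iW^j-\delta_{ij}t)$. Thus $W$ is a continuous $\PP^u$-martingale with quadratic variation $tI$, and since the quadratic variation is defined pathwise it is unchanged under the absolutely continuous measure. Lévy's characterisation then shows $W$ is an $\FF$-Brownian motion under $\PP^u$.

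For $\bN$, fix $A\subset E$ with $\nu(A)<\infty$, and more generally a bounded predictable kernel. Then $\Nt(A\times[0,t])$ has no common jumps with $Y$: $\bN$ and the $Y_k$ are independent Poisson noises, so $\QQ$-a.s.\ they never jump simultaneously. Hence $[M,\Nt(A\times\cdot)]=0$, and $M\Nt(A\times\cdot)$ is a $\QQ$-martingale. Consequently $\nu(\drm\zeta)\dd t$ remains the $\PP^u$-compensator of $\bN$.

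\emph{Main obstacle.} The delicate step is justifying the true-martingale property and the measure-change criterion without equivalence. The usual Girsanov formulas are written with $1/M_{t-}$, and $M$ may hit zero. The criterion above avoids division entirely, so the only real work is the class (D) integrability. This follows from the deterministic bound $M\le C_M$ and the Poisson moments of $Y_k(T)$ and $\bN(A\times[0,T])$. A secondary check is that null sets of the $\QQ$-augmentation stay $\PP^u$-null, which holds because $\PP^u\ll\QQ$. As a result, $\FF$ satisfies the usual conditions under $\PP^u$ as well.
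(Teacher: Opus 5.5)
Your proof is correct. For $W$ it is essentially the paper's argument: $[M,W^i]=0$, so $MW^i$ is a $\QQ$-martingale, hence $W^i$ is a $\PP^u$-martingale, and L\'evy's characterization applies with the pathwise bracket.

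For the jump noises you take a somewhat different route. The paper does not form the product $MZ^k$. Instead it tests against bounded nonnegative predictable $H$. It writes $\EE^{\PP^u}\int_0^T H\,\drm Y_k=\EE\int_0^T H_tM_t\,\drm Y_k(t)$ by optional sampling at the jump times. It then replaces $M_t$ by $M_{t-}\lambda_k(t)/c$ at jumps of $Y_k$, compensates under $\QQ$, and converts back. For $\bN$ it does the same, using $M_t=M_{t-}$ at jumps of $\bN$.

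Both proofs rest on the same two facts: the jump relation $M_\tau=M_{\tau-}\lambda_k(\tau)/c$ and the absence of common jumps between $Y$ and $(W,\bN)$. Neither divides by $M$.

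What each version buys:
\begin{itemize}
\item Your product-rule version gives directly that $Y_k-\int_0^\cdot\lambda_k\,\drm s$ and $\Nt(A\times\cdot)$ are $\PP^u$-martingales. The cost is a class (D) domination via Poisson moments. For $\bN$ you also need a monotone-class step from sets $A$ with $\nu(A)<\infty$ to general predictable kernels, which you only gesture at.
\item The paper's test-integrand version gives the compensator identity for all predictable integrands at once. It needs only boundedness of $H$.
\end{itemize}

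One side remark in your last paragraph is inaccurate. Since $\PP^u$ need not be equivalent to $\QQ$, $\FF$ does not satisfy the usual conditions under $\PP^u$. For instance, $\{M_T=0\}$ can be $\PP^u$-null while having $\QQ$-probability strictly between $0$ and $1$, so it is not in $\cF_0$. This is harmless, because none of your steps uses $\PP^u$-completeness of $\FF$. If desired, you can pass to the $\PP^u$-augmentation.
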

	\begin{proof}
	Let $H\ge0$ be any bounded predictable process. From Lemma \ref{lem:boundedlikelihood}, we have that $M$ is a bounded martingale. Then, by using the optional sampling theorem, for any stopping time $\tau_j\le T$,  we have $\EE[M_T\mid \cF_{\tau_j}]=M_{\tau_j}$. 
	
	Moreover, note that since $M$ is a $\QQ$-martingale,   for any nonnegative predictable process $G$, 
	\begin{align}
		\EE^{\PP}\int_0^T G_t\,\drm t=\EE^{\QQ}\int_0^T\EE^{\QQ}[M_T\mid\mathcal F_t]G_t\,\drm t=\EE^{\QQ}\int_0^T M_{t-}G_t\,\drm t.
	\end{align}
	Moreover, since $\Delta M_t=M_{t-}(\lambda_k(t)/c-1)\Delta Y_k(t)$, and $Y$ admits finite jumps, then  combining with the above equation,  
	\begin{align*}
		\EE^{\PP}\int_0^T H_t\,\drm Y_k(t)
		&=\EE^{\QQ}\int_0^T H_tM_t\,\drm Y_k(t)\\
		&=\EE^{\QQ}\int_0^T
		H_tM_{t-}\frac{\lambda_k(t)}{c}\,\drm Y_k(t)\\
		&=\EE^{\QQ}\int_0^T
		H_tM_{t-}\lambda_k(t)\,\drm t\\
		&=\EE^{\PP}\int_0^T
		H_t\lambda_k(t)\,\drm t.
	\end{align*}
	Hence, the $\PP$-compensator of $Y_k$ is $\int_0^t\lambda_k(s)\drm s$.

	Similarly, for any bounded nonnegative predictable $H(t,\zeta)$ supported on a set of finite $\nu$-measure, 
	\begin{align*}
		&\EE^{\PP}\int_0^T\int_E
		H(t,\zeta)\bar N(\drm\zeta,\drm t)\\
		&=\EE^{\QQ}\int_0^T\int_E
		M_tH(t,\zeta)\bar N(\drm\zeta,\drm t)\\
		&=\EE^{\QQ}\int_0^T\int_E
		M_{t-}H(t,\zeta)\bar N(\drm\zeta,\drm t)\\
		&=\EE^{\QQ}\int_0^T\int_E
		M_{t-}H(t,\zeta)\nu(\drm\zeta)\drm t\\
		&=\EE^{\PP}\int_0^T\int_E
		H(t,\zeta)\nu(\drm\zeta)\drm t.
	\end{align*}

	Finally, since $M$ is a purely discontinuous $\QQ$-martingale, $[M,W^i]=0$ for $i=1,\ldots,d_W$. By It\^o's formula,
	\[M_tW_t^i=\int_0^t M_{s-}\,\drm W_s^i+\int_0^t W_s^i\,\drm M_s.
	\]
	By \eqref{eq:pathbound}, $\EE^{\QQ}\int_0^T M_{s-}^2\,\drm s\le C_M^2T$, and $
	\EE^{\QQ}\int_0^T |W_s^i|^2\,\drm\langle M\rangle_s\le ncC_M^2\int_0^T\EE^{\QQ}|W_s^i|^2\,\drm s<\infty.$
	
	Hence, we can derive that $MW^i$ is a square-integrable $\QQ$-martingale. 
	
	Moreover, for  $A\in\mathcal F_s$ and $0\le s\le t\le T$,
	\[ 
	\EE^{\PP}[\mathds{1}_A W_t^i]=\EE^{\QQ}[\mathds{1}_A M_tW_t^i]=\EE^{\QQ}[\mathds{1}_A M_sW_s^i]=\EE^{\PP}[\mathds{1}_A W_s^i].
	\]
	Thus, $W$ is a continuous $\PP$-martingale. Since $\PP\ll\QQ$ and for $1\le i,j\le d_W$, $[W^i,W^j]_t=\delta_{ij}t$, by using the L\'evy Characterization Theorem, we have  $W$ is an $\mathbb F$-Brownian motion under $\PP$.
	
\end{proof}
We may now identify the weighted marginal with the state distribution under the physical measure and verify finiteness of the objective. This finishes the formulation of the control problem prior to differentiation.

	\begin{theorem}\label{thm:physicalrepresentation}
		Under Assumptions~\ref{ass:state}--\ref{ass:intensity},
		$\mu_t^u=\Law_{\PP^u}(X_t^u)$ for every $u\in\Uad$ and $t\leq T$.
		If Assumption~\ref{ass:regularity} also holds, then $J(u)$ is finite and
		\begin{equation}\label{eq:physicalcost}
			J(u)=\EE^{\PP^u}\left[\Phi(X_T^u,\mu_T^u)
			+\int_0^T f(t,X_{t-}^u,\mu_t^u,u_t)\dd t\right].
		\end{equation}
	\end{theorem}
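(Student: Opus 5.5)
The first claim follows from the definition of $\PP^u$ and the martingale property of $M^u$ in Lemma~\ref{lem:boundedlikelihood}. Fix $t\le T$ and a bounded Borel $\varphi$. Since $\varphi(X_t^u)$ is $\cF_t$-measurable and bounded, the density process \eqref{eq:physical} gives
\[
\EE^{\PP^u}[\varphi(X_t^u)]=\EE^{\QQ}\bigl[M_T^u\varphi(X_t^u)\bigr]=\EE^{\QQ}\bigl[\EE^{\QQ}[M_T^u\mid\cF_t]\varphi(X_t^u)\bigr]=\EE^{\QQ}[M_t^u\varphi(X_t^u)]=\langle\mu_t^u,\varphi\rangle.
\]
Bounded Borel functions determine a Borel probability measure, so $\mu_t^u=\Law_{\PP^u}(X_t^u)$. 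No equivalence of $\PP^u$ and $\QQ$ is used; we only need $M_T^u\ge0$ and $\EE M_T^u=1$.

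For finiteness of $J(u)$, I would bound each term using \eqref{eq:costderivatives} and the moment estimates of Theorem~\ref{thm:forward}. We have $|\Phi(X_T^u,\mu_T^u)|\le C(1+|X_T^u|^2+m_2(\mu_T^u))$. By \eqref{eq:forwardmoment}, both $\EE|X_T^u|^2$ and $\sup_t m_2(\mu_t^u)$ are finite. Together with $0\le M_T^u\le C_M$, this gives $\EE[M_T^u|\Phi(X_T^u,\mu_T^u)|]<\infty$. For the running cost, $|f(t,X_{t-}^u,\mu_t^u,u_t)|\le C(1+\sup_s|X_s^u|^2+\sup_s m_2(\mu_s^u)+|u_t|^2)$. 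Integrating in $t$ and using $\EE\int_0^T|u_t|^2\dd t<\infty$ (by admissibility) bounds $\EE\int_0^T M_t^u|f|\dd t$. The measure arguments lie in $\mathcal P_2$, as required for $f$ and $\Phi$ to be defined, because $\mu^u\in C([0,T];\mathcal P_p)$ for all $p$. Joint measurability of the integrand follows from the Borel measurability of $f$ and the continuity of $t\mapsto\mu_t^u$.

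For \eqref{eq:physicalcost}, the terminal term is immediate: $\EE^{\PP^u}[\Phi]=\EE^{\QQ}[M_T^u\Phi]$. For the running term, the step that needs care is replacing $M_T^u$ by $M_t^u$ inside the time integral. I would use Fubini's theorem, justified by the integrability just shown applied with $M_T^u\le C_M$. Then, for each $t$, the conditioning argument above applies because $f(t,X_{t-}^u,\mu_t^u,u_t)$ is $\cF_t$-measurable, even $\cF_{t-}$-measurable. To handle the unbounded integrand, truncate $f$ at level $K$, apply the conditioning identity, and let $K\to\infty$ by dominated convergence; this yields $\EE^{\QQ}[M_T^u f_t]=\EE^{\QQ}[M_t^u f_t]$. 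One could equally write $M_{t-}^u$ instead of $M_t^u$, since they differ only at countably many times. Integrating over $[0,T]$ and applying Fubini again gives $\EE^{\PP^u}\int_0^T f\dd t=\EE^{\QQ}\int_0^T M_t^u f\dd t$, which completes \eqref{eq:physicalcost}. The main obstacle is only the integrability bookkeeping needed to justify Fubini's theorem and the truncation limit; the rest is routine.
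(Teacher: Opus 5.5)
Your proposal is correct and follows essentially the same route as the paper: the martingale identity $\EE[M_T^u\mid\cF_t]=M_t^u$ identifies $\mu_t^u$ with $\Law_{\PP^u}(X_t^u)$, the quadratic growth bounds together with the forward moment estimates give finiteness, and Fubini's theorem plus conditioning turns $\EE^{\PP^u}\int_0^T f\dd t$ into $\EE\int_0^T M_t^u f\dd t$. Your truncation argument and measurability remarks only make explicit the conditioning step for an unbounded $\cF_t$-measurable integrand, which the paper applies directly.
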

	\begin{proof}
	For any bounded Borel function $\varphi$, by the martingale property of $M^u$, we have
	\[ 
	\EE^{\PP^u}[\varphi(X_t^u)]=\EE^{\QQ}[M_T^u\varphi(X_t^u)]=\EE^{\QQ}[M_t^u\varphi(X_t^u)]=\langle\mu_t^u,\varphi\rangle. 
	\]
	Thus $\mu_t^u=\Law_{\PP^u}(X_t^u)$.
	
	Under Assumption~\ref{ass:regularity}, the quadratic growth conditions and \eqref{eq:forwardmoment} yield
	\[
	\begin{aligned}
		&\EE\left[M_T^u|\Phi(X_T^u,\mu_T^u)|+\int_0^T M_t^u	|f(t,X_{t-}^u,\mu_t^u,u_t)|\,\drm t	\right]\\
		&\le C\bigg[1+2m_2(\mu_T^u)+\int_0^T(1+2m_2(\mu_t^u))\,\drm t+\CM\norm{u}_{2,T}^2
		\bigg]<\infty.
	\end{aligned}
	\]
	Since $\EE[M_T^u\mid\mathcal F_t]=M_t^u$, by using Fubini's theorem,
	\[
	\begin{aligned}
		&\EE^{\PP^u}\left[\int_0^T f(t,X_{t-}^u,\mu_t^u,u_t)\,\drm t\right]\\
		&=\int_0^T\EE^{\QQ}\left[M_T^u f(t,X_{t-}^u,\mu_t^u,u_t)\right]\drm t\\
		&=\int_0^T\EE^{\QQ}\left[M_t^u f(t,X_{t-}^u,\mu_t^u,u_t)
		\right]\drm t.
	\end{aligned}
	\]
	Moreover, $\EE^{\PP^u}[\Phi(X_T^u,\mu_T^u)]=\EE^{\QQ}[M_T^u\Phi(X_T^u,\mu_T^u)]$.
	Combining the above identities proves \eqref{eq:physicalcost}.
\end{proof}
	
For bounded Borel $g$, define the unnormalized conditional filter
\[
V_t^u(g)=\mathbb E\big[M_t^u g(X_t^u)\mid\mathcal F_t^Y\big].
\]
By the conditional Bayes formula,
\[
\pi_t^u(g)=\frac{V_t^u(g)}{V_t^u(1)}
=\mathbb E^{\mathbb P^u}\big[g(X_t^u)\mid\mathcal F_t^Y\big]
\quad \mathbb P^u\text{-a.s.},
\]
on the set $\{V_t^u(1)>0\}$. Since
\[
\mathbb P^u\big(V_t^u(1)=0\big)
=\mathbb E\big[M_t^u\mathbf 1_{\{V_t^u(1)=0\}}\big]
=\mathbb E\big[V_t^u(1)\mathbf 1_{\{V_t^u(1)=0\}}\big]=0,\] then this set has full $\mathbb P^u$-probability.
The weighted marginal satisfies $\langle\mu_t^u,g\rangle=\mathbb E\big[V_t^u(g)\big]$.
	
	\section{First-order variations}\label{sec:variation}
We establish differentiability of the state, likelihood and weighted marginal, derive the directional expansion of the cost functional, and characterize the first variation of the physical measure. Throughout this section, Assumptions~\ref{ass:state}--\ref{ass:regularity} are in force.

Fix $\bar u\in\mathcal U_{\text{ad}}$, with associated triple $(\bar X,\bar M,\bar\mu)$. For $v\in\mathcal U_{\text{ad}}$, set $h=v-\bar u$ and $u^\varepsilon=\bar u+\varepsilon h$ for $0\leq\varepsilon\leq1$. Convexity of $U$ together with the moment conditions on controls guarantees $u^\varepsilon\in\mathcal U_{\text{ad}}$.  We denote $(X^\varepsilon,M^\varepsilon,\mu^\varepsilon)$  as the corresponding solution triple.

All limits in this section are taken as $\varepsilon\downarrow0$, with $\bar u,v$ held fixed. An overbar on a coefficient indicates evaluation at $(t,\bar X_{t-},\bar\mu_t,\bar u_t)$. A prime marks an independent copy defined on $(\Omega',\mathcal F',\mathbb Q')$, and $\mathbb E'$ denotes expectation with respect only to this copy.
	
	\subsection{Variational equations and differentiability}
Based on	Theorem~\ref{thm:forward} , for every finite $p\geq2$, we have 
	\begin{equation}\label{eq:perturbation}
		\begin{aligned}
			\EE\sup_{t\leq T}|X_t^\varepsilon-\bX_t|^p
			+\EE\sup_{t\leq T}|M_t^\varepsilon-\bM_t|^p
			+\sup_{t\leq T}\rho(\mu_t^\varepsilon,\bmu_t)^p\leq C_p\varepsilon^p\EE\int_0^T|h_t|^p\dd t.
		\end{aligned}
	\end{equation}
	Here,  $C_p$ may depend on the moments of the two fixed controls.
	Define
	\[
	\chi_t^\varepsilon=\frac{X_t^\varepsilon-\bX_t}{\varepsilon},
	\qquad K_t^\varepsilon=\frac{M_t^\varepsilon-\bM_t}{\varepsilon},
	\qquad \mu_{1,t}^\varepsilon=\frac{\mu_t^\varepsilon-\bmu_t}{\varepsilon}.
	\]
	For $\varphi\in\{b,\sigma,\eta,\lambda_1,\ldots,\lambda_n,f\}$, let
	\begin{equation}\label{eq:Dmu}
		\Dmu\varphi(t;\xi,k)=\EE'\left[
		k_{t-}'\bar\varphi_\mu(t;\bX_{t-}')
		+\bM_{t-}'\bar\varphi_{\mu,1}(t;\bX_{t-}')\xi_{t-}'\right].
	\end{equation}
 Here, $\bar\varphi_\mu$ is evaluated at the \emph{unprimed} baseline coefficient arguments: only the evaluation variable and the variation get copied. For scalar coefficients, the last product is just the inner product with the gradient of the evaluation variable. For vector and matrix-valued coefficients, this is the Jacobian acting on $\xi'$. We use this simple contraction convention everywhere in the paper.
 
 Then we introduce the next lemma, which gives an identity that separates the likelihood quotient from the state displacement when we test the weighted marginal against a random kernel.
	
	\begin{lemma}
	\label{lem:kernel}
	Let $\Psi^\varepsilon(t,\omega;y)$ be a measurable random kernel, continuously differentiable in $y$, such that the following expressions are integrable. Then, for $\varepsilon>0$,
	\begin{equation}\label{eq:kernel} 
		\inner{\mu_{1,t}^\varepsilon}{\Psi^\varepsilon(t,\omega;\cdot)}=\EE'\left[K_t^{\varepsilon\prime}\Psi^\varepsilon(t,\omega;X_t^{\varepsilon\prime})\right]+\EE'\left[\bar M_t'\int_0^1\nabla_y\Psi^\varepsilon(t,\omega;\bar X_t'+a\varepsilon\chi_t^{\varepsilon\prime})\cdot\chi_t^{\varepsilon\prime}\,\drm a	\right].
	\end{equation}
	
\end{lemma}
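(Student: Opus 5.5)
The identity is an exact algebraic decomposition, so I will prove it by writing both weighted marginals through the defining relation $\langle\mu_t,\varphi\rangle=\EE[M_t\varphi(X_t)]$ on an independent copy $(\Omega',\mathcal F',\QQ')$. Since $\omega$ enters $\Psi^\varepsilon$ only as a frozen parameter, I will first fix $\omega$ and treat $\Psi^\varepsilon(t,\omega;\cdot)$ as a deterministic $C^1$ function $\psi$ on $\R^d$. The copies $(X^{\varepsilon\prime},M^{\varepsilon\prime})$ and $(\bX',\bM')$ have the same joint law as the originals under $\QQ$, so $\langle\mu_t^\varepsilon,\psi\rangle=\EE'[M_t^{\varepsilon\prime}\psi(X_t^{\varepsilon\prime})]$ and $\langle\bmu_t,\psi\rangle=\EE'[\bM_t'\psi(\bX_t')]$. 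Measurability of the resulting expressions in $\omega$ follows by Fubini on $\Omega\times\Omega'$, using the joint measurability of the kernel.

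Next I split the difference by adding and subtracting $\bM_t'\psi(X_t^{\varepsilon\prime})$:
\[
M_t^{\varepsilon\prime}\psi(X_t^{\varepsilon\prime})-\bM_t'\psi(\bX_t')
=(M_t^{\varepsilon\prime}-\bM_t')\psi(X_t^{\varepsilon\prime})
+\bM_t'\bigl(\psi(X_t^{\varepsilon\prime})-\psi(\bX_t')\bigr).
\]
Dividing by $\varepsilon$, the first term is $K_t^{\varepsilon\prime}\psi(X_t^{\varepsilon\prime})$. For the second term I write $X_t^{\varepsilon\prime}=\bX_t'+\varepsilon\chi_t^{\varepsilon\prime}$ and apply the fundamental theorem of calculus along the segment $a\mapsto\bX_t'+a\varepsilon\chi_t^{\varepsilon\prime}$. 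Because $\psi\in C^1$, this gives
\[
\psi(X_t^{\varepsilon\prime})-\psi(\bX_t')=\varepsilon\int_0^1\nabla_y\psi(\bX_t'+a\varepsilon\chi_t^{\varepsilon\prime})\cdot\chi_t^{\varepsilon\prime}\,\drm a .
\]
Dividing by $\varepsilon$ produces the integrand of the second expectation in \eqref{eq:kernel}.

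Finally I take $\EE'$ of each piece separately, which yields \eqref{eq:kernel} with $\langle\mu_{1,t}^\varepsilon,\psi\rangle=\varepsilon^{-1}(\langle\mu_t^\varepsilon,\psi\rangle-\langle\bmu_t,\psi\rangle)$. The only point needing care is that linearity of $\EE'$ requires each of the two pieces to be $\QQ'$-integrable, not merely their sum. This is exactly what the hypothesis "the following expressions are integrable" supplies. In the intended applications it can also be checked directly: $0\le M\le C_M$, $|K^\varepsilon|$ is controlled by \eqref{eq:perturbation}, and polynomial-growth bounds on $\Psi^\varepsilon$ and $\nabla_y\Psi^\varepsilon$ combine with the moment bounds of Theorem~\ref{thm:forward} to give integrability. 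I expect no genuine obstacle, since the statement is a bookkeeping identity once the independent-copy representation is in place.
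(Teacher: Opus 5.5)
Your proposal is correct and follows essentially the same route as the paper. Both fix $(t,\omega)$, write the quotient through the independent copy, add and subtract $\bar M_t'\Psi^\varepsilon(t,\omega;X_t^{\varepsilon\prime})$ to isolate $K_t^{\varepsilon\prime}$, and apply the fundamental theorem of calculus along the segment $\bar X_t'+a\varepsilon\chi_t^{\varepsilon\prime}$. Your explicit remark that each piece must be separately $\QQ'$-integrable is a slightly more careful rendering of the hypothesis that the paper leaves implicit.
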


\begin{proof}
	By the definition of $\mu_{1,t}^\varepsilon$, for fixed $(t,\omega)$,
	\begin{align}
		&\inner{\mu_{1,t}^\varepsilon}{\Psi^\varepsilon(t,\omega;\cdot)}\\
		&=\frac{1}{\varepsilon}\EE'\left[M_t^{\varepsilon\prime}\Psi^\varepsilon(t,\omega;X_t^{\varepsilon\prime})-\bar M_t'
		\Psi^\varepsilon(t,\omega;\bar X_t')\right]\\
		&=\EE'\left[K_t^{\varepsilon\prime}	\Psi^\varepsilon(t,\omega;X_t^{\varepsilon\prime})\right]+\EE'\left[
		\bar M_t'\frac{\Psi^\varepsilon(t,\omega;X_t^{\varepsilon\prime})-\Psi^\varepsilon(t,\omega;\bar X_t')}{\varepsilon}	\right].
	\end{align}
	Since $X_t^{\varepsilon\prime} =\bar X_t'+\varepsilon\chi_t^{\varepsilon\prime}$, then 
	\[   \frac{\Psi^\varepsilon(t,\omega;X_t^{\varepsilon\prime})-\Psi^\varepsilon(t,\omega;\bar X_t')}{\varepsilon}=\int_0^1\nabla_y\Psi^\varepsilon(t,\omega;\bar X_t'+a\varepsilon\chi_t^{\varepsilon\prime})
	\cdot\chi_t^{\varepsilon\prime}\,\drm a.
	\]
	Substituting this identity into the preceding
	equality yields \eqref{eq:kernel}.
\end{proof}

	\begin{remark}\label{rem:kernel}
		The same identity holds for vector-valued and $L^j(\nu)$-valued kernels,
		using the corresponding derivative contraction and Bochner integrals.
	\end{remark}
	
Now, we will introduce the following chain rule along probability mixtures. 		For $i=0,1$, let $x_i\in\R^d$, $\mu_i\in\mathcal P_4(\R^d)$,
	$u_i\in U$. Define  $\Delta x=x_1-x_0$, and $\Delta u=u_1-u_0$. Set
	\[
	x_\theta=x_0+\theta\Delta x,\quad
	u_\theta=u_0+\theta\Delta u,\quad
	\mu_\theta=(1-\theta)\mu_0+\theta\mu_1,\quad
	z_\theta=(x_\theta,\mu_\theta,u_\theta).
	\]
	
	\begin{lemma}\label{lem:flatchain}
		Under Assumption~\ref{ass:regularity}, for every coefficient $\varphi$,
		\begin{equation}\label{eq:flatchain}
				\varphi(t,z_1)-\varphi(t,z_0)
				= \int_0^1\bigl[\varphi_x(t,z_\theta)\Delta x
				+\varphi_u(t,z_\theta)\Delta u\bigr]\dd\theta+\int_0^1\int_{\R^d}\varphi_\mu(t,z_\theta;y)
				(\mu_1-\mu_0)(\mathrm dy)\dd\theta.
		\end{equation}
	 
	\end{lemma}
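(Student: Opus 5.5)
The identity follows by splitting the total increment into two pieces: move $(x,u)$ with the measure frozen at $\mu_1$, then move the measure with $(x,u)$ frozen at $(x_0,u_0)$. Each piece is handled by a one-variable integration, after which the mixed-path integrand is recovered using continuity of the derivatives.

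\textbf{Step 1: joint function of $\theta$.} Set $g(\theta)=\varphi(t,z_\theta)$ for $\theta\in[0,1]$, and aim to show that $g$ is absolutely continuous with
\[
g'(\theta)=\varphi_x(t,z_\theta)\Delta x+\varphi_u(t,z_\theta)\Delta u+\int_{\R^d}\varphi_\mu(t,z_\theta;y)(\mu_1-\mu_0)(\mathrm dy).
\]
Integrating $g'$ over $[0,1]$ then gives \eqref{eq:flatchain}.

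\textbf{Step 2: derivative in the measure direction.} Along the mixture, $\mu_{\theta+s}-\mu_\theta=s(\mu_1-\mu_0)$, and the segment from $\mu_\theta$ to $\mu_{\theta+s}$ stays inside the mixture family. Applying \eqref{eq:flat} between $\mu_\theta$ and $\mu_{\theta+s}$ gives
\[
\frac{\varphi(t,x,\mu_{\theta+s},u)-\varphi(t,x,\mu_\theta,u)}{s}=\int_0^1\int_{\R^d}\varphi_\mu(t,x,\mu_{\theta+rs},u;y)(\mu_1-\mu_0)(\mathrm dy)\,\mathrm dr.
\]
Every measure in the family has $m_4\le m_4(\mu_0)+m_4(\mu_1)$. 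Moreover $W_2(\mu_{\theta+rs},\mu_\theta)\to0$ as $s\to0$, since the two measures differ by a mass $|rs|$ with bounded fourth moment. The integrand is dominated by $C(1+|y|)$ or $CA_2(1+|y|^2)$, which is integrable against $|\mu_1-\mu_0|$ because both measures have finite fourth moments. By the uniform continuity in Assumption~\ref{ass:regularity}(iii) on bounded sets and dominated convergence, the $\mu$-directional derivative is therefore $\int\varphi_\mu(t,z_\theta;y)(\mu_1-\mu_0)(\mathrm dy)$.

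\textbf{Step 3: joint increment and the main obstacle.} Write the joint increment $g(\theta+s)-g(\theta)$ as the sum of two parts: a change in $(x,u)$ at the frozen measure $\mu_{\theta+s}$, and a change in $\mu$ at the frozen point $(x_\theta,u_\theta)$. The first part equals, by the fundamental theorem of calculus in $(x,u)$ (the $C^1$ extension in $u$ handles the boundary of $U$),
\[
s\int_0^1\bigl[\varphi_x\Delta x+\varphi_u\Delta u\bigr](t,x_{\theta+rs},\mu_{\theta+s},u_{\theta+rs})\,\mathrm dr.
\]
The main obstacle is justifying the passage to the limit with the measure argument also moving. This requires joint continuity of $\varphi_x,\varphi_u,\varphi_\mu$ in $(x,u,\mu)$, which is exactly what (iii) supplies on the bounded set where $|x|+|u|+m_4(\mu)\le R$. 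Dividing by $s$ and letting $s\to0$ then yields the formula for $g'$ in Step 1, and $g'$ is continuous on $[0,1]$.

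For $\eta$ the same argument runs with $B_p$-valued Fréchet derivatives and Bochner integrals. For $\Phi$ and $f$ the quadratic growth bounds in \eqref{eq:costderivatives} together with the fourth moments provide the domination.

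\textbf{Step 4: conclude.} Since $g$ is $C^1$ on $[0,1]$, $g(1)-g(0)=\int_0^1g'(\theta)\,\mathrm d\theta$. Expanding $g'$ with the formula from Step 1 gives exactly the two integrals on the right-hand side of \eqref{eq:flatchain}.
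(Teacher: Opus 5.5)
Your Steps 1--4 are essentially the paper's proof. The paper also differentiates $F(\theta)=\varphi(t,z_\theta)$ by splitting the difference quotient into an $(x,u)$-increment at the frozen measure $\mu_{\theta+h}$ plus a measure increment at the frozen point $(x_\theta,u_\theta)$ via \eqref{eq:flat}. It then uses $W_2(\mu_\theta,\mu_{\theta+h})\le |h|^{1/2}W_2(\mu_0,\mu_1)$ (from an explicit coupling) and $m_4(\mu_\theta)\le\max\{m_4(\mu_0),m_4(\mu_1)\}$ to invoke Assumption~\ref{ass:regularity}(iii) and dominated convergence, and integrates the continuous derivative over $[0,1]$.

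One correction to your framing: the global split in your opening sentence (move $(x,u)$ at $\mu_1$, then $\mu$ at $(x_0,u_0)$) would give integrands at $(x_\theta,\mu_1,u_\theta)$ and $(x_0,\mu_\theta,u_0)$ rather than at $z_\theta$. Continuity alone cannot convert these, so it is the local split of your Step 3, not the global one, that proves the identity.
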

	\begin{proof}
		Fix $t$ and let $F(\theta)=\varphi(t,z_\theta)$.
		For $\theta,\theta+h\in[0,1]$, the fundamental theorem of calculus in
		$(x,u)$ and \eqref{eq:flat} give
		\begin{equation}\label{eq:quotient}
		\begin{aligned}
			\frac{F(\theta+h)-F(\theta)}h
			= &\int_0^1\bigl[
			\varphi_x(t,x_\theta+ah\Delta x,\mu_{\theta+h},u_\theta+ah\Delta u)\Delta x\\
			&\hspace{17mm}+
			\varphi_u(t,x_\theta+ah\Delta x,\mu_{\theta+h},u_\theta+ah\Delta u)\Delta u
			\bigr]\dd a\\
			&+\int_0^1\int_{\R^d}
			\varphi_\mu(t,x_\theta,\mu_{\theta+ah},u_\theta;y)
			(\mu_1-\mu_0)(\mathrm dy)\dd a.
		\end{aligned}
		\end{equation}
	To verify continuity of the measure path in $W_2$, let $h>0$ and choose an optimal coupling $\pi\in\Pi(\mu_0,\mu_1)$. We define
	\[
	\pi_{\theta,h}=(1-\theta-h)(\mathrm{Id},\mathrm{Id})_\#\mu_0
	+\theta(\mathrm{Id},\mathrm{Id})_\#\mu_1+h\pi,
	\]
	which has marginals $\mu_\theta$ and $\mu_{\theta+h}$. Hence
	\[
	W_2(\mu_\theta,\mu_{\theta+h})^2
	\le\int|x-y|^2\pi_{\theta,h}(\drm x,\drm y)
	=hW_2(\mu_0,\mu_1)^2.
	\]
	Interchanging endpoints gives the analogous bound for $h<0$. Moreover,
		\[
	m_4(\mu_\theta)=(1-\theta)m_4(\mu_0)+\theta m_4(\mu_1)
	\le\max\{m_4(\mu_0),m_4(\mu_1)\}.
	\]
		By Assumption~\ref{ass:regularity}(iii), the first integral in
	the difference quotient \eqref{eq:quotient} converges to
	$\phi_x(t,z_\theta)\Delta x+\phi_u(t,z_\theta)\Delta u$.
	For the measure term, the same Wasserstein estimate gives
	\[
	\sup_{a\in[0,1]}W_2(\mu_{\theta+ah},\mu_\theta)
	\le |h|^{1/2}W_2(\mu_0,\mu_1)\longrightarrow0.
	\]
	Combining with the fixed segment, based on the growth bound in Assumption~\ref{ass:regularity}, we have  
	$|\phi_\mu(t,x_\theta,\mu_{\theta+ah},u_\theta;y)|
	\le C(1+|y|^2)$. Since
	\[
	\int_{\R^d}(1+|y|^2)|\mu_1-\mu_0|(\drm y)
	\le 2+m_2(\mu_0)+m_2(\mu_1)<\infty,
	\]
we apply the dominated convergence theorem to the measure integral in \eqref{eq:quotient}. Therefore,

	\[
	F'(\theta)=\phi_x(t,z_\theta)\Delta x+
	\phi_u(t,z_\theta)\Delta u+
	\int_{\R^d}\phi_\mu(t,z_\theta;y)
	(\mu_1-\mu_0)(\drm y).
	\]
Also based on Assumption \ref{ass:regularity}, we derive that  $F'$ is continuous in
	$\theta$, with one-sided derivatives at the endpoints.
Thus,	 integrating $F'$ over $[0,1]$ proves \eqref{eq:flatchain}.
 
	\end{proof}
	
\begin{remark}\label{rem:banachchain}
	For the jump coefficient $\eta$, we work in the Banach space $
	B_p=L^2(\nu)\cap L^p(\nu),$ with norm  $
	\|g\|_{B_p}=\|g\|_{L^2(\nu)}+\|g\|_{L^p(\nu)}. $
	The derivatives with respect to $x$ and $u$ are continuous
	in this norm, so the corresponding terms follow from the
	fundamental theorem of calculus.
	
	For the measure term, the difference of the measure derivatives
	in the preceding proof converges to zero in $B_p$ by the
	Wasserstein convergence established above. Its $B_p$-norm is
	bounded by $2C_p(1+|y|)$, which is integrable with respect to
	$|\mu_1-\mu_0|$. Therefore,  using dominated convergence  theorem to
	the Bochner integral,  \eqref{eq:flatchain} holds in $B_p$.
\end{remark}

	For $\varphi=b,\sigma,\eta,\lambda_k$, let
	\[
	D\varphi_t=\bar\varphi_x(t)\chi_{t-}
	+\Dmu\varphi(t;\chi,K)+\bar\varphi_u(t)h_t.
	\]
	The variational equations are
	\begin{equation}\label{eq:chi}
		\left\{
		\begin{aligned}
			\mathrm d\chi_t&=Db_t\dd t+D\sigma_t\dd W_t
			+\int_E D\eta_t(\zeta)\Nt(\mathrm d\zeta,\mathrm dt),\\
			\chi_0&=0,
		\end{aligned}\right.
	\end{equation}
	and
	\begin{equation}\label{eq:K}
		\left\{
		\begin{aligned}
			\mathrm dK_t&=\sum_{k=1}^n\left[
			\left(\frac{\bar\lambda_k(t)}c-1\right)K_{t-}
			+\frac{\bM_{t-}}cD\lambda_k(t)\right]\dd\Yt_k(t),\\
			K_0&=0.
		\end{aligned}\right.
	\end{equation}
		Then, we present the following Proposition to give  a unique solution $(\chi,K)$ with every finite $\mathcal S^p$ moment and $\EE K_t=0$. 
	\begin{proposition}\label{prop:variation}
		Equations~\eqref{eq:chi}--\eqref{eq:K} have a unique adapted solution in
		$\Sp^2\times\Sp^2$. For every finite $p\geq2$,
		\begin{equation}\label{eq:variationmoment}
			\EE\sup_{t\leq T}|\chi_t|^p+\EE\sup_{t\leq T}|K_t|^p
			\leq C_p\EE\int_0^T|h_t|^p\dd t.
		\end{equation}
		Moreover, $K$ is a martingale and $\EE K_t=0$.
	\end{proposition}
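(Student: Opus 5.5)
We treat \eqref{eq:chi}--\eqref{eq:K} as one linear McKean--Vlasov system for the pair $(\chi,K)$ and solve it by a Picard/contraction argument in $\Sp^p\times\Sp^p$, in the same way as Lemma~\ref{lem:decoupled} and Theorem~\ref{thm:forward}. The system is linear in $(\chi,K)$, and its nonlocal part is the term $\Dmu\varphi(t;\chi,K)$ defined in \eqref{eq:Dmu}. The only new point is a pointwise estimate on this term. For $\varphi\in\mathcal C$, the bounds \eqref{eq:boundedderivatives} and Cauchy--Schwarz on the independent copy give
\[
|\Dmu\varphi(t;\chi,K)|\le C\,\EE'\bigl[|K'_{t-}|(1+|\bX'_{t-}|)\bigr]+C\,C_M\EE'|\chi'_{t-}|
\le C_R\bigl(\|K_{t-}\|_{L^2}+\|\chi_{t-}\|_{L^2}\bigr).
\]
Here $R$ bounds $\EE\sup_t|\bX_t|^2$, which is finite by Theorem~\ref{thm:forward}, and $0\le\bM\le C_M$ by Lemma~\ref{lem:boundedlikelihood}. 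For $\eta$ we obtain the same bound in the $L^2(\nu)$ and $L^p(\nu)$ norms from \eqref{eq:etaderivatives}, using Bochner integrals as in Remark~\ref{rem:banachchain}. The bound is deterministic, and its right-hand side is dominated by $(\EE\sup_{s\le t}|\chi_s|^p+\EE\sup_{s\le t}|K_s|^p)^{1/p}$ for any $p\ge2$.

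\textbf{Step 1 (a priori bound).} Apply Lemma~\ref{lem:maximal}, \eqref{eq:combinedmaximal}, to both equations. The local coefficients are controlled as follows. The terms $\bar\varphi_x\chi_{t-}$ and $\bar\varphi_u h_t$ are bounded using \eqref{eq:boundedderivatives}--\eqref{eq:etaderivatives}. In the $K$ equation, $|\bar\lambda_k/c-1|\le1$ and $\bM_{t-}/c\le C_M$, so $|(\bar\lambda_k/c-1)K_{t-}+\bM_{t-}D\lambda_k/c|\le |K_{t-}|+C(|\chi_{t-}|+|\Dmu\lambda_k|+|h_t|)$. Setting $G(t)=\EE\sup_{s\le t}|\chi_s|^p+\EE\sup_{s\le t}|K_s|^p$, we obtain
\[
G(t)\le C_p\int_0^t G(s)\,\drm s+C_p\EE\int_0^t|h_s|^p\,\drm s .
\]
Gronwall's inequality then gives \eqref{eq:variationmoment}. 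The right-hand side is finite for every $p$ because $h=v-\bu$ and both controls are admissible.

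\textbf{Step 2 (existence and uniqueness).} Define the Picard map $(\chi^{(j)},K^{(j)})\mapsto(\chi^{(j+1)},K^{(j+1)})$ by freezing all right-hand sides of \eqref{eq:chi}--\eqref{eq:K}, including the $\Dmu$ terms, at step $j$. Because the system is linear, differences of iterates satisfy the estimate of Step 1 with $h=0$. This gives $a_{j+1}(t)\le C\int_0^ta_j$, hence the factorial bound $a_j(T)\le (CT)^ja_0(T)/j!$, and therefore the iterates are Cauchy in $\Sp^p$. To pass to the limit in the $\Dmu$ terms we use the bound above, since $\Dmu$ is Lipschitz in $L^2$. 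Uniqueness in $\Sp^2$ follows from the $p=2$ difference estimate and Gronwall's inequality.

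\textbf{Step 3 (martingale property).} $K$ is a stochastic integral against the compensated Poisson processes $\Yt_k$, and by Step 1 its integrand satisfies $\EE\int_0^T|\cdot|^2\,\drm t<\infty$. Hence $K$ is a square-integrable martingale with $K_0=0$, so $\EE K_t=0$.

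We expect the main obstacle to be the bookkeeping of the mean-field term. Two things need care. First, the unprimed baseline arguments inside $\bar\varphi_\mu$ are random; this is harmless because the bounds \eqref{eq:boundedderivatives} are uniform in those arguments. Second, the growth $C(1+|y|)$ of $\varphi_\mu$ forces the Cauchy--Schwarz pairing of $K'$ with $1+|\bX'|$, which needs a uniform second moment of $\bX$. For $\eta$ the same argument must be carried out in the $B_p$ norm so that the $\|a\|_p^p$ term in \eqref{eq:poissonmaximal} is controlled.
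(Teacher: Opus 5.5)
Your proposal is correct and follows the paper's proof essentially step for step. Both use the same deterministic $L^2$ bound on $D_\mu\varphi$ (and on $D_\mu\eta$ in $L^2(\nu)\cap L^p(\nu)$), Picard iteration with factorial bounds in $\mathcal S^p$, Gronwall for the moment estimate, the $p=2$ difference estimate for uniqueness, and square-integrability of the $\widetilde Y_k$-integrand for the martingale property. One small bookkeeping point: Gronwall in your Step 1 needs $G(t)<\infty$, so apply it to the $\mathcal S^p$ fixed point from Step 2, which coincides with the $\mathcal S^2$ solution by uniqueness, as the paper does.
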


\begin{proof}
	Fix $p\ge2$ and let $\|Z\|_q=(\EE|Z|^q)^{1/q}$ for the random variables under $\QQ$. By Theorem~\ref{thm:forward},
	$B_*:=\sup_{t\le T}\|1+|\bar X_{t-}|\|_2<\infty$, and $0\le\bar M_t\le\CM$.

	For any $\varphi\in\{b,\sigma,\lambda_1,\ldots,\lambda_n\}$, based on Assumption~\ref{ass:regularity}, by using Cauchy--Schwarz
	inequality yield,  we have 
	\[  |\EE'[k_{t-}'\bar\varphi_\mu(t;\bar X_{t-}')]|\le C\EE'[|k_{t-}'|(1+|\bar X_{t-}'|)]\le CB_*\|k_{t-}\|_2,\] and \[
	|\EE'[\bar M_{t-}'\bar\varphi_{\mu,1}(t;\bar X_{t-}')\xi_{t-}']|\le C\CM\EE'|\xi_{t-}'|\le C\CM\|\xi_{t-}\|_2.\]
	Therefore,  combining with \eqref{eq:Dmu},
	\begin{equation}\label{eq:Dmuestimate}
		|D_\mu\varphi(t;\xi,k)|\le C\Bigl[\|1+|\bar X_{t-}|\|_2\|k_{t-}\|_2+\CM\|\xi_{t-}\|_2\Bigr]\le C\bigl(B_*\|k_{t-}\|_2+\CM\|\xi_{t-}\|_2\bigr).
	\end{equation}
	
	For the jump coefficient, by using Minkowski's inequality and combining with \eqref{eq:etaderivatives}, we have for $j\in\{2,p\}$,
	\begin{align}
		\|D_\mu\eta(t;\xi,k)\|_{L^j(\nu)}&\le\EE'\bigl[|k_{t-}'|\|\bar\eta_\mu(t;\bar X_{t-}',\cdot)\|_{L^j(\nu)}\bigr]+\EE'\bigl[\bar M_{t-}'|\xi_{t-}'|
		\|\bar\eta_{\mu,1}(t;\bar X_{t-}',\cdot)\|_{L^j(\nu)}\bigr]\\
		&\le C_j\bigl(B_*\|k_{t-}\|_2+\CM\|\xi_{t-}\|_2\bigr).
	\end{align}
	Since $\|Z\|_2\le\|Z\|_p$ for $p\ge2$, it follows that $\EE|D_\mu\varphi(t;\xi,k)|^p\le C_p\EE\bigl(|\xi_{t-}|^p+|k_{t-}|^p\bigr)$, and $
	\EE\left[\|D_\mu\eta(t;\xi,k)\|_{L^2(\nu)}^p+\|D_\mu\eta(t;\xi,k)\|_{L^p(\nu)}^p\right]\le C_p\EE\bigl(|\xi_{t-}|^p+|k_{t-}|^p\bigr)$.
	And based on the 	linearity of $D_\mu\varphi$,	the same bounds hold for differences of two pairs.

	For $\varphi\in\{b,\sigma,\eta,\lambda_1,\ldots,\lambda_n\}$, let $\mathcal L_\varphi(t;\xi,k)=\bar\varphi_x(t)\xi_{t-}+D_\mu\varphi(t;\xi,k)$, and $
	G_\varphi(t;\xi,k;h)=\mathcal L_\varphi(t;\xi,k)+\bar\varphi_u(t)h_t$. For each observation channel $r$, also let 
	\[Q_r(t;\xi,k;h)=	\left(\frac{\bar\lambda_r(t)}c-1\right)k_{t-}+\frac{\bar M_{t-}}cG_{\lambda_r}(t;\xi,k;h).
	\]
	Then, combining with the bounds on the derivatives, $0\le\bar\lambda_r\le c$, and $0\le\bar M\le\CM$, we have 	for $\varphi\in\{b,\sigma,\lambda_1,\ldots,\lambda_n\}$ and $j\in\{2,p\}$,
	\[
	\begin{aligned}
		|G_\varphi(t;\xi,k;h)|&\le C(|\xi_{t-}|+|h_t|)+C(B_*\|k_{t-}\|_2+\CM\|\xi_{t-}\|_2),\\
		\|G_\eta(t;\xi,k;h)\|_{L^j(\nu)}&\le C_j(|\xi_{t-}|+|h_t|)+C_j(B_*\|k_{t-}\|_2+\CM\|\xi_{t-}\|_2),\\
		|Q_r(t;\xi,k;h)|&\le |k_{t-}|+\frac{\CM}{c}|G_{\lambda_r}(t;\xi,k;h)|.
	\end{aligned}
	\]
	Using $\|Z\|_2\le\|Z\|_p$, we obtain
	\begin{align*}
		&\EE\biggl[|G_b(t;\xi,k;h)|^p+|G_\sigma(t;\xi,k;h)|^p
		+\|G_\eta(t;\xi,k;h)\|_{L^2(\nu)}^p\biggr]\\
		&\quad+\EE\biggl[\|G_\eta(t;\xi,k;h)\|_{L^p(\nu)}^p
		+\sum_{r=1}^n|Q_r(t;\xi,k;h)|^p\biggr]\\
		&\quad\le C_p\EE\bigl(|\xi_{t-}|^p+|k_{t-}|^p+|h_t|^p\bigr).
	\end{align*}
	
	Let $(\mathcal X,\mathcal K)$ be defined by
	\[ \mathcal X_t=\int_0^tG_b(s;\xi,k;h)\,\drm s
	+\int_0^tG_\sigma(s;\xi,k;h)\,\drm W_s+\int_0^t\int_EG_\eta(s;\xi,k;h,\zeta)\Nt(\drm\zeta,\drm s),\]
	and \[
	\mathcal K_t=\sum_{r=1}^n\int_0^tQ_r(s;\xi,k;h)\,\drm\Yt_r(s). 
	\]
	Since the baseline coefficients and the left limits of the input processes are predictable, and the
	independent-copy expectations preserve this property, the integrands are predictable. Thus, by  using Lemma~\ref{lem:maximal} and H\"older's inequality in time, we have that 
	\[ \EE\sup_{s\le t}|\mathcal X_s|^p\le C_p\EE\int_0^t\bigl[|G_b|^p+|G_\sigma|^p+\|G_\eta\|_{L^2(\nu)}^p+\|G_\eta\|_{L^p(\nu)}^p\bigr]\,\drm s,\]
	and \[\EE\sup_{s\le t}|\mathcal K_s|^p\le C_p\Bigg\{\EE\left(	\int_0^t c\sum_{r=1}^n|Q_r|^2\,\drm s	\right)^{p/2}
	+c\EE\int_0^t\sum_{r=1}^n|Q_r|^p\,\drm s\Bigg\}\le C_p\EE\int_0^t\sum_{r=1}^n|Q_r|^p\,\drm s.
	\]
	Consequently,
	\begin{equation}\label{eq:variationPicardbound}
		\EE\sup_{s\le t}\bigl(|\mathcal X_s|^p+|\mathcal K_s|^p\bigr)\le C_p\int_0^t
		\EE\bigl(|\xi_{s-}|^p+|k_{s-}|^p+|h_s|^p\bigr)\,\drm s.
	\end{equation}
	By linearity of $D_\mu\varphi$ and cancellation of the fixed
	source $h$, the same estimate holds for differences of two outputs,
	with $(\xi,k)$ replaced by $(\xi-\widehat\xi,k-\widehat k)$ and without $h$.
	
	Starting from $(\chi^0,K^0)=(0,0)$, define 
	\[
	(\chi^{j+1},K^{j+1})=(\mathcal X(\chi^j,K^j),\mathcal K(\chi^j,K^j)),
	\qquad j\in \NN.
	\]
	Since $h\in L^p(\drm t\otimes d\QQ)$, the preceding estimate
	gives
	\[
	\EE\sup_{t\le T}\bigl(|\chi_t^1|^p+|K_t^1|^p\bigr)
	\le C_p\EE\int_0^T|h_t|^p\,\drm t<\infty.
	\]
	Inductively, $(\chi^j,K^j)\in\mathcal S^p\times\mathcal S^p$
	for every $j$. Define
	\[
	D_j(t)=\EE\sup_{s\le t}\bigl(
	|\chi_s^{j+1}-\chi_s^j|^p
	+|K_s^{j+1}-K_s^j|^p\bigr).
	\]
	The difference estimate corresponding to
	\eqref{eq:variationPicardbound} gives that for any $j\in\NN$, $
	D_{j+1}(t)\le C_p\int_0^tD_j(s)\,\drm s.$
	
	Therefore, $D_j(T)\le D_0(T)(C_pT)^j/j!$, which implies
	\[
	\sum_{j=0}^\infty D_j(T)^{1/p}
	\le D_0(T)^{1/p}\sum_{j=0}^\infty
	\frac{(C_pT)^{j/p}}{(j!)^{1/p}}<\infty.
	\]
	By the triangle inequality, $(\chi^j,K^j)$ is Cauchy
	in $\mathcal S^p\times\mathcal S^p$.
	Denote its limit by $(\chi,K)$. Applying the difference estimate
	once more, we have 
	\[
	\begin{aligned}
		&\EE\sup_{t\le T}|\mathcal X_t(\chi^j,K^j)-\mathcal X_t(\chi,K)|^p+\EE\sup_{t\le T}|\mathcal K_t(\chi^j,K^j)-\mathcal K_t(\chi,K)|^p\\
		&\quad\le C_pT\bigl(\|\chi^j-\chi\|_{\mathcal S^p}^p
		+\|K^j-K\|_{\mathcal S^p}^p\bigr)\to0.
	\end{aligned}
	\]
	Passing to the limit in the iteration equations yields
	$(\chi,K)=(\mathcal X(\chi,K),\mathcal K(\chi,K))$,
	which is precisely \eqref{eq:chi}--\eqref{eq:K}.
	
	Applying \eqref{eq:variationPicardbound} to this fixed point and
	setting $F_p(t)=\EE\sup_{s\le t}\bigl(|\chi_s|^p+|K_s|^p\bigr),$  we obtain
	\[
	F_p(t)\le C_p\int_0^t F_p(s)\,\drm s
	+C_p\EE\int_0^t|h_s|^p\,\drm s.
	\]
	Then, by using Gronwall's inequality,
	\[
	F_p(T)\le C_pe^{C_pT}\EE\int_0^T|h_s|^p\,\drm s,
	\]
	which is \eqref{eq:variationmoment} after changing $C_p$.
	If $(\widehat\chi,\widehat K)$ is another square-integrable
	solution with the same $h$, the difference estimate at $p=2$ gives
	\[
	\EE\sup_{s\le t}\bigl(
	|\chi_s-\widehat\chi_s|^2
	+|K_s-\widehat K_s|^2\bigr)
	\le C\int_0^t\EE\sup_{r\le s}\bigl(
	|\chi_r-\widehat\chi_r|^2
	+|K_r-\widehat K_r|^2\bigr)\,\drm s.
	\]
	Then, by using Gronwall's inequality again, we obtain $\chi=\widehat\chi$ and $K=\widehat K$. In particular, the solutions constructed for different
	finite values of $p\ge2$ coincide.
	
	Finally, the estimate for $Q_r$ with $p=2$ implies
	\[ \EE\int_0^T c\sum_{r=1}^n
	\left|\left(\frac{\bar\lambda_r(s)}c-1\right)K_{s-}
	+\frac{\bar M_{s-}}cD\lambda_r(s)\right|^2\,\drm s\le C\EE\int_0^T
	\bigl(|\chi_{s-}|^2+|K_{s-}|^2+|h_s|^2\bigr)\,\drm s
	<\infty. 
	\]
	Therefore the stochastic integral in \eqref{eq:K} is a square-integrable $\QQ$-martingale. Since $K_0=0$,
	$\EE K_t=0$ for every $t\in[0,T]$.
\end{proof}
	
	Fix an arbitrary $\kappa\geq0$. For $j=1,2$, define
	\[
	\mathcal T_j^\kappa=
	\left\{\psi\in C^j(\R^d):
	\|\psi\|_{j,\kappa}:=
	\sup_{y\in\R^d}\frac{\sum_{a=0}^j|D^a\psi(y)|}{1+|y|^\kappa}<\infty
	\right\}.
	\]
	For $\kappa=0$, use $|y|^0=1$ also at $y=0$.
	Define
	\begin{equation}\label{eq:lawvariation}
		\langle\dot\mu_t,\psi\rangle
		=\EE\left[K_t\psi(\bX_t)+\bM_t\nabla\psi(\bX_t)\cdot\chi_t\right],
		\qquad\psi\in\mathcal T_1^\kappa.
	\end{equation}
The moment estimates make this a continuous linear functional on
$\mathcal T_1^\kappa$, and $\langle\dot\mu_t,1\rangle=0$.
Constants in estimates involving these spaces may depend on the fixed
$\kappa$. We write $\dot\mu_t$ for this functional. We do not assume the state marginal admits a signed-measure or total-variation derivative.

	Let $X_\theta^\varepsilon=\bX+\theta\varepsilon\chi^\varepsilon$,
	$\mu_\theta^\varepsilon=(1-\theta)\bmu+\theta\mu^\varepsilon$, and
	$u_\theta^\varepsilon=\bu+\theta\varepsilon h$.
	A superscript $(\varepsilon,\theta)$ on a coefficient derivative means
	evaluation at these arguments. Define
	\[
	\Delta_\varepsilon\varphi(t)
	=\frac{\varphi(t,X_{t-}^\varepsilon,\mu_t^\varepsilon,u_t^\varepsilon)
		-\bar\varphi(t)}\varepsilon.
	\]
	Based on Lemma~\ref{lem:flatchain}, applied with the two endpoint probability measures, we have 
	\begin{equation}\label{eq:chainquotient}
		\Delta_\varepsilon\varphi(t)=\int_0^1
		\left[\varphi_x^{\varepsilon,\theta}(t)\chi_{t-}^\varepsilon
		+\varphi_u^{\varepsilon,\theta}(t)h_t
		+\langle\mu_{1,t}^\varepsilon,
		\varphi_\mu^{\varepsilon,\theta}(t;\cdot)\rangle\right]
		\dd\theta.
	\end{equation}
	Define the baseline linear operator $\mathcal L_\varphi(t;\xi,k)=\bar\varphi_x(t)\xi_{t-}+D_\mu\varphi(t;\xi,k)$. Applying Lemma~\ref{lem:kernel} to \eqref{eq:chainquotient} yields
\begin{equation}\label{eq:exactremainder}
	\Delta_\varepsilon\varphi-D\varphi
	=\mathcal L_\varphi(t;\chi^\varepsilon-\chi,K^\varepsilon-K)
	+r_\varphi^\varepsilon,
\end{equation}
where the remainder is 
\begin{align}
	r_\varphi^\varepsilon=&\int_0^1(\varphi_x^{\varepsilon,\theta}-\bar\varphi_x)\chi^\varepsilon\,d\theta
	+\int_0^1(\varphi_u^{\varepsilon,\theta}-\bar\varphi_u)h\,d\theta+\int_0^1\EE'\left[K^{\varepsilon\prime}
	\bigl(\varphi_\mu^{\varepsilon,\theta}(X^{\varepsilon\prime})
	-\bar\varphi_\mu(\bar X')\bigr)\right]d\theta\notag\\
	&+\int_0^1\int_0^1\EE'\left[\bar M'
	\bigl(\varphi_{\mu,1}^{\varepsilon,\theta}
	(\bar X'+a\varepsilon\chi^{\varepsilon\prime})
	-\bar\varphi_{\mu,1}(\bar X')\bigr)
	\chi^{\varepsilon\prime}\right]da\,d\theta.
	\label{eq:remainderformula}
\end{align}
Identity \eqref{eq:exactremainder} decomposes the coefficient error into a bounded linear operator acting on the process error plus a remainder term. The subsequent estimate controls the size of this remainder via our perturbation bounds, and it does not rely on convergence of the difference quotients.
 
	\begin{proposition}\label{prop:remainder}
		For every finite $p\geq2$ and $\varphi=b,\sigma,\lambda_k$,
		\begin{equation}\label{eq:remainderconvergence}
			\EE\int_0^T|r_\varphi^\varepsilon(t)|^p\dd t\longrightarrow0.
		\end{equation}
		For $\eta$, the same convergence holds in $L^2(\nu)\cap L^p(\nu)$.
	\end{proposition}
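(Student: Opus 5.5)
The plan is to bound each of the four terms in \eqref{eq:remainderformula} separately. Each is a product of a coefficient-derivative difference and a quantity whose $L^q$ norms stay bounded uniformly in $\varepsilon$. The difference goes to zero in probability and obeys a polynomial growth bound, and the product is then handled by Hölder's inequality combined with a truncation / uniform-continuity argument.

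\textbf{Uniform bounds and convergence of the arguments.} By \eqref{eq:perturbation}, $\sup_\varepsilon\EE\sup_t(|\chi^\varepsilon_t|^q+|K^\varepsilon_t|^q)\le C_q$ for every finite $q$. Moreover $X^\varepsilon_\theta\to\bX$ and $X^\varepsilon\to\bX$ in $\Sp^q$, and $u^\varepsilon_\theta\to\bu$ in $L^q(\drm t\otimes\QQ)$. For the measure argument, \eqref{eq:perturbation} gives $\sup_t\rho(\mu^\varepsilon_t,\bmu_t)\le C\varepsilon$. By \eqref{eq:forwardmoment}, the moments $m_q(\mu^\varepsilon_t)$ are uniformly bounded, so Lemma~\ref{lem:wasserstein} with $q>2$ yields $\sup_tW_2(\mu^\varepsilon_t,\bmu_t)\to0$. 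Mixtures keep these bounds: $m_4(\mu^\varepsilon_{\theta,t})\le\max\{m_4(\bmu_t),m_4(\mu^\varepsilon_t)\}$, and the coupling in Lemma~\ref{lem:flatchain} gives $W_2(\mu^\varepsilon_{\theta,t},\bmu_t)\le W_2(\mu^\varepsilon_t,\bmu_t)$. Hence every argument fed into the derivatives converges, uniformly in $\theta,a\in[0,1]$.

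\textbf{First two terms.} Fix $R$ and split on the event $\{|\bX_{t-}|+|\bu_t|+\varepsilon|\chi^\varepsilon_{t-}|+\varepsilon|h_t|\le R\}$. On this event, Assumption~\ref{ass:regularity}(iii) makes $|\varphi^{\varepsilon,\theta}_x-\bar\varphi_x|$ small once $\varepsilon$ is small (uniform continuity on the $R$-ball with $m_4\le R$). Off the event, we use the bound $|\varphi_x|\le C$ from \eqref{eq:boundedderivatives}, and the probability of this event is small uniformly in $\varepsilon$ by Chebyshev. Hölder's inequality with exponents $2p$ and $2p$ against $\EE\int|\chi^\varepsilon|^{2p}$ and $\EE\int|h|^{2p}$ then gives an $\limsup$ that is $o(1)$ as $R\to\infty$. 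An equivalent route is to show the integrand tends to zero in $\drm t\otimes\QQ$-measure and is bounded, then conclude by uniform integrability.

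\textbf{Measure terms.} For the third term, Jensen in $\EE'$ gives the bound
\[
|\cdot|^p\le \EE'[|K^{\varepsilon\prime}|^{2}]^{p/2}\,\EE'[|\varphi^{\varepsilon,\theta}_\mu(X^{\varepsilon\prime})-\bar\varphi_\mu(\bar X')|^2]^{p/2},
\]
and the first factor is uniformly bounded. In the second factor we write the difference as
\[
[\varphi^{\varepsilon,\theta}_\mu(X^{\varepsilon\prime})-\varphi^{\varepsilon,\theta}_\mu(\bar X')]+[\varphi^{\varepsilon,\theta}_\mu(\bar X')-\bar\varphi_\mu(\bar X')].
\]
The first bracket is at most $C\varepsilon|\chi^{\varepsilon\prime}|$, using $|\varphi_{\mu,1}|\le C$. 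The second tends to zero pointwise by uniform continuity, where the unprimed arguments $(\bX_{t-}(\omega),\bu_t(\omega))$ are fixed inside the $\EE'$ integral, and it is dominated by $C(1+|\bar X'|)$. So for $(t,\omega)$-a.e. point the inner expectation tends to zero by dominated convergence, and the outer $\EE\int$ again by dominated convergence, using the uniform bound $C\EE'[|K'|^2]$ and the fact that the dominating factor does not depend on $\omega$. The fourth term is handled the same way, with $\bar M'\le\CM$ and $|\varphi_{\mu,1}|\le C$; its integrand is $o(1)$ in measure and bounded by $C|\chi^{\varepsilon\prime}|$.

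\textbf{The jump coefficient.} For $\eta$, we repeat the argument in $B_p$, replacing pointwise bounds by the $L^j(\nu)$ bounds \eqref{eq:etaderivatives} and using Minkowski's inequality to pull the norm inside $\EE'$ and the $\theta$, $a$ integrals, as in Remark~\ref{rem:banachchain}.

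\textbf{Main obstacle.} The delicate step is that uniform continuity holds only on sets with bounded $(x,u,y,m_4)$. Passing from this local statement to $L^p$ convergence needs the localization to be uniform in $\varepsilon$ and $\theta$, and in the measure terms this localization must be carried out inside the independent-copy expectation while the unprimed arguments are left free. I would handle this by proving convergence in measure and then invoking uniform integrability, which comes from the higher-moment bounds available for every $q$.
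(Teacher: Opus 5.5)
Your proposal is correct and follows essentially the paper's route. The paper also shows that the four derivative differences $A_i^\varepsilon$ tend to zero in measure on $[0,T]\times\Omega\times\Omega'\times[0,1]^2$ via Assumption~\ref{ass:regularity}(iii). It then uses the growth bounds and uniform moment estimates to get $\|A_i^\varepsilon\|_{L^{2p}}\to0$ by Vitali, and concludes by H\"older against the uniformly bounded factors $\chi^\varepsilon$, $h$, $K^{\varepsilon\prime}$, $\bar M'\chi^{\varepsilon\prime}$.

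Your variants are harmless: peeling off $\varphi^{\varepsilon,\theta}_\mu(X^{\varepsilon\prime})-\varphi^{\varepsilon,\theta}_\mu(\bar X')$ with the bounded evaluation derivative, and running nested dominated convergence inside $\EE'$. Two small repairs are needed. First, take the pointwise limits along a.s.-convergent subsequences, since $X^\varepsilon_\theta\to\bX$ only in $\Sp^q$. Second, your truncation event should also force $\varepsilon|\chi^\varepsilon|+\varepsilon|h|$ to be small, not merely bounded by $R$; the complement of that extra condition has probability $O(\varepsilon^2)$ by Chebyshev.
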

	\begin{proof}
	By \eqref{eq:perturbation}, for every finite $q\ge2$,
	$ \EE\sup_{t\le T}|X_t^\varepsilon-\bar X_t|^q\to0$, and $ 
	\sup_{0<\varepsilon\le1}
	\left(\|\chi^\varepsilon\|_{\mathcal S^q}
	+\|K^\varepsilon\|_{\mathcal S^q}\right)<\infty$. 
	Moreover, the moment estimate in \eqref{eq:forwardmoment} gives
	$\sup_{0<\varepsilon\le1}\sup_{t\le T}m_4(\mu_t^\varepsilon)<\infty$.
	Combining \eqref{eq:perturbation} with Lemma~\ref{lem:wasserstein}, we obtain $
	\sup_{t\le T}W_2(\mu_t^\varepsilon,\bar\mu_t)\to0.$ 
	Since $\mu_t^{\varepsilon,\theta}=(1-\theta)\bar\mu_t+\theta\mu_t^\varepsilon$, $
	W_2(\mu_t^{\varepsilon,\theta},\bar\mu_t)^2
	\le\theta W_2(\mu_t^\varepsilon,\bar\mu_t)^2.$ 
	Therefore, $
	\sup_{\theta\in[0,1]}\sup_{t\le T}
	W_2(\mu_t^{\varepsilon,\theta},\bar\mu_t)\to0,$ and $ 
	\sup_{\varepsilon,\theta,t}m_4(\mu_t^{\varepsilon,\theta})<\infty.$ 
	
	Fix $p\ge2$ and consider the finite measure space $\mathfrak S=[0,T]\times\Omega\times\Omega'\times[0,1]^2$
	equipped with $\drm \mathfrak m=\drm t\,\drm \QQ\,\drm \QQ'\drm \theta\drm a.$ For notational simplicity, we define  $
		A_1^\varepsilon=\phi_x^{\varepsilon,\theta}-\bar\phi_x$, $A_2^\varepsilon=\phi_u^{\varepsilon,\theta}-\bar\phi_u$, $
		A_3^\varepsilon=\phi_\mu^{\varepsilon,\theta}(X^{\varepsilon\prime})-\bar\phi_\mu(\bar X')$, and $A_4^\varepsilon
		=\phi_{\mu,1}^{\varepsilon,\theta}(\bar X'+a\varepsilon\chi^{\varepsilon\prime})
		-\bar\phi_{\mu,1}(\bar X').$ 
		
	Then, from the preceding convergence and Assumption~\ref{ass:regularity}(iii), we have $A_i^\varepsilon\to0$ in $\mathfrak m$-measure for $i=1,2,3,4$. Moreover, by \eqref{eq:boundedderivatives}, $|A_1^\varepsilon|
	+|A_2^\varepsilon|+|A_4^\varepsilon|\le C$, and $|A_3^\varepsilon|\le C(1+|X^{\varepsilon\prime}|+|\bar X'|).$
	
	Together with \eqref{eq:forwardmoment}, these bounds give, for every finite $q\ge2$, $
	\sup_{0<\varepsilon\le1}\sum_{i=1}^4\|A_i^\varepsilon\|_{L^q(\mathfrak m)}<\infty.$ 
	
	Hence, by uniform integrability and Vitali's convergence theorem, for any $i\in\{1,2,3,4\}$,
	\[
	\|A_i^\varepsilon\|_{L^{2p}(\mathfrak m)} \to 0.
	\]
	
	On the other hand, by \eqref{eq:perturbation} and the moment bounds on the admissible controls,
	\[ \sup_{0<\varepsilon\le1}\big(\|\chi^\varepsilon\|_{L^{2p}(\mathfrak m)}+\|h\|_{L^{2p}(\mathfrak m)}+\|K^{\varepsilon\prime}\|_{L^{2p}(\mathfrak m)}+\|\bar M'\chi^{\varepsilon\prime}\|_{L^{2p}(\mathfrak m)}\big)<\infty.
	\]
	Applying H\"older's inequality, we obtain respectively
	\[
	\begin{aligned}
		\|A_1^\varepsilon\chi^\varepsilon\|_{L^p}
		\le&
		\|A_1^\varepsilon\|_{L^{2p}}
		\|\chi^\varepsilon\|_{L^{2p}}
		\to 0,&\qquad
		\|A_2^\varepsilon h\|_{L^p}
		\le&
		\|A_2^\varepsilon\|_{L^{2p}}
		\|h\|_{L^{2p}}
		\to 0,\\
		\|A_3^\varepsilon K^{\varepsilon\prime}\|_{L^p}
		\le&
		\|A_3^\varepsilon\|_{L^{2p}}
		\|K^{\varepsilon\prime}\|_{L^{2p}}
		\to 0,&\qquad
		\|A_4^\varepsilon
		\bar M'\chi^{\varepsilon\prime}\|_{L^p}
		\le&
		\|A_4^\varepsilon\|_{L^{2p}}
		\|\bar M'\chi^{\varepsilon\prime}\|_{L^{2p}}
		\to 0.
	\end{aligned}
	\] 
	
	Therefore, by \eqref{eq:remainderformula}, along with Jensen's inequality and Minkowski's inequality, we have 
	\[ \left(\EE\int_0^T|r_\phi^\varepsilon(t)|^p\drm t\right)^{1/p}\le\|A_1^\varepsilon\chi^\varepsilon\|_{L^p}+\|A_2^\varepsilon h\|_{L^p}+
	\|A_3^\varepsilon K^{\varepsilon\prime}\|_{L^p}+\|A_4^\varepsilon\bar M'\chi^{\varepsilon\prime}\|_{L^p}\to 0.
	\]
	This proves \eqref{eq:remainderconvergence}.
	
	For $\phi=\eta$, by \eqref{eq:etaderivatives} and
	Assumption~\ref{ass:regularity}(iii), the same estimates hold with the $L^2(\nu)\cap L^p(\nu)$ norm.
\end{proof}

This remainder estimate now closes the error equations for $(\chi^\varepsilon-\chi,K^\varepsilon-K)$. It gives both process differentiability and uniform convergence against the weighted test functions we use for the marginal variation.

	\begin{theorem}
		\label{thm:differentiability}
		For every finite $p\geq2$,
		\begin{equation}\label{eq:processdifferentiability}
			\|\chi^\varepsilon-\chi\|_{\Sp^p}
			+\|K^\varepsilon-K\|_{\Sp^p}\longrightarrow0.
		\end{equation}
		For every fixed $\kappa\geq0$,
		\begin{equation}\label{eq:lawdifferentiability}
			\sup_{t\leq T}\sup_{\|\psi\|_{2,\kappa}\leq1}
			|\langle\mu_{1,t}^\varepsilon-\dot\mu_t,\psi\rangle|
			\longrightarrow0.
		\end{equation}
	\end{theorem}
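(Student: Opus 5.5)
We prove \eqref{eq:processdifferentiability} by closing the error equations for $\delta\chi^\varepsilon=\chi^\varepsilon-\chi$ and $\delta K^\varepsilon=K^\varepsilon-K$, and then deduce \eqref{eq:lawdifferentiability} from Lemma~\ref{lem:kernel}.

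\emph{Error equations.} Subtracting \eqref{eq:chi} from the equation for $\chi^\varepsilon$ and using \eqref{eq:exactremainder} gives
\[
\drm\delta\chi^\varepsilon_t=\bigl[\mathcal L_b(t;\delta\chi^\varepsilon,\delta K^\varepsilon)+r_b^\varepsilon\bigr]\drm t+\bigl[\mathcal L_\sigma+r_\sigma^\varepsilon\bigr]\drm W_t+\int_E\bigl[\mathcal L_\eta+r_\eta^\varepsilon\bigr]\Nt(\drm\zeta,\drm t).
\]
For the likelihood we write
\[
K^\varepsilon_t=\sum_k\int_0^t\Bigl[\Bigl(\tfrac{\lambda_k^\varepsilon}{c}-1\Bigr)K^\varepsilon_{s-}+\tfrac{\bM_{s-}}{c}\Delta_\varepsilon\lambda_k(s)\Bigr]\drm\Yt_k(s),
\]
which holds exactly because $M^\varepsilon\lambda^\varepsilon-\bM\bar\lambda=(M^\varepsilon-\bM)\lambda^\varepsilon+\bM(\lambda^\varepsilon-\bar\lambda)$. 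Subtracting \eqref{eq:K}, the integrand of $\delta K^\varepsilon$ becomes
\[
\Bigl(\tfrac{\bar\lambda_k}{c}-1\Bigr)\delta K^\varepsilon_{s-}+\tfrac{\bM_{s-}}{c}\bigl[\mathcal L_{\lambda_k}(\delta\chi^\varepsilon,\delta K^\varepsilon)+r^\varepsilon_{\lambda_k}\bigr]+\tfrac{\lambda_k^\varepsilon-\bar\lambda_k}{c}K^\varepsilon_{s-}.
\]

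\emph{Gronwall estimate.} The linear parts obey exactly the bounds used for $G_\varphi$ and $Q_r$ in the proof of Proposition~\ref{prop:variation}, via \eqref{eq:Dmuestimate} and $\|\cdot\|_2\le\|\cdot\|_p$. Lemma~\ref{lem:maximal} and Gronwall then yield
\[
\EE\sup_{s\le T}\bigl(|\delta\chi^\varepsilon_s|^p+|\delta K^\varepsilon_s|^p\bigr)\le C_p\,\EE\int_0^T\Bigl(\sum_\varphi|r_\varphi^\varepsilon|^p+\|r^\varepsilon_\eta\|^p_{B_p}+\sum_k|\lambda_k^\varepsilon-\bar\lambda_k|^p|K^\varepsilon_{t-}|^p\Bigr)\drm t.
\]
The remainder terms vanish by Proposition~\ref{prop:remainder}. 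The last term is the extra cross term not present in Proposition~\ref{prop:variation}, and it is the step that needs care, since it is not of the form in \eqref{eq:exactremainder}. We handle it by Hölder: $|\lambda^\varepsilon-\bar\lambda|\le L(\varepsilon|\chi^\varepsilon|+\rho(\mu^\varepsilon,\bmu)+\varepsilon|h|)$, which is $O(\varepsilon)$ in $L^{2p}$ by \eqref{eq:perturbation}, while $\sup_\varepsilon\|K^\varepsilon\|_{\Sp^{2p}}<\infty$. This term is therefore $O(\varepsilon^p)$.

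\emph{Marginal expansion.} Fix $\psi$ with $\|\psi\|_{2,\kappa}\le1$ and apply Lemma~\ref{lem:kernel} with a deterministic kernel $\Psi=\psi$. Then
\[
\langle\mu^\varepsilon_{1,t}-\dot\mu_t,\psi\rangle=\EE\bigl[\delta K^\varepsilon_t\psi(X^\varepsilon_t)\bigr]+\EE\bigl[K_t(\psi(X^\varepsilon_t)-\psi(\bX_t))\bigr]+\EE\Bigl[\bM_t\int_0^1\bigl(\nabla\psi(\bX_t+a\varepsilon\chi^\varepsilon_t)\cdot\chi^\varepsilon_t-\nabla\psi(\bX_t)\cdot\chi_t\bigr)\drm a\Bigr].
\]
We bound the three terms using $|\psi|+|\nabla\psi|\le1+|y|^\kappa$ and $|D^2\psi|\le 1+|y|^\kappa$:
\begin{itemize}
\item The first is at most $\|\delta K^\varepsilon_t\|_2\,\|1+|X^\varepsilon_t|^\kappa\|_2$.
\item By the mean value theorem, the second is at most $\varepsilon\,\EE\bigl[|K_t||\chi^\varepsilon_t|(1+|\bX_t|+\varepsilon|\chi^\varepsilon_t|)^\kappa\bigr]$, up to constants.
\item Splitting the third as $\nabla\psi(\cdot)\cdot\delta\chi^\varepsilon_t$ plus a second-derivative difference, it is at most $C\|\delta\chi^\varepsilon_t\|_2+C\varepsilon\,\EE\bigl[|\chi^\varepsilon_t|^2(1+|\bX_t|+|\chi^\varepsilon_t|)^\kappa\bigr]$.
\end{itemize}
All of these bounds are uniform in $\psi$ and in $t$. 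They tend to zero by \eqref{eq:processdifferentiability}, the moment bounds \eqref{eq:forwardmoment} and \eqref{eq:variationmoment}, the uniform $\Sp^q$ bounds on $\chi^\varepsilon$, and $\bM\le C_M$. This proves \eqref{eq:lawdifferentiability}.
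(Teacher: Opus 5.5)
Your proposal is correct and follows the paper's proof essentially step for step: the same error equations obtained from \eqref{eq:exactremainder}, the same H\"older treatment of the extra cross term $K^\varepsilon_{t-}(\lambda_k^\varepsilon-\bar\lambda_k)/c$ using $L^{2p}$ bounds, Gronwall's inequality, and then Lemma~\ref{lem:kernel} with Taylor bounds for the marginal expansion. The only difference is cosmetic. You pair $\delta K^\varepsilon_t$ with $\psi(X^\varepsilon_t)$ and the limit $K_t$ with the increment, whereas the paper pairs $\delta K^\varepsilon_t$ with $\psi(\bX_t)$ and $K^\varepsilon_t$ with the increment; both splittings close with the same moment bounds.
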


\begin{proof}
	Let  $\widehat\chi^\varepsilon=\chi^\varepsilon-\chi$, and $ \widehat K^\varepsilon=K^\varepsilon-K$. And define  $
	\mathcal L_\phi^\varepsilon(t)=\mathcal L_\phi(t;\widehat\chi^\varepsilon,\widehat K^\varepsilon).$
	
	By \eqref{eq:chi} and \eqref{eq:exactremainder}, we obtain that the dynamics of $\hat\chi^\ep$ is as follows 
	\[ \drm \widehat\chi_t^\varepsilon=\bigl(\mathcal L_b^\varepsilon(t)+r_b^\varepsilon(t)\bigr)\,\drm t+\bigl(\mathcal L_\sigma^\varepsilon(t)+r_\sigma^\varepsilon(t)\bigr)\,\drm W_t+\int_E\bigl(\mathcal L_\eta^\varepsilon(t,\zeta)+r_\eta^\varepsilon(t,\zeta)\bigr)\Nt(\drm\zeta,\drm t).
	\]
	
	For the likelihood process, we have
	\[\drm K_t^\varepsilon=\sum_{k=1}^n\bigg[\left(\frac{\bar\lambda_k(t)}c-1\right)
	K_{t-}^\varepsilon\\+\frac{\bar M_{t-}}c\Delta_\varepsilon\lambda_k(t)+\frac{K_{t-}^\varepsilon}{c}\bigl(\lambda_k^\varepsilon(t)
	-\bar\lambda_k(t)\bigr)\bigg]\drm\Yt_k(t).
	\]
	Subtracting \eqref{eq:K} and using \eqref{eq:exactremainder}, we obtain
	\[\drm \widehat K_t^\varepsilon=\sum_{k=1}^n\bigg[\left(\frac{\bar\lambda_k(t)}c-1\right)\widehat K_{t-}^\varepsilon+\frac{\bar M_{t-}}c
	\mathcal L_{\lambda_k}^\varepsilon(t)+g_k^\varepsilon(t)\bigg]\drm\Yt_k(t),
	\]
	where $g_k^\varepsilon(t)=\frac{\bar M_{t-}}cr_{\lambda_k}^\varepsilon(t)+\frac{K_{t-}^\varepsilon}{c}\bigl(\lambda_k^\varepsilon(t)
	-\bar\lambda_k(t)\bigr).$
	
	By \eqref{eq:remainderconvergence} and $0\le\bar M\le\CM$, $\EE\int_0^T\left|\frac{\bar M_{t-}}cr_{\lambda_k}^\varepsilon(t)\right|^p\drm t\to 0$. Moreover, combining with Assumption~\ref{ass:intensity} and \eqref{eq:perturbation}, we derive that 
	\[
	\begin{aligned}
		&\EE\int_0^T|\lambda_k^\varepsilon(t)-\bar\lambda_k(t)|^{2p}\,\drm t\\
		&\le C_p\bigg[T\EE\sup_{t\le T}|X_t^\varepsilon-\bar X_t|^{2p}+T\sup_{t\le T}\rho(\mu_t^\varepsilon,\bar\mu_t)^{2p}
		+\varepsilon^{2p}\EE\int_0^T|h_t|^{2p}\,\drm t\bigg]\\
		&\le C_p\varepsilon^{2p}.
	\end{aligned}
	\]
	Since $\sup_{0<\varepsilon\le1}	\|K^\varepsilon\|_{\mathcal S^{2p}}<\infty$, by applying H\"older's inequality,
	\[
	\begin{aligned}
		&\EE\int_0^T|K_{t-}^\varepsilon|^p
		|\lambda_k^\varepsilon(t)-\bar\lambda_k(t)|^p\,\drm t\\
		&\quad\le
		\left(\EE\int_0^T|K_{t-}^\varepsilon|^{2p}\,\drm t\right)^{1/2}
		\left(\EE\int_0^T|\lambda_k^\varepsilon(t)-\bar\lambda_k(t)|^{2p}\,\drm t\right)^{1/2}\\
		&\quad\le C_p\varepsilon^p\to0.
	\end{aligned}
	\]
	Consequently, $\EE\int_0^T|g_k^\varepsilon(t)|^p\drm t\to 0$.
	
	Define
	\[ R_p^\varepsilon=\EE\int_0^T\bigg[|r_b^\varepsilon(t)|^p+|r_\sigma^\varepsilon(t)|^p+\|r_\eta^\varepsilon(t)\|_{L^2(\nu)}^p
	+\|r_\eta^\varepsilon(t)\|_{L^p(\nu)}^p+\sum_{k=1}^n|g_k^\varepsilon(t)|^p\bigg]\drm t.
	\]
	It follows from \eqref{eq:remainderconvergence} and the preceding estimates that $R_p^\varepsilon\to 0$. 
	
	Let $E_p^\varepsilon(t)=\EE\sup_{s\le t}\bigl(	|\widehat\chi_s^\varepsilon|^p	+|\widehat K_s^\varepsilon|^p
	\bigr).$ Applying Lemma~\ref{lem:maximal} to the error equations and using \eqref{eq:Dmuestimate}, we obtain $
	E_p^\varepsilon(t)\le C_p\int_0^tE_p^\varepsilon(s)\,\drm s+C_pR_p^\varepsilon.$ Then by using Gronwall's inequality,
	$E_p^\varepsilon(T)\le C_pe^{C_pT}R_p^\varepsilon\to 0$. Therefore, $\|\chi^\varepsilon-\chi\|_{\mathcal S^p}+\|K^\varepsilon-K\|_{\mathcal S^p}\to 0$, which proves \eqref{eq:processdifferentiability}.
	
	We next prove \eqref{eq:lawdifferentiability}. Let $\psi\in\mathcal T_2^\kappa$. By \eqref{eq:kernel} and \eqref{eq:lawvariation},
	\begin{align}
		\inner{\mu_{1,t}^\varepsilon-\dot\mu_t}{\psi}=&\EE\left[(K_t^\varepsilon-K_t)\psi(\bar X_t)\right]+\EE\left[
		\bar M_t\nabla\psi(\bar X_t)\cdot(\chi_t^\varepsilon-\chi_t)	\right]+\EE\left[K_t^\varepsilon\bigl(\psi(X_t^\varepsilon)
		-\psi(\bar X_t)\bigr)\right]\\
		&+\EE\left[\bar M_t\int_0^1	\bigl(\nabla\psi(\bar X_t+a\varepsilon\chi_t^\varepsilon)
		-\nabla\psi(\bar X_t)\bigr)\cdot\chi_t^\varepsilon	\,\drm a\right].
	\end{align}
	
	By the definition of $\|\psi\|_{2,\kappa}$, $|\psi(x)|+|\nabla\psi(x)|+|D^2\psi(x)|\le \|\psi\|_{2,\kappa}(1+|x|^\kappa).$
	Hence, by the Cauchy--Schwarz inequality, $\left|\EE[(K_t^\varepsilon-K_t)\psi(\bar X_t)]\right|\le\|\psi\|_{2,\kappa}
	\|K^\varepsilon-K\|_{\mathcal S^2}	\|1+|\bar X_t|^\kappa\|_{L^2}$,  and $\left|\EE\left[\bar M_t\nabla\psi(\bar X_t)\cdot
	(\chi_t^\varepsilon-\chi_t)\right]\right|\le\CM\|\psi\|_{2,\kappa}\|\chi^\varepsilon-\chi\|_{\mathcal S^2}\|1+|\bar X_t|^\kappa\|_{L^2}.$
	
	Furthermore,  by directly calculating, $ \psi(X_t^\varepsilon)-\psi(\bar X_t)=\varepsilon\int_0^1\nabla\psi(\bar X_t+a\varepsilon\chi_t^\varepsilon)	\cdot\chi_t^\varepsilon\,\drm a$, and $\nabla\psi(\bar X_t+a\varepsilon\chi_t^\varepsilon)-\nabla\psi(\bar X_t)=\varepsilon\int_0^aD^2\psi(\bar X_t+r\varepsilon\chi_t^\varepsilon)
	\chi_t^\varepsilon\,\drm r.$ 
	
	Since $
	\bar X_t+r\varepsilon\chi_t^\varepsilon
	=(1-r)\bar X_t+rX_t^\varepsilon,$ 
	we have, for $r\in[0,1]$, $
	1+|\bar X_t+r\varepsilon\chi_t^\varepsilon|^\kappa
	\le C_\kappa
	\bigl(
	1+|\bar X_t|^\kappa
	+|X_t^\varepsilon|^\kappa
	\bigr).$
	It follows that $
	|\psi(X_t^\varepsilon)-\psi(\bar X_t)|\le C_\kappa\varepsilon
	\|\psi\|_{2,\kappa}
	|\chi_t^\varepsilon|
	\bigl(
	1+|\bar X_t|^\kappa
	+|X_t^\varepsilon|^\kappa
	\bigr),$
	and $ \bigl|
	\nabla\psi(\bar X_t
	+a\varepsilon\chi_t^\varepsilon)
	-\nabla\psi(\bar X_t)
	\bigr|\le C_\kappa\varepsilon
	\|\psi\|_{2,\kappa}
	|\chi_t^\varepsilon|
	\bigl(
	1+|\bar X_t|^\kappa
	+|X_t^\varepsilon|^\kappa
	\bigr).$
	
	Let $B_t^\varepsilon=1+|\bar X_t|^\kappa+|X_t^\varepsilon|^\kappa$.
	By \eqref{eq:forwardmoment}, \eqref{eq:perturbation}, and \eqref{eq:variationmoment}, $
	\sup_{\varepsilon,t}\EE|B_t^\varepsilon|^2<\infty,$ and $ 
	\sup_\varepsilon\left(
	\EE\sup_{t\le T}|K_t^\varepsilon|^4
	+\EE\sup_{t\le T}|\chi_t^\varepsilon|^4\right)<\infty.$ 
	
	Consequently, by using H\"older's inequality, $$
	\EE[B_t^\varepsilon|K_t^\varepsilon||\chi_t^\varepsilon|]
	\le(\EE|B_t^\varepsilon|^2)^{1/2}
	(\EE|K_t^\varepsilon|^4)^{1/4}
	(\EE|\chi_t^\varepsilon|^4)^{1/4}\le C,$$
	and $$
	\EE[B_t^\varepsilon|\chi_t^\varepsilon|^2]\le(\EE|B_t^\varepsilon|^2)^{1/2}
	(\EE|\chi_t^\varepsilon|^4)^{1/2}\le C.$$ 
	
	Therefore, $\left|
	\EE\left[
	K_t^\varepsilon
	(\psi(X_t^\varepsilon)-\psi(\bar X_t))
	\right]\right|  \le C\varepsilon\|\psi\|_{2,\kappa},$
	and $
	\bigg|
	\EE\bigg[
	\bar M_t\int_0^1
	\bigl(
	\nabla\psi(\bar X_t
	+a\varepsilon\chi_t^\varepsilon)
	-\nabla\psi(\bar X_t)
	\bigr)\cdot\chi_t^\varepsilon
	\,\drm a
	\bigg]\bigg|\le C\varepsilon\|\psi\|_{2,\kappa}.$ 
	
	Combining the preceding estimates
	and using
	$\sup_{t\le T}
	\|1+|\bar X_t|^\kappa\|_{L^2}<\infty$,
	we obtain
	\[ \sup_{t\le T}
	\sup_{\|\psi\|_{2,\kappa}\le1}
	\left|
	\inner{\mu_{1,t}^\varepsilon-\dot\mu_t}{\psi}
	\right| \le C\bigl(
	\|K^\varepsilon-K\|_{\mathcal S^2}
	+\|\chi^\varepsilon-\chi\|_{\mathcal S^2}
	+\varepsilon
	\bigr)\to 0.
	\]
	This proves \eqref{eq:lawdifferentiability}.
\end{proof}
	
\subsection{Directional cost expansion}
Let $\bar f_t=f(t,\bX_{t-},\bmu_t,\bu_t)$ and
	$\bar\Phi=\Phi(\bX_T,\bmu_T)$. Define
	\begin{align}\label{eq:Df}
	Df_t=\bar f_x(t)\cdot\chi_{t-}+\bar f_u(t)\cdot h_t
	+D_\mu f(t;\chi,K)
\end{align}
and 
\begin{align}\label{eq:DPhi}
	D\Phi_T=\Phi_x(\bar X_T,\bar\mu_T)\cdot\chi_T+\EE'\left[K_T'\Phi_\mu(\bar X_T,\bar\mu_T;\bar X_T')
	+\bar M_T'\Phi_{\mu,1}(\bar X_T,\bar\mu_T;\bar X_T')\cdot\chi_T'\right].
\end{align}
	We now differentiate the running and terminal costs. Together with the likelihood variation, these derivatives
	yield the first-order expansion of the objective.
	
	\begin{theorem}\label{thm:costderivative}
		Under Assumptions~\ref{ass:state}--\ref{ass:regularity},
		$J(\bu+\varepsilon h)=J(\bu)+\varepsilon J_1(h)+o(\varepsilon)$, where
		\begin{equation}\label{eq:J1}
			J_1(h)=\EE\left[K_T\bar\Phi+\bM_TD\Phi_T
			+\int_0^T(K_t\bar f_t+\bM_tDf_t)\dd t\right].
		\end{equation}
		In particular, if $\bu$ minimizes $J$ over $\Uad$, then
		$J_1(v-\bu)\geq0$ for every $v\in\Uad$.
	\end{theorem}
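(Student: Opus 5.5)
We write the difference quotient of the cost on the reference space and split it into a likelihood part and a coefficient part, mirroring Lemma~\ref{lem:kernel}. For the terminal term, with $\Phi^\varepsilon=\Phi(X_T^\varepsilon,\mu_T^\varepsilon)$,
\[
\frac{M_T^\varepsilon\Phi^\varepsilon-\bM_T\bar\Phi}{\varepsilon}
=K_T^\varepsilon\Phi^\varepsilon+\bM_T\,\frac{\Phi^\varepsilon-\bar\Phi}{\varepsilon}.
\]
The running term has the same structure, with $M_t^\varepsilon f^\varepsilon_t$ in place of $M_T^\varepsilon\Phi^\varepsilon$.

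For the coefficient quotient we apply Lemma~\ref{lem:flatchain} on $\mathcal P_4$. Here $\sup_\varepsilon\sup_t m_4(\mu^\varepsilon_t)<\infty$ by \eqref{eq:forwardmoment}. Lemma~\ref{lem:kernel} then turns the measure term into the analogue of \eqref{eq:chainquotient}--\eqref{eq:remainderformula}. This gives
\[
\frac{f(t,X^\varepsilon_{t-},\mu^\varepsilon_t,u^\varepsilon_t)-\bar f_t}{\varepsilon}
=Df_t+\mathcal L_f(t;\chi^\varepsilon-\chi,K^\varepsilon-K)+r_f^\varepsilon(t),
\]
and similarly for $\Phi$, where the remainders $r_f^\varepsilon$ and $r_\Phi^\varepsilon$ have the form \eqref{eq:remainderformula}.

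\textbf{Main obstacle.} The remainder bound of Proposition~\ref{prop:remainder} used bounded derivatives, whereas now \eqref{eq:costderivatives} allows quadratic growth, e.g. $|f_x|\le CA_2$ and $|f_\mu(\cdot;y)|\le CA_2(1+|y|^2)$. We handle this by the same Vitali argument on $\mathfrak S$.
\begin{itemize}
\item Each difference $A_i^\varepsilon$ (for instance $f_x^{\varepsilon,\theta}-\bar f_x$) tends to $0$ in $\mathfrak m$-measure. This follows from Assumption~\ref{ass:regularity}(iii), from \eqref{eq:perturbation}, and from $W_2$-convergence of $\mu^{\varepsilon,\theta}_t$ via Lemma~\ref{lem:wasserstein}. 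The uniform continuity applies on sets where $|x|+|u|+|y|+m_4\le R$; the complements have small measure by Chebyshev.
\item Each $A_i^\varepsilon$ is bounded by a polynomial in $|\bX|,|X^\varepsilon|,|\bX'|,|X^{\varepsilon\prime}|,|\bu|,|h|$ and $m_2$. All of these have uniformly bounded moments of every order, by \eqref{eq:forwardmoment} and the admissibility moments $\EE\int|u|^r<\infty$ for all $r$. Hence $\sup_\varepsilon\|A_i^\varepsilon\|_{L^{q}(\mathfrak m)}<\infty$ for every $q$, and Vitali gives $\|A_i^\varepsilon\|_{L^{4}}\to0$.
\end{itemize}
Hölder against $\chi^\varepsilon,K^{\varepsilon\prime},h$ (bounded in every $L^q$) and against $\bM\le\CM$ then gives $\EE\int_0^T \bM_t|r_f^\varepsilon|\dd t\to0$ and $\EE[\bM_T|r_\Phi^\varepsilon|]\to0$. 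The linear terms $\mathcal L_f(\chi^\varepsilon-\chi,K^\varepsilon-K)$ and its $\Phi$ analogue vanish in $L^1$ by \eqref{eq:processdifferentiability}. Here we use the estimate \eqref{eq:Dmuestimate} modified for the quadratic growth of $f_\mu$. That estimate needs $\|1+|\bX|^2\|_{L^2}$ and the fourth moments, which \eqref{eq:forwardmoment} provides.

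For the likelihood part we write
\[
K_T^\varepsilon\Phi^\varepsilon-K_T\bar\Phi=(K_T^\varepsilon-K_T)\Phi^\varepsilon+K_T(\Phi^\varepsilon-\bar\Phi).
\]
The first term goes to $0$ in $L^1$ by Cauchy--Schwarz, using \eqref{eq:processdifferentiability} and the quadratic growth of $\Phi$ with moment bounds. The second equals $\varepsilon K_T\cdot(\text{quotient})$, which is $O(\varepsilon)$ in $L^1$ by the bounds above. The running cost is handled the same way on $[0,T]\times\Omega$ with Fubini. Summing the pieces gives $(J(u^\varepsilon)-J(\bu))/\varepsilon\to J_1(h)$, with $J_1$ as in \eqref{eq:J1}.

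Finally, if $\bu$ is optimal, then $u^\varepsilon=\bu+\varepsilon(v-\bu)\in\Uad$ by convexity of $U$. Hence $0\le (J(u^\varepsilon)-J(\bu))/\varepsilon\to J_1(v-\bu)$, so $J_1(v-\bu)\ge0$.
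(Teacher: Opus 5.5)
Your proposal is correct and follows essentially the same route as the paper. The shared steps are: Lemma~\ref{lem:flatchain} combined with Lemma~\ref{lem:kernel} to split each cost quotient into the linear term plus remainders $r_f^\varepsilon, r_\Phi^\varepsilon$; the truncation-plus-Vitali argument on $\mathfrak S$, using polynomial growth and uniform moments of all orders to obtain $\|A_i^\varepsilon\|_{L^4(\mathfrak m)}\to0$; and a product split of $M^\varepsilon\Phi^\varepsilon$ and $M^\varepsilon f^\varepsilon$. The only difference is cosmetic: you group the likelihood part as $(K_T^\varepsilon-K_T)\Phi^\varepsilon+K_T(\Phi^\varepsilon-\bar\Phi)$, whereas the paper writes $K_T^\varepsilon\bar\Phi+\varepsilon K_T^\varepsilon\Delta_\varepsilon\Phi_T$, and both versions rest on the same uniform $L^2$ bound for the cost quotients.
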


\begin{proof}
	Define  $f_t^\varepsilon=f(t,X_{t-}^\varepsilon,\mu_t^\varepsilon,u_t^\varepsilon)$, $\Phi_T^\varepsilon=\Phi(X_T^\varepsilon,\mu_T^\varepsilon)$, $\Delta_\varepsilon f_t=\frac{f_t^\varepsilon-\bar f_t}{\varepsilon}$, and $\Delta_\varepsilon\Phi_T=\frac{\Phi_T^\varepsilon-\bar\Phi}{\varepsilon}.$ 
	
	We first show that
	\begin{equation}\label{eq:costquotientconvergence} 
			\Delta_\varepsilon f\to Df
			\text{in }L^2(dt\otimes d\QQ),\qquad
			\Delta_\varepsilon\Phi_T\to D\Phi_T
		\text{in }L^2(\QQ). 
	\end{equation}
	Applying Lemmas~\ref{lem:flatchain} and \ref{lem:kernel} to the running cost, we obtain
	\[ \Delta_\varepsilon f_t
	=\int_0^1\bigg[f_x^{\varepsilon,\theta}(t)\cdot\chi_{t-}^\varepsilon
	+f_u^{\varepsilon,\theta}(t)\cdot h_t+\EE'\bigl[K_t^{\varepsilon\prime}
	f_\mu^{\varepsilon,\theta}(t;X_t^{\varepsilon\prime})\bigr]+\int_0^1\EE'\bigl[\bar M_t'
	f_{\mu,1}^{\varepsilon,\theta}
	(t;\bar X_t'+a\varepsilon\chi_t^{\varepsilon\prime})
	\cdot\chi_t^{\varepsilon\prime}\bigr]\,\drm a\bigg]\,\drm\theta.
	\]
	Therefore,
	\begin{equation}\label{eq:costquotientremainder}
		\begin{aligned}
			\Delta_\varepsilon f_t-Df_t
			={}&\bar f_x(t)\cdot(\chi_{t-}^\varepsilon-\chi_{t-})
			+D_\mu f(t;\chi^\varepsilon-\chi,K^\varepsilon-K)
			+r_f^\varepsilon(t),
		\end{aligned}
	\end{equation}
	where $r_f^\varepsilon(t)
	=\int_0^1\bigl[
	(f_x^{\varepsilon,\theta}-\bar f_x)\cdot\chi_{t-}^\varepsilon
	+(f_u^{\varepsilon,\theta}-\bar f_u)\cdot h_t
	\bigr]\,\drm\theta+\int_0^1\EE'\bigl[
	K_t^{\varepsilon\prime}
	\bigl(f_\mu^{\varepsilon,\theta}(t;X_t^{\varepsilon\prime})
	-\bar f_\mu(t;\bar X_t')\bigr)\bigr]\,\drm\theta+\int_0^1\int_0^1\EE'\bigl[\bar M_t'
	\bigl(f_{\mu,1}^{\varepsilon,\theta}
	(t;\bar X_t'+a\varepsilon\chi_t^{\varepsilon\prime})
	-\bar f_{\mu,1}(t;\bar X_t')\bigr)
	\cdot\chi_t^{\varepsilon\prime}\bigr]\,\drm a\,\drm\theta.$
	
	Since for 	the terminal term, 
	\begin{align}
		\Delta_\varepsilon\Phi_T-D\Phi_T
		=&\Phi_x(\bar X_T,\bar\mu_T)
		\cdot(\chi_T^\varepsilon-\chi_T)+\EE'\bigl[(K_T^{\varepsilon\prime}-K_T')
		\Phi_\mu(\bar X_T,\bar\mu_T;\bar X_T')\bigr]\\
		&+\EE'\bigl[\bar M_T'
		\Phi_{\mu,1}(\bar X_T,\bar\mu_T;\bar X_T')
		\cdot(\chi_T^{\varepsilon\prime}-\chi_T')\bigr]
		+r_\Phi^\varepsilon,
	\end{align}
	where $
	r_\Phi^\varepsilon
	=\int_0^1
	(\Phi_x^{\varepsilon,\theta}-\bar\Phi_x)
	\cdot\chi_T^\varepsilon\,\drm\theta+\int_0^1\EE'\bigl[K_T^{\varepsilon\prime}
	\bigl(\Phi_\mu^{\varepsilon,\theta}(X_T^{\varepsilon\prime})
	-\bar\Phi_\mu(\bar X_T')\bigr)\bigr]\,\drm\theta+\int_0^1\int_0^1\EE'\bigl[\bar M_T'
	\bigl(\Phi_{\mu,1}^{\varepsilon,\theta}
	(\bar X_T'+a\varepsilon\chi_T^{\varepsilon\prime})
	-\bar\Phi_{\mu,1}(\bar X_T')\bigr)
	\cdot\chi_T^{\varepsilon\prime}\bigr]\,\drm a\,\drm\theta.$
	
	We prove that
	\begin{equation}\label{eq:costremainderlimit}
		\|r_f^\varepsilon\|_{L^2(dt\otimes d\QQ)}
		+\|r_\Phi^\varepsilon\|_{L^2(\QQ)}
		\longrightarrow0.
	\end{equation}
	By \eqref{eq:perturbation}, \eqref{eq:forwardmoment}, and Lemma~\ref{lem:wasserstein}, $
	\sup_{t\le T}W_2(\mu_t^\varepsilon,\bar\mu_t)\to 0$, and $ 
	\sup_{\varepsilon,\theta,t}
	m_4(\mu_t^{\varepsilon,\theta})<\infty.$ 
	Moreover, $
	\varepsilon\chi^\varepsilon=X^\varepsilon-\bar X
	\longrightarrow0$ in $\mathcal S^q$, and $
	\varepsilon h=u^\varepsilon-\bar u\to 0 $ in $L^q(\drm t\otimes \drm\QQ)$
	for every finite $q\ge2$, and the same holds for
	independent copies of the state variables.
	
	Consider the finite measure space
	\[
	\mathfrak S=[0,T]\times\Omega\times\Omega'
	\times[0,1]^2,
	\qquad
	d\mathfrak m=dt\,d\QQ\,d\QQ'\,d\theta\,da.
	\]
	The four derivative differences in $r_f^\varepsilon$,
	namely
	\[
	\begin{aligned}
		A_1^\varepsilon&=f_x^{\varepsilon,\theta}-\bar f_x,
		&A_2^\varepsilon&=f_u^{\varepsilon,\theta}-\bar f_u,\\
		A_3^\varepsilon&=f_\mu^{\varepsilon,\theta}(X^{\varepsilon\prime})
		-\bar f_\mu(\bar X'),
		&A_4^\varepsilon&=f_{\mu,1}^{\varepsilon,\theta}
		(\bar X'+a\varepsilon\chi^{\varepsilon\prime})
		-\bar f_{\mu,1}(\bar X'),
	\end{aligned}
	\]
	converge to zero in $\mathfrak m$-measure by
	Assumption~\ref{ass:regularity}(iii).
	
Set
\[ 
	H_t^\varepsilon= 1
	+|\bar X_{t-}|^2+|X_{t-}^\varepsilon|^2
	+|\bar X_t'|^2+|X_t^{\varepsilon\prime}|^2\\
	+|\bar u_t|^2+|v_t|^2. 
\]
Then, based on  \eqref{eq:forwardmoment} and the admissibility
of $\bar u,v$, we have 
\[
\sup_{0<\varepsilon\leq1}
\int_{\mathfrak S}(H_t^\varepsilon)^{16}\,
\mathrm d\mathfrak m<\infty.
\]
On $\{H_t^\varepsilon\leq R\}$, all interpolated state and control arguments are bounded. Their convergence in measure, uniform $W_2$ convergence of the measure arguments, and uniform bound on the fourth moments permit application of Assumption~\ref{ass:regularity}(iii) on this set. Combined with Markov's inequality, we obtain
\[
\limsup_{\varepsilon\downarrow0}
\mathfrak m(|A_i^\varepsilon|>\delta)
\leq \frac{C}{R^{16}},
\qquad \delta>0.
\]
Taking $R\to\infty$, we have  $A_i^\varepsilon\to0$ in $\mathfrak m$-measure.

All interpolated arguments are convex combinations of their endpoints, so applying \eqref{eq:costderivatives} yields
$|A_i^\varepsilon|\leq C(H_t^\varepsilon)^2,$ and $ \sup_{0<\varepsilon\leq1}\sum_{i=1}^4\|A_i^\varepsilon\|_{L^8(\mathfrak m)}<\infty.$ 
Hence $\{|A_i^\varepsilon|^4\}_{0<\varepsilon\leq1}$ is uniformly integrable. By using Vitali's theorem,
\[
\|A_i^\varepsilon\|_{L^4(\mathfrak m)}\longrightarrow0,
\qquad i=1,2,3,4.
\]

Moreover, set $B_1^\varepsilon=\chi^\varepsilon$, $B_2^\varepsilon=h$,
$B_3^\varepsilon=K^{\varepsilon\prime}$, and
$B_4^\varepsilon=\bar M'\chi^{\varepsilon\prime}$, which satisfy
\[
\sup_{0<\varepsilon\le1}\sum_{i=1}^4
\|B_i^\varepsilon\|_{L^4(\mathfrak S)}<\infty.
\]
By H\"older's inequality, $
\|A_i^\varepsilon B_i^\varepsilon\|_{L^2(\mathfrak S)}
\le\|A_i^\varepsilon\|_{L^4(\mathfrak S)}
\|B_i^\varepsilon\|_{L^4(\mathfrak S)}\to0.$ 

	By Jensen's inequality with respect to $\QQ'$, $\theta$, and $a$, we obtain
	\[
	\|r_f^\varepsilon\|_{L^2(dt\otimes d\QQ)}
	\le\sum_{i=1}^4\|A_i^\varepsilon B_i^\varepsilon\|_{L^2(\mathfrak S)}\to0.
	\]
	Applying the same estimates on $\Omega\times\Omega'\times[0,1]^2$, with the control terms omitted, yields $\|r_\Phi^\varepsilon\|_{L^2(\QQ)}\to0$.  Therefore, we obtain \eqref{eq:costremainderlimit}.
	
We next prove \eqref{eq:costquotientconvergence}.
The quadratic-growth bounds and moment estimates yield
$\bar f_x\in L^4(dt\otimes d\mathbb Q)$, and
$\Phi_x(\bar X_T,\bar\mu_T)\in L^4(\mathbb Q)$.
By \eqref{eq:processdifferentiability},
$\|\bar f_x\cdot(\chi^\varepsilon-\chi)\|_{L^2(dt\otimes \drm \mathbb Q)} \to 0$,
and $\|\Phi_x(\bar X_T,\bar\mu_T)\cdot(\chi_T^\varepsilon-\chi_T)\|_{L^2(\mathbb Q)} \to 0$.

Now set $\bar A_t=A_2(\bar X_{t-},\bar\mu_t,\bar u_t)$.
By the Cauchy--Schwarz inequality together with \eqref{eq:costderivatives},
\[
|D_\mu f(t;\xi,k)|\le C\bar A_t\bigl(\|k_{t-}\|_{L^2(\mathbb Q)}+\|\xi_{t-}\|_{L^2(\mathbb Q)}\bigr),
\]
and
\[
\|D_\mu f(\cdot;\xi,k)\|_{L^2(dt\otimes d\mathbb Q)}\le C\bigl(\|\xi\|_{\mathcal S^2}+\|k\|_{\mathcal S^2}\bigr).
\]
Here $C$ absorbs the finite $L^2(dt\otimes d\mathbb Q)$ norm of $\bar A$ and the fourth moments of $\bar X$.
The same estimate applies to the two copied terminal-cost terms, with $A_2^0(\bar X_T,\bar\mu_T)$ in place of $\bar A_t$.
Combining \eqref{eq:processdifferentiability},
\eqref{eq:costquotientremainder}, and
\eqref{eq:costremainderlimit}, we obtain \eqref{eq:costquotientconvergence}.

	Finally, based on 
	$M^\varepsilon=\bar M+\varepsilon K^\varepsilon$,
	$f^\varepsilon=\bar f+\varepsilon\Delta_\varepsilon f$,
	and
	$\Phi_T^\varepsilon=\bar\Phi
	+\varepsilon\Delta_\varepsilon\Phi_T$, 
	\[ 
	\frac{J(u^\varepsilon)-J(\bar u)}{\varepsilon}
	=\EE\bigl[
	K_T^\varepsilon\bar\Phi
	+\bar M_T\Delta_\varepsilon\Phi_T
	+\varepsilon K_T^\varepsilon\Delta_\varepsilon\Phi_T
	\bigr]+\EE\int_0^T\bigl[
	K_t^\varepsilon\bar f_t
	+\bar M_t\Delta_\varepsilon f_t
	+\varepsilon K_t^\varepsilon\Delta_\varepsilon f_t
	\bigr]\,\drm t. 
	\]
	By \eqref{eq:costquotientconvergence},
	$\Delta_\varepsilon\Phi_T$ and
	$\Delta_\varepsilon f$ are bounded in their
	respective $L^2$ spaces. Also,
	$\sup_\varepsilon\|K^\varepsilon\|_{\mathcal S^2}<\infty$.
	Hence,
	\[ 
	\varepsilon\EE
	|K_T^\varepsilon\Delta_\varepsilon\Phi_T|
	\le C\varepsilon
	\|K^\varepsilon\|_{\mathcal S^2}
	\|\Delta_\varepsilon\Phi_T\|_{L^2(\QQ)} \to 0,\] and \[
	\varepsilon\EE\int_0^T
	|K_t^\varepsilon\Delta_\varepsilon f_t|\,\drm t
	\le C\varepsilon
	\|K^\varepsilon\|_{\mathcal S^2}
	\|\Delta_\varepsilon f\|_{L^2(\drm t\otimes \drm \QQ)}\to 0.\]
	
	Since $\bar f\in L^2(dt\otimes d\QQ)$,
	$\bar\Phi\in L^2(\QQ)$, and
	$0\le\bar M\le\CM$, combining with 
	\eqref{eq:processdifferentiability} and
	\eqref{eq:costquotientconvergence} imply
	\[
	\lim_{\varepsilon\downarrow0}
	\frac{J(u^\varepsilon)-J(\bar u)}{\varepsilon}
	=J_1(h),
	\]
	which proves the expansion.
	If $\bar u$ is optimal, then
	$J(\bar u+\varepsilon(v-\bar u))\ge J(\bar u)$
	for all $\varepsilon\in[0,1]$.
	Dividing by $\varepsilon>0$ and letting
	$\varepsilon\downarrow0$ yields
	$J_1(v-\bar u)\ge0$.
\end{proof}
	
	\subsection{Variation of the physical measure}
The dominating reference intensity is a representation parameter. We first verify invariance under changes to this parameter, including invariance of the admissible control class.

	\begin{proposition}\label{prop:referenceinvariance}
	Let $\QQ=\QQ^{(c)}$ for the reference measure used in the main construction, where
	$c\ge\max\{1,\lambda_*\}$. For any constant $d\ge\max\{1,\lambda_*\}$, let 
	\begin{equation}\label{eq:referencechange}
		\frac{\drm \QQ^{(d)}}{\drm \QQ^{(c)}}\bigg|_{\mathcal F_t}=:D_t=\exp\{n(c-d)t\}(d/c)^{\sum_{k=1}^nY_k(t)},
	\end{equation}
	Then $\QQ^{(d)}\sim\QQ^{(c)}$. Moreover, for
	every $u\in\Uad$, the unique solutions in the two dominating reference formulations satisfy
	\begin{equation}\label{eq:transferredM}
		X^{u,d}=X^{u,c},\qquad M_t^{u,d}=M_t^{u,c}/D_t,\qquad \mu_t^{u,d}=\mu_t^{u,c}.
	\end{equation}
	
\end{proposition}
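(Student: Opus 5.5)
Since $D$ is exactly the likelihood of Lemma~\ref{lem:boundedlikelihood} with the constant intensities $\lambda_k\equiv d$, the plan is to check the claims one at a time and let uniqueness in Theorem~\ref{thm:forward} do most of the work.

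First, identify $D$. By Lemma~\ref{lem:boundedlikelihood} applied with constant intensities $d$ and reference rate $c$, $D$ satisfies
\[
\drm D_t=D_{t-}\sum_k(d/c-1)\,\drm\Yt_k(t).
\]
It is strictly positive, and it is a true martingale because its exponent is bounded by $n\max\{c,d\}T$ in absolute value and $(d/c)^{Y}$ has all moments under Poisson laws. So $\QQ^{(d)}$ is a probability measure and $\QQ^{(d)}\sim\QQ^{(c)}$, with density $1/D_T>0$.

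Next, describe the noises under $\QQ^{(d)}$. Using the same computation as in Lemma~\ref{lem:girsanov}, or Jacod's theorem for marked point processes, the $Y_k$ are Poisson with intensity $d$ under $\QQ^{(d)}$. $W$ stays a Brownian motion, $\bN$ keeps its compensator, and independence is preserved. Independence follows because the density depends only on $Y$ and factorizes across $k$, so the joint law is a product. Since the measures are equivalent, the augmented filtrations coincide, and $\FF^Y$-predictability is unchanged. The moment condition $\EE\int|u|^r<\infty$ for all $r$ transfers between the two measures by Hölder's inequality, since $D_T$ and $D_T^{-1}$ lie in every $L^q$. Hence $\Uad$ is the same class for $c$ and for $d$.

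For the identification \eqref{eq:transferredM}, let $(X,M,\mu)=(X^{u,c},M^{u,c},\mu^{u,c})$ and set $\widetilde M=M/D$.

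\emph{Product formula.} Dividing the product formula \eqref{eq:product} for $M$ by that for $D$ gives
\[
\widetilde M_t=\exp\Bigl\{\sum_k\int_0^t(d-\lambda_k)\,\drm s\Bigr\}\prod\frac{\lambda_k}{d}.
\]
This is the product formula with reference rate $d$. Equivalently, Itô's formula for $M/D$ between jumps and at jumps shows that $\widetilde M$ solves
\[
\drm\widetilde M=\widetilde M_{-}\sum_k(\lambda_k/d-1)\,\drm(Y_k-dt).
\]

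\emph{Weighted marginal.} By Bayes' rule,
\[
\EE^{\QQ^{(d)}}[\widetilde M_t\varphi(X_t)]=\EE^{\QQ^{(c)}}[D_t\widetilde M_t\varphi(X_t)]=\langle\mu_t,\varphi\rangle .
\]
Here one uses that $D$ is a $\QQ^{(c)}$-martingale and $\widetilde M_tD_t=M_t$ is $\cF_t$-measurable, so $D_t$ can replace $D_T$.

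\emph{State equation.} The state SDE for $X$ is driven only by $W$ and $\widetilde N$. Its stochastic integrals are pathwise identical under equivalent measures: Itô integrals are invariant under equivalent changes of measure when the integrators remain semimartingales, and $W$ and $\widetilde N$ have unchanged characteristics. Thus $X$ solves the same equation under $\QQ^{(d)}$ with the flow $\mu$.

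So $(X,\widetilde M,\mu)$ solves the $d$-system. It has the required integrability, since $X\in\mathcal S^p$ under $\QQ^{(d)}$ by Hölder's inequality with $D_T\in L^q$, and $0\le\widetilde M\le e^{ndT}$ by the product formula. Because $d\ge\max\{1,\lambda_*\}$, Theorem~\ref{thm:forward} applies under $\QQ^{(d)}$, and its uniqueness gives $X^{u,d}=X$, $M^{u,d}=\widetilde M$, $\mu^{u,d}=\mu$.

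The main obstacle is the admissibility and noise-structure transfer: one has to state precisely that under $\QQ^{(d)}$ the tuple $(W,\bN,Y)$ has the independent structure with rate $d$ required by the setup. I would handle this with the exponential-martingale characterization of the joint law, namely by computing $\EE^{\QQ^{(d)}}\exp\{i\langle\alpha,W_t\rangle+\ldots\}$ directly from the factorized density.
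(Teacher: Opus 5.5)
Your proposal is correct and follows essentially the same route as the paper: show that $D$ is a strictly positive $\QQ^{(c)}$-martingale, note that $(W,\bN)$ keep their joint law and independence while the $Y_k$ become independent rate-$d$ Poisson processes, and transfer admissibility by H\"older's inequality using the moments of $D_T^{\pm1}$. You then verify that $(X^{u,c},M^{u,c}/D,\mu^{u,c})$ solves the $d$-system (rate-$d$ product formula, Bayes identity for the marginal, unchanged state equation) and conclude by the uniqueness in Theorem~\ref{thm:forward}. The only difference is in the martingale property of $D$: the paper checks it by a direct conditional-expectation computation with Poisson independent increments, whereas you use domination by an integrable variable. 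Your separate argument is the right move here, since Lemma~\ref{lem:boundedlikelihood} itself does not cover the case $d>c$.
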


\begin{proof}
	For every $q\in\R$, the exponential moment
	formula for Poisson processes gives
	\[
	\EE^{\QQ^{(c)}}D_t^q
	=
	\exp\left\{
	nq(c-d)t
	+nct\left[\left(\frac dc\right)^q-1\right]
	\right\}<\infty.
	\]
	In particular, $\EE^{\QQ^{(c)}}D_t=1$.
	Moreover, for $0\le s\le t\le T$,
	\[
	\begin{aligned}
		\EE^{\QQ^{(c)}}[D_t\mid\mathcal F_s]
		&=D_s e^{n(c-d)(t-s)}
		\EE^{\QQ^{(c)}}\left[
		\left(\frac dc\right)^{
			\sum_{k=1}^n(Y_k(t)-Y_k(s))}
		\right]\\
		&=D_s e^{n(c-d)(t-s)}
		e^{nc(t-s)(d/c-1)}\\
		&=D_s.
	\end{aligned}
	\]
	Thus $D$ is a strictly positive
	$\QQ^{(c)}$-martingale, and
	$\QQ^{(d)}\sim\QQ^{(c)}$.
	
	Since $D_T$ is measurable with respect
	to $\mathcal F_T^Y$, the joint law of
	$(W,\bar N)$ and its independence from
	$Y$ are preserved under $\QQ^{(d)}$.
	The processes $Y_1,\ldots,Y_n$ are
	independent Poisson processes with
	intensity $d$ under $\QQ^{(d)}$.
	
	For every finite $r\ge1$,
	H\"older's inequality gives
	\[ \EE^{\QQ^{(d)}}\int_0^T|u_t|^r\,\drm t =\EE^{\QQ^{(c)}}\left[
		D_T\int_0^T|u_t|^r\,\drm t
		\right]\le
		\left(\EE^{\QQ^{(c)}}D_T^2\right)^{1/2}
		\left(
		T\EE^{\QQ^{(c)}}
		\int_0^T|u_t|^{2r}\,\drm t
		\right)^{1/2}. 
	\]
	The reverse implication follows similarly
	from
	\[
	\EE^{\QQ^{(d)}}D_T^{-r}
	=\EE^{\QQ^{(c)}}D_T^{1-r}<\infty.
	\]
	Since the two measures are equivalent,
	their completed observation filtrations
	coincide. Hence the admissible control
	class is unchanged.
	
	Fix $u\in\cU_{ad}$ and let
	$(X^{u,c},M^{u,c},\mu^{u,c})$
	be the solution under $\QQ^{(c)}$.
	Define $
	\widetilde X=X^{u,c},$ $
	\widetilde M=\frac{M^{u,c}}{D},$ and $
	\widetilde\mu=\mu^{u,c}.$ 
	By \eqref{eq:product} and
	\eqref{eq:referencechange},
	\[ 
	\widetilde M_t
	=
	\exp\left\{
	\sum_{k=1}^n\int_0^t
	(d-\lambda_k^u(s))\,\drm s
	\right\}\times
	\prod_{k=1}^n
	\prod_{\substack{0<s\le t
			\Delta Y_k(s)=1}}
	\frac{\lambda_k^u(s)}d.
	\]
	Since $0\le\lambda_k^u\le\lambda^*\le d$, $
	0\le\widetilde M_t\le e^{ndt}.$ 
	Thus $\widetilde M$ satisfies the
	likelihood equation with reference
	intensity $d$.
	
	Furthermore, for every finite $p\ge2$,
	\[ \EE^{\QQ^{(d)}}
	\sup_{t\le T}|\widetilde X_t|^p\le
	\left(\EE^{\QQ^{(c)}}D_T^2\right)^{1/2}
	\left(
	\EE^{\QQ^{(c)}}
	\sup_{t\le T}|X_t^{u,c}|^{2p}
	\right)^{1/2}
	<\infty.
	\]
	Since the state noises have the same
	characteristics under the two reference
	measures, $\widetilde X$ satisfies the
	state equation with marginal
	$\widetilde\mu$ under $\QQ^{(d)}$. Then, for every bounded Borel function
	$\varphi$, we have
	\[\EE^{\QQ^{(d)}}[
		\widetilde M_t\varphi(\widetilde X_t)]=\EE^{\QQ^{(c)}}[
		D_t\widetilde M_t
		\varphi(\widetilde X_t)]=\EE^{\QQ^{(c)}}[
		M_t^{u,c}\varphi(X_t^{u,c})]=\inner{\mu_t^{u,c}}{\varphi}
		=\inner{\widetilde\mu_t}{\varphi}.
	\]
	Therefore
	$(\widetilde X,\widetilde M,
	\widetilde\mu)$
	solves the system under $\QQ^{(d)}$.
	By the uniqueness in
	Theorem~\ref{thm:forward}, we have $X^{u,d}=X^{u,c}$, $ M^{u,d}=\frac{M^{u,c}}D$, and $	\mu^{u,d}=\mu^{u,c}.$ 
	
	Finally, since  $
	M_T^{u,d}\,d\QQ^{(d)}=M_T^{u,c}\,d\QQ^{(c)}$, and $
	M_t^{u,d}\,d\QQ^{(d)}
	\big|_{\mathcal F_t} =M_t^{u,c}\,d\QQ^{(c)}
	\big|_{\mathcal F_t}.$ 
	Hence, the physical probability measures coincide. Moreover,
	\[ \begin{aligned}
		J^{(d)}(u)
		=&\EE^{\QQ^{(d)}}\bigg[
		M_T^{u,d}\Phi(X_T^{u,d},\mu_T^{u,d})+\int_0^T M_t^{u,d}
		f(t,X_{t-}^{u,d},\mu_t^{u,d},u_t)
		\,\drm t\bigg]\\
		=& \EE^{\QQ^{(c)}}\bigg[
		M_T^{u,c}\Phi(X_T^{u,c},\mu_T^{u,c})+\int_0^T M_t^{u,c}
		f(t,X_{t-}^{u,c},\mu_t^{u,c},u_t)
		\,\drm t\bigg]\\
		= &J^{(c)}(u).
	\end{aligned}
	\]
\end{proof}
	
For notational simplicity, we set $\bar\PP=\mathbb P^{\bar u}$ and $\mathbb P^\varepsilon=\mathbb P^{u^\varepsilon}$. For any finite signed measure $\nu$, its total variation norm is
\[
\|\nu\|_{\mathrm{TV}}
=|\nu|(\Omega)
=\sup_{\substack{F:\Omega\to\mathbb R\\|F|\le1}}
\left|\int_\Omega F\,\mathrm{d}\nu\right|.
\]

\begin{proposition}
	\label{prop:singular}
	
	\[
	\frac{\PP^\varepsilon-\bar\PP}{\varepsilon}
	\xrightarrow{\mathrm{TV}}\dot\PP,
	\qquad
	\frac{d\dot\PP}{d\QQ}=K_T.
	\]
	Set $A=\{\bar M_T>0\}$. Define $S_T=K_T/\bar M_T$ on $A$ and $S_T=0$ on $A^c$, and set $\nu^\perp(B)=\EE[K_T\ind_{A^c}\ind_B]$. 
	Then
	\begin{equation}\label{eq:Lebesguedecomp} 
		\dot\PP=S_T\bar\PP+\nu^\perp,
	\end{equation}
	with $ 	\nu^\perp\ge0$ and $\nu^\perp\perp\bar\PP.$ Moreover, $  
	S_T\in L^1(\bar\PP),$ and $\EE^{\bar\PP}S_T=-\nu^\perp(\Omega).$ 
	
	Therefore, for any $F\in L^q(\QQ)$, $q>1$,
	\begin{equation}\label{eq:physicalderivative}
		\left.\frac{d}{d\varepsilon}
		\EE^{\PP^\varepsilon}F\right|_{0+}
		=\EE^{\bar\PP}[S_TF]
		+\int_\Omega F\,d\nu^\perp.
	\end{equation}
\end{proposition}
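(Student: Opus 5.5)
The plan is to work entirely on the fixed space $(\Omega,\cF_T,\QQ)$ and reduce every claim to properties of the likelihood difference quotient $K_T^\varepsilon=(M_T^\varepsilon-\bM_T)/\varepsilon$. By \eqref{eq:physical}, the signed measure $(\PP^\varepsilon-\bar\PP)/\varepsilon$ has $\QQ$-density $K_T^\varepsilon$. For any two finite signed measures absolutely continuous with respect to $\QQ$, the total variation distance equals the $L^1(\QQ)$ distance of their densities. Hence, defining $\dot\PP(B)=\EE[K_T\ind_B]$, which is a finite signed measure because $K_T\in L^2(\QQ)$ by Proposition~\ref{prop:variation}, we get
\[
\Bigl\|\tfrac{\PP^\varepsilon-\bar\PP}{\varepsilon}-\dot\PP\Bigr\|_{\mathrm{TV}}=\EE|K_T^\varepsilon-K_T|\le\|K^\varepsilon-K\|_{\Sp^2}\to0
\]
by \eqref{eq:processdifferentiability}. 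This settles the first claim and identifies $d\dot\PP/d\QQ=K_T$.

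For the decomposition, I split $K_T=K_T\ind_A+K_T\ind_{A^c}$. On $A$ we have $K_T\ind_A=S_T\bM_T$, so $B\mapsto\EE[K_T\ind_A\ind_B]=\EE^{\bar\PP}[S_T\ind_B]$. The remaining piece is exactly $\nu^\perp$. Singularity is immediate: $\bar\PP(A^c)=\EE[\bM_T\ind_{A^c}]=0$, while $\nu^\perp$ is carried by $A^c$.

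The step needing an argument is the sign $\nu^\perp\ge0$, that is, $K_T\ge0$ $\QQ$-a.s.\ on $A^c=\{\bM_T=0\}$. I would obtain this as a limit rather than from the linear equation \eqref{eq:K}. On $A^c$ one has $K_T^\varepsilon=M_T^\varepsilon/\varepsilon\ge0$ by \eqref{eq:pathbound}. Since $K_T^\varepsilon\to K_T$ in $L^1(\QQ)$, a subsequence converges a.s., which gives $K_T\ind_{A^c}\ge0$ a.s. Integrability follows from $\EE^{\bar\PP}|S_T|=\EE[|K_T|\ind_A]\le\EE|K_T|<\infty$. Moreover, Proposition~\ref{prop:variation} gives $\EE K_T=0$, so $\EE^{\bar\PP}S_T+\nu^\perp(\Omega)=\EE K_T=0$.

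For \eqref{eq:physicalderivative}, take $F\in L^q(\QQ)$ with $q>1$ and let $q'$ be the conjugate exponent. Then
\[
\frac{\EE^{\PP^\varepsilon}F-\EE^{\bar\PP}F}{\varepsilon}=\EE[K_T^\varepsilon F]\to\EE[K_TF],
\]
by Hölder's inequality, since $\|K_T^\varepsilon-K_T\|_{L^{q'}}\to0$ by \eqref{eq:processdifferentiability} for every finite exponent (using $p=\max\{2,q'\}$). Splitting $\EE[K_TF]$ over $A$ and $A^c$ as above gives $\EE^{\bar\PP}[S_TF]+\int F\,d\nu^\perp$. Here $S_TF\in L^1(\bar\PP)$ because $\EE^{\bar\PP}|S_TF|=\EE[|K_TF|\ind_A]<\infty$.

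The only mildly delicate point is the nonnegativity of the singular part. The a.s.\ subsequence argument handles it without needing inverse likelihoods.
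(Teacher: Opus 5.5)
Your proposal is correct and follows essentially the same route as the paper: identify the $\QQ$-density $K_T^\varepsilon$, use $L^1(\QQ)$ convergence for the TV limit, split over $A$ and $A^c$, use $K_T^\varepsilon=M_T^\varepsilon/\varepsilon\ge0$ on $A^c$ for the sign, use $\EE K_T=0$ for the mass identity, and apply H\"older for $F\in L^q(\QQ)$. The only cosmetic difference is that you get $K_T\ind_{A^c}\ge0$ from an a.s.\ convergent subsequence, whereas the paper bounds $\EE[(K_T)^-\ind_{A^c}]\le\EE[|K_T^\varepsilon-K_T|\ind_{A^c}]\to0$; the two arguments are equivalent.
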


\begin{proof}
	Since $
	\frac{\drm \PP^\varepsilon}{\drm \QQ}=M_T^\varepsilon,$ and $
	\frac{\drm \bar\PP}{\drm \QQ}=\bar M_T$,  we have, for every $B\in\mathcal F_T$, $
	\frac{\PP^\varepsilon(B)-\bar\PP(B)}
	{\varepsilon}
	=\EE[K_T^\varepsilon\ind_B].$ 
	Consequently, by Theorem~\ref{thm:differentiability}, $ 
	\left\|
	\frac{\PP^\varepsilon-\bar\PP}{\varepsilon}
	-\dot\PP
	\right\|_{\mathrm{TV}}=\int_\Omega
	\left|
	\frac{M_T^\varepsilon-\bar M_T}{\varepsilon}
	-K_T
	\right|\drm \QQ=\mathbb E^\QQ|K_T^\varepsilon-K_T| \to 0.$
	
	Let $A=\{\bar M_T>0\}$. Since
	$\bar M_T=0$ on $A^c$ and
	$M_T^\varepsilon\ge0$, we have $
	K_T^\varepsilon
	=\frac{M_T^\varepsilon}{\varepsilon}
	\ge0$ on $A^c$.
	
	Taking $x^-=\max\{-x,0\}$, we obtain $
	\EE[(K_T)^-\ind_{A^c}]\le\EE[
	|K_T^\varepsilon-K_T|\ind_{A^c}]\to 0$.
	Hence ,$K_T\ge0$ on $A^c$,
	$\QQ$-almost surely, and $
	\nu^\perp(B)
	=\EE[K_T\ind_{A^c}\ind_B]\ge0.$ 
	
	Moreover, $
	\bar\PP(A^c)
	=\EE[\bar M_T\ind_{A^c}]=0,$ and $
	\nu^\perp(A)=0.$ 
	Thus, $\nu^\perp\perp\bar\PP$.
	
	Moreover, by the definition of $S_T$, $
	\EE^{\bar\PP}|S_T|=\EE[\bar M_T|S_T|] =\EE[|K_T|\ind_A]<\infty.$  Then, for every $B\in\mathcal F_T$,
	\[
	\begin{aligned}
		\dot\PP(B)
		&=\EE[K_T\ind_B]\\
		&=\EE[K_T\ind_A\ind_B]
		+\EE[K_T\ind_{A^c}\ind_B]\\
		&=\EE^{\bar\PP}[S_T\ind_B]
		+\nu^\perp(B).
	\end{aligned}
	\]
	This proves the decomposition
	\eqref{eq:Lebesguedecomp}.
	
	Since $K$ is a $\QQ$-martingale
	with $K_0=0$, $
	0=\EE K_T =\EE[K_T\ind_A]
	+\EE[K_T\ind_{A^c}] =\EE^{\bar\PP}S_T
	+\nu^\perp(\Omega). $ 
	Hence, $
	\EE^{\bar\PP}S_T
	=-\nu^\perp(\Omega).$ 
	
	Therefore, for any bounded $\mathcal F_T$-measurable random variable $F$,
	\[ \frac{\EE^{\PP^\varepsilon}F-\EE^{\bar\PP}F}{\varepsilon}=\EE[K_T^\varepsilon F]\to \EE[K_TF]=\EE^{\bar\PP}[S_TF]
	+\int_\Omega F\,d\nu^\perp. 
	\]
	This proves \eqref{eq:physicalderivative}.
	
	For $F\in L^q(\QQ)$ with $q>1$,
	let $r=q/(q-1)$. By
	\eqref{eq:processdifferentiability},
	$K_T^\varepsilon\to K_T$ in $L^r(\QQ)$.
	Hence by using H\"older's inequality, $\left|\EE[(K_T^\varepsilon-K_T)F]\right|\le
	\|K_T^\varepsilon-K_T\|_{L^r(\QQ)}	\|F\|_{L^q(\QQ)}\to 0.$
	Thus \eqref{eq:physicalderivative} also holds for such $F$.
	
	Finally, by
	Proposition~\ref{prop:referenceinvariance},
	the physical measures
	$\PP^\varepsilon$ and $\bar\PP$
	are independent of the reference
	intensity $c$. The uniqueness of the
	total-variation limit and the Lebesgue
	decomposition implies that
	$S_T\bar\PP$ and $\nu^\perp$
	are independent of $c$.
\end{proof}
We now apply this decomposition to the cost. Define
\[G^\varepsilon=\Phi(X_T^\varepsilon,\mu_T^\varepsilon)
	+\int_0^Tf(t,X_{t-}^\varepsilon,\mu_t^\varepsilon,u_t^\varepsilon)
	\dd t,\]
	and \[
	\dot G=D\Phi_T+\int_0^T Df_t\dd t.
\]
Thus $G^0=\bar\Phi+\int_0^T\bar f_t\dd t$.
The growth bounds and \eqref{eq:costquotientconvergence} imply
$G^0,\dot G\in L^2(\QQ)$ and $
\Delta_\varepsilon G
:=\frac{G^\varepsilon-G^0}{\varepsilon}
\to\dot G$ in $L^2(\QQ).$

By the previously established cost-quotient of $\Phi$ and $f$ convergence given in \eqref{eq:costquotientconvergence}, by using the triangle inequality and
Cauchy--Schwarz inequality, we have 
\[ 
	\left\|
	\frac{G^\varepsilon-G^0}{\varepsilon}-\dot G
	\right\|_{L^2(\QQ)}\le\left\|\frac{\Phi_T^\varepsilon-\bar\Phi}{\varepsilon}-D\Phi_T
	\right\|_{L^2(\QQ)}+\sqrt T
	\left\|
	\frac{f^\varepsilon-\bar f}{\varepsilon}-Df
	\right\|_{L^2(\mathrm dt\otimes\mathrm d\QQ)}
\to 0.
\]
Hence, $ \Delta_\varepsilon G:=\frac{G^\varepsilon-G^0}{\varepsilon}
\to\dot G $ in $L^2(\QQ)$.

Moreover, by Theorem~\ref{thm:physicalrepresentation},
$J(u^\varepsilon)=\EE[M_T^\varepsilon G^\varepsilon]$.
Then 
\[
\frac{J(u^\varepsilon)-J(\bu)}{\varepsilon}
=
\EE\left[
K_T^\varepsilon G^0
+\bM_T\Delta_\varepsilon G
+\varepsilon K_T^\varepsilon\Delta_\varepsilon G
\right].
\]
The first two terms converge in expectation. For the last term, $
\varepsilon
\left|\EE[K_T^\varepsilon\Delta_\varepsilon G]\right|
\leq
\varepsilon\|K_T^\varepsilon\|_{L^2(\QQ)}
\|\Delta_\varepsilon G\|_{L^2(\QQ)}
\to 0$.
Combining with \eqref{eq:costquotientconvergence} , we have 
$$ J_1(h)=\EE[\bM_T\dot G+K_TG^0]=\EE^{\overline{\PP}}[\dot G]
	+\EE^{\overline{\PP}}[S_TG^0]
	+\int_\Omega G^0\,\mathrm d\nu^\perp.$$
 
Since $ 
\EE^{\overline{\PP}}|\dot G|
\leq C_M\EE|\dot G|<\infty,$ and $ 
\EE^{\overline{\PP}}|S_TG^0|
+\int_\Omega|G^0|\,\mathrm d\nu^\perp
=\EE|K_TG^0|<\infty$,  thus all three terms are absolutely integrable.

 The final term arising in the cost variation is supported on paths whose histories carry zero probability under the baseline measure. More broadly, identical singular contributions arise within the derivative of $\mathbb{E}^{\mathbb{P}^\varepsilon}[F]$ for any fixed bounded functional $F$. Provided $\nu^\perp\neq 0$, not all such derivatives admit representation via a single integrable score function under $\overline{\mathbb{P}}$, as demonstrated by the corollary below.
 
	\begin{corollary}\label{cor:noscore}
		Suppose that $\nu^\perp\neq0$. Then there
	exists no $Z\in L^1(\bar\PP)$ such that $
	\left.\frac{d}{d\varepsilon}
	\EE^{\PP^\varepsilon}[F]\right|_{0+}
	=\EE^{\bar\PP}[ZF]$ 
	for every bounded $\mathcal F_T$-measurable
	random variable $F$.
	\end{corollary}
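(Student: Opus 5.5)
We argue by contradiction, using the decomposition of Proposition~\ref{prop:singular}. Suppose some $Z\in L^1(\bar\PP)$ satisfies
\[
\left.\frac{d}{d\varepsilon}\EE^{\PP^\varepsilon}[F]\right|_{0+}=\EE^{\bar\PP}[ZF]
\]
for every bounded $\mathcal F_T$-measurable $F$. By \eqref{eq:physicalderivative}, valid for bounded $F$, the left side equals $\EE^{\bar\PP}[S_TF]+\int_\Omega F\,d\nu^\perp$. Hence, for every bounded $F$,
\[
\EE^{\bar\PP}[(Z-S_T)F]=\int_\Omega F\,d\nu^\perp .
\]
Both $Z$ and $S_T$ lie in $L^1(\bar\PP)$, so the left side is well defined.

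The key step is to test with $F=\ind_{A^c}$, where $A=\{\bar M_T>0\}$. Since $\bar\PP(A^c)=\EE[\bar M_T\ind_{A^c}]=0$, the left side vanishes: $(Z-S_T)\ind_{A^c}=0$ $\bar\PP$-a.s., and integrability of $Z-S_T$ makes the expectation zero. The right side is $\nu^\perp(A^c)=\EE[K_T\ind_{A^c}]$.

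Since $\nu^\perp\ge0$ and $\nu^\perp(A)=0$ (both shown in Proposition~\ref{prop:singular}), we have $\nu^\perp(A^c)=\nu^\perp(\Omega)$. Because $\nu^\perp$ is a nonzero nonnegative measure, $\nu^\perp(\Omega)>0$. This gives $0=\nu^\perp(\Omega)>0$, a contradiction, so no such $Z$ exists.

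The argument is essentially uniqueness of the Lebesgue decomposition: a $\bar\PP$-absolutely continuous signed measure $Z\bar\PP$ cannot agree with $\dot\PP$ when $\dot\PP$ has a nonzero singular part. The only point needing care is that $F$ ranges over bounded functionals, and the indicator of a $\bar\PP$-null set carrying the mass of $\nu^\perp$ is such a functional. Since $A^c\in\mathcal F_T$, this indicator is admissible, and there is no further obstacle.
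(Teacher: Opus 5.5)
Your proof is correct and is essentially the paper's argument: test with $F=\ind_{A^c}$, which is $\bar\PP$-null so $\EE^{\bar\PP}[ZF]=0$, while Proposition~\ref{prop:singular} gives derivative $\nu^\perp(\Omega)>0$ for $\PP^\varepsilon(A^c)$. Your detour through $Z-S_T$ is harmless but unnecessary, since the paper reads off the derivative of $\PP^\varepsilon(A^c)$ directly.
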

	\begin{proof}
	Let $A=\{\bar M_T>0\}$. By
	Proposition~\ref{prop:singular}, $
	\bar\PP(A^c)=0,$ and $ 
	\left.\frac{d}{d\varepsilon}
	\PP^\varepsilon(A^c)\right|_{0+}
	=\nu^\perp(\Omega)>0.$ 
	Taking $F=\ind_{A^c}$ gives $
	\EE^{\bar\PP}[ZF]=0$ 
	for every $Z\in L^1(\bar\PP)$,
	which is a contradiction.
\end{proof}
	
	The following product identity gives the pathwise interpretation of the
	boundary contribution in terms of the vanishing likelihood multipliers.

		\begin{proposition}\label{prop:productvariation}
		Let the jump times up to time $t$ be $0<\tau_1<\cdots<\tau_{J(t)}\le t$, where $k_j$ denotes the channel at time $\tau_j$.
		Define  $
		A_t=\sum_{k=1}^n\int_0^t(c-\bar\lambda_k(s))ds,$ $ B_t=\sum_{k=1}^n\int_0^tD\lambda_k(s)ds$, $a_j=\frac{\bar\lambda_{k_j}(\tau_j)}c$, and $\dot a_j=\frac{D\lambda_{k_j}(\tau_j)}c$. Then, 
		\begin{equation}\label{eq:Kproduct}
			K_t=e^{A_t}\left[
			-B_t\prod_{i=1}^{J(t)}a_i
			+\sum_{j=1}^{J(t)}\dot a_j
			\prod_{\substack{1\leq i\leq J(t)\\i\ne j}}a_i\right].
		\end{equation}
	\end{proposition}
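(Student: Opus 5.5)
The plan is to solve the linear equation \eqref{eq:K} pathwise, exactly as the product formula \eqref{eq:product} was obtained for $\bar M$ in Lemma~\ref{lem:boundedlikelihood}. Then verify that the right-hand side of \eqref{eq:Kproduct} satisfies the same pathwise recursion, and conclude by uniqueness. Throughout, we work on the full-$\QQ$-measure event on which $Y$ has finitely many jumps on $[0,T]$ and no two channels jump simultaneously.

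\textbf{Step 1: the pathwise recursion for $K$.} Write $\drm\Yt_k(t)=\drm Y_k(t)-c\,\drm t$. The integrand in \eqref{eq:K} is predictable, because $D\lambda_k(t)$ is built from $\chi_{t-}$, the deterministic-in-$\omega$ term $D_\mu\lambda_k(t;\chi,K)$ and $h_t$. Hence \eqref{eq:K} splits into two parts.
\begin{itemize}
\item On each open interval between consecutive jumps it is the random linear ODE
\[
\frac{\drm K_t}{\drm t}=\sum_{k=1}^n(c-\bar\lambda_k(t))K_t-\bar M_t\sum_{k=1}^nD\lambda_k(t),
\]
valid for a.e.\ $t$, with $K$ continuous there.
\item At a jump time $\tau_j$ of channel $k_j$ it is the jump relation
\[
K_{\tau_j}=K_{\tau_j-}+\Bigl(\tfrac{\bar\lambda_{k_j}(\tau_j)}{c}-1\Bigr)K_{\tau_j-}+\tfrac{\bar M_{\tau_j-}}{c}D\lambda_{k_j}(\tau_j)=a_jK_{\tau_j-}+\dot a_j\bar M_{\tau_j-}.
\]
\end{itemize}
Since $K_0=0$ and the ODE has integrable coefficients on a finite interval, these two rules determine $K$ uniquely path by path. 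This is consistent with the uniqueness in Proposition~\ref{prop:variation}.

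\textbf{Step 2: verify the candidate.} Let $P_t=\prod_{i=1}^{J(t)}a_i$, so that $\bar M_t=e^{A_t}P_t$ by \eqref{eq:product}. Let $\widetilde K_t$ denote the right-hand side of \eqref{eq:Kproduct}, and write its bracket as $e^{-A_t}\widetilde K_t=-B_tP_t+Q_t$, where $Q_t=\sum_{j\le J(t)}\dot a_j\prod_{i\ne j}a_i$.
\begin{itemize}
\item Between jumps, $P$ and $Q$ are constant, while $A$ and $B$ are absolutely continuous with $\dot A_t=\sum_k(c-\bar\lambda_k(t))$ and $\dot B_t=\sum_kD\lambda_k(t)$. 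Differentiating $\widetilde K$ therefore gives
\[
\frac{\drm \widetilde K_t}{\drm t}=\dot A_t\widetilde K_t-e^{A_t}P_t\dot B_t,
\]
which is exactly the ODE of Step 1, since $e^{A_t}P_t=\bar M_t$.
\item At $\tau_j$, the processes $A$ and $B$ are continuous, $P_{\tau_j}=a_jP_{\tau_j-}$ and $Q_{\tau_j}=a_jQ_{\tau_j-}+\dot a_jP_{\tau_j-}$. Hence
\[
\widetilde K_{\tau_j}=a_j\widetilde K_{\tau_j-}+\dot a_j\,e^{A_{\tau_j}}P_{\tau_j-}=a_j\widetilde K_{\tau_j-}+\dot a_j\bar M_{\tau_j-},
\]
which is the jump relation of Step 1.
\end{itemize}
Together with $\widetilde K_0=0$, induction over the finitely many jumps gives $\widetilde K=K$ on this event, and hence $\QQ$-a.s.

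\textbf{Main obstacle.} The delicate point is justifying the pathwise reading of \eqref{eq:K}. The stochastic integral against $\Yt_k$ must coincide with the pathwise Lebesgue--Stieltjes integral, and $D\lambda_k(\tau_j)$ must be evaluated consistently at jump times. I would handle this as follows.
\begin{itemize}
\item Because the integrand is predictable and satisfies $\EE\int_0^T|{\cdot}|\,\drm t<\infty$ (from the $p=2$ bound in Proposition~\ref{prop:variation}), the integral against the finite-variation process $\Yt_k$ equals its pathwise Stieltjes integral a.s.
\item Predictability also means that $D\lambda_k(\tau_j)$ depends only on $\chi_{\tau_j-}$ and on $K$ through the deterministic functional $D_\mu\lambda_k$. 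So the values $\dot a_j$ are fixed before the jump, and no circularity arises.
\end{itemize}
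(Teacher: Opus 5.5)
Your proposal is correct and follows essentially the same route as the paper: you show that the candidate $e^{A_t}(Q_t-B_tP_t)$ satisfies the same inter-jump linear ODE and the same jump recursion $K_{\tau_j}=a_jK_{\tau_j-}+\dot a_j\bar M_{\tau_j-}$ as \eqref{eq:K}, and then conclude from $K_0=0$ by uniqueness, justifying the pathwise reading of the integral through the compensator identity much as the paper checks that $D\lambda_k$ is finite at the jump times. One small correction: $D_\mu\lambda_k(t;\chi,K)$ is not deterministic in $\omega$, since by the convention after \eqref{eq:Dmu} it is evaluated at the unprimed baseline arguments $(t,\bar X_{t-},\bar\mu_t,\bar u_t)$; it is, however, predictable once $(\chi,K)$ is fixed, and that is all your argument uses.
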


\begin{proof}
	Since $D\lambda_k$ is predictable and
	belongs to $L^p(dt\otimes d\QQ)$ for every
	finite $p\ge2$, then  
	\[
	\EE\sum_{k=1}^n\int_0^T
	|D\lambda_k(s)|^p\,\drm Y_k(s)
	=c\sum_{k=1}^n\EE\int_0^T
	|D\lambda_k(s)|^p\,\drm s<\infty.
	\]
	In particular, $D\lambda_k$ is finite
	at the observation jump times almost surely.
	
	For $t\in[0,T]$, let $P_t=\prod_{i=1}^{J(t)}a_i$, $S_t=\sum_{j=1}^{J(t)}\dot a_j\prod_{\substack{1\le i\le J(t)\\i\ne j}}a_i$,   
	and define $\mathcal K_t=e^{A_t}(S_t-B_tP_t)$. By \eqref{eq:product}, we have $\bar M_t=e^{A_t}P_t$.
	
	Since 	between successive jumps of $Y$, both $P_t$ and $S_t$ are constant, whereas $\drm A_t=\sum_{k=1}^n(c-\bar\lambda_k(t))\,\drm t$, and $\drm B_t=\sum_{k=1}^nD\lambda_k(t)\,\drm t.$  Therefore, 
	\[ 
	\drm\mathcal K_t
	=e^{A_t}(S_t-B_tP_t)\,\drm A_t
	-e^{A_t}P_t\,\drm B_t=\left[
	\sum_{k=1}^n(c-\bar\lambda_k(t))
	\mathcal K_t
	-\bar M_t\sum_{k=1}^nD\lambda_k(t)
	\right]\drm t.
	\]
	
	Moreover,	at an observation jump time $\tau_j$, by the definitions of $P$ and $S$, we have  $
	P_{\tau_j}=a_jP_{\tau_j-}$, and $S_{\tau_j}=a_jS_{\tau_j-}+\dot a_jP_{\tau_j-}.$  Since $A$ and $B$ are continuous, thus
	\[
	\begin{aligned}
		\mathcal K_{\tau_j}
		&=e^{A_{\tau_j}}
		\left(
		a_jS_{\tau_j-}
		+\dot a_jP_{\tau_j-}
		-B_{\tau_j}a_jP_{\tau_j-}
		\right)\\
		&=a_j\mathcal K_{\tau_j-}
		+\dot a_j\bar M_{\tau_j-}\\
		&=\frac{\bar\lambda_{k_j}(\tau_j)}c
		\mathcal K_{\tau_j-}
		+\frac{\bar M_{\tau_j-}}c
		D\lambda_{k_j}(\tau_j).
	\end{aligned}
	\]
	Thus $\mathcal K$ satisfies the same
	continuous dynamics and jump relation
	as \eqref{eq:K}. Since
	$\mathcal K_0=K_0=0$, then from the uniqueness of the
	linear equation of $\cK$, we derive that 
	$\mathcal K=K$,  which implies that  \eqref{eq:Kproduct}.
	
\end{proof}
		\begin{corollary}
		Under the same Assumptions as Proposition \ref{prop:variation},   if none of the multipliers of  \eqref{eq:Kproduct} is zero, then
		\begin{equation}\label{eq:scoreonsupport}
			\frac{K_t}{\bar M_t}
			=-\sum_k\int_0^tD\lambda_k(s)ds
			+\sum_{j=1}^{J(t)}\frac{D\lambda_{k_j}(\tau_j)}{
				\bar\lambda_{k_j}(\tau_j)}.
		\end{equation}
		If at least two multipliers  of  \eqref{eq:Kproduct}  are zero, then $K_t=0$. If exactly one, say
		$a_{j_0}$, is zero, then
		\begin{equation}\label{eq:onezerofactor}
			K_t=e^{A_t}\dot a_{j_0}\prod_{i\ne j_0}a_i\ge0.
		\end{equation}
		The last sign assertion concerns the feasible one-sided variation
		$u^\varepsilon=\bar u+\varepsilon(v-\bar u)$.
	\end{corollary}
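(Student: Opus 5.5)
The corollary follows from the product formula \eqref{eq:Kproduct} of Proposition~\ref{prop:productvariation}, together with the identity $\bar M_t=e^{A_t}\prod_{i=1}^{J(t)}a_i$ from \eqref{eq:product}. The plan is to work pathwise on a full-measure set where $J(t)<\infty$ and all $D\lambda_{k_j}(\tau_j)$ are finite, as already noted in the proof of Proposition~\ref{prop:productvariation}. Then we split into three cases according to how many of the multipliers $a_1,\dots,a_{J(t)}$ vanish.

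\emph{No multiplier vanishes.} Then $\bar M_t>0$, and we divide \eqref{eq:Kproduct} by $\bar M_t$. The factor $e^{A_t}$ cancels. The first term gives $-B_t=-\sum_k\int_0^tD\lambda_k(s)\,ds$. In the $j$th summand, the product over $i\ne j$ divided by the full product leaves $1/a_j$, so it contributes $\dot a_j/a_j=D\lambda_{k_j}(\tau_j)/\bar\lambda_{k_j}(\tau_j)$. This gives \eqref{eq:scoreonsupport}.

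\emph{At least two multipliers vanish.} Then $\prod_i a_i=0$. Each product $\prod_{i\ne j}a_i$ still contains at least one zero factor, so every summand in \eqref{eq:Kproduct} vanishes and $K_t=0$.

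\emph{Exactly one multiplier $a_{j_0}$ vanishes.} The full product and every summand with $j\ne j_0$ vanish, since they contain $a_{j_0}$. Only $e^{A_t}\dot a_{j_0}\prod_{i\ne j_0}a_i$ survives, which is the identity in \eqref{eq:onezerofactor}.

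The main obstacle is the sign claim in \eqref{eq:onezerofactor}. The factors $e^{A_t}$ and $a_i$ are nonnegative, so it suffices to show $\dot a_{j_0}\ge0$, that is, $D\lambda_{k_0}(\tau_{j_0})\ge0$ whenever $\bar\lambda_{k_0}(\tau_{j_0})=0$. I would not try to derive this from the formula for $D\lambda$ directly. Instead I would use feasibility of the one-sided perturbation. By Proposition~\ref{prop:remainder} and \eqref{eq:processdifferentiability}, $\Delta_\varepsilon\lambda_k\to D\lambda_k$ in $L^p(dt\otimes d\QQ)$.

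Predictability of both sides lets us transfer this convergence to the jump times. Using $\EE\int|g|\,dY_k=c\,\EE\int|g|\,dt$ for predictable $g$, we get
\[
\EE\sum_k\int_0^T|\Delta_\varepsilon\lambda_k-D\lambda_k|\,dY_k\to0 .
\]
Along a subsequence, therefore, $\Delta_\varepsilon\lambda_{k_j}(\tau_j)\to D\lambda_{k_j}(\tau_j)$ for all jumps, almost surely.

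At a jump where $\bar\lambda_{k_0}(\tau_{j_0})=0$, we have $\Delta_\varepsilon\lambda_{k_0}(\tau_{j_0})=\lambda^\varepsilon_{k_0}(\tau_{j_0})/\varepsilon\ge0$, because intensities are nonnegative. Passing to the limit yields $\dot a_{j_0}\ge0$, hence $K_t\ge0$.

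This is exactly where the one-sidedness $\varepsilon\downarrow0$ of $u^\varepsilon=\bar u+\varepsilon(v-\bar u)$ enters, consistent with the remark in the statement. As a consistency check, the conclusion matches the nonnegativity of $K_T$ on $\{\bar M_T=0\}$ obtained in Proposition~\ref{prop:singular}.
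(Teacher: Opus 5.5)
Your proposal is correct and follows essentially the same route as the paper. It uses the same three-case reading of \eqref{eq:Kproduct} with $\bar M_t=e^{A_t}\prod_i a_i$, and the same sign argument, which combines the $L^p(\mathrm dt\otimes\mathrm d\QQ)$ convergence $\Delta_\varepsilon\lambda_k\to D\lambda_k$ (via \eqref{eq:exactremainder}, \eqref{eq:remainderconvergence}, \eqref{eq:processdifferentiability}), the pointwise bound $\Delta_\varepsilon\lambda_k=\lambda_k^\varepsilon/\varepsilon\ge0$ on $\{\bar\lambda_k=0\}$, and the compensator identity for predictable integrands to reach the jump times. The only difference is the order of steps: the paper first shows $D\lambda_k\ge0$ on $\{\bar\lambda_k=0\}$ $\mathrm dt\otimes\mathrm d\QQ$-a.e.\ and then transfers that null set to the jump times, whereas you transfer the convergence to the jump times and take an a.s.\ subsequence before passing to the limit in the sign, which is equally valid.
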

	
	\begin{proof}
		
		If $a_j>0$ for every $j\le J(t)$,
		then $\bar M_t=e^{A_t}P_t>0$ and
		\[
		\begin{aligned}
			\frac{K_t}{\bar M_t}
			&=-B_t+\frac{S_t}{P_t}\\
			&=-\sum_{k=1}^n\int_0^t
			D\lambda_k(s)\,\drm s
			+\sum_{j=1}^{J(t)}
			\frac{\dot a_j}{a_j}\\
			&=-\sum_{k=1}^n\int_0^t
			D\lambda_k(s)\,\drm s
			+\sum_{j=1}^{J(t)}
			\frac{D\lambda_{k_j}(\tau_j)}
			{\bar\lambda_{k_j}(\tau_j)}.
		\end{aligned}
		\]
		This proves \eqref{eq:scoreonsupport}.
		
		If at least two factors $a_j$ vanish,
		then $P_t=S_t=0$, and hence $K_t=0$.
		If exactly one factor $a_{j_0}$ vanishes,
		then $
		P_t=0$, and $
		S_t=\dot a_{j_0}
		\prod_{i\ne j_0}a_i,$ 
		which gives $
		K_t=e^{A_t}\dot a_{j_0}
		\prod_{i\ne j_0}a_i.$ 
		
		It remains to establish the
		nonnegativity of $\dot a_{j_0}$.
		For every $\varepsilon>0$, on the
		predictable set $\{\bar\lambda_k=0\}$, $
		\Delta_\varepsilon\lambda_k(t)
		=\frac{\lambda_k^\varepsilon(t)}
		{\varepsilon}\ge0.$ 
		By \eqref{eq:exactremainder},
		\eqref{eq:remainderconvergence} and
		\eqref{eq:processdifferentiability}, $
		\EE\int_0^T
		|\Delta_\varepsilon\lambda_k(t)
		-D\lambda_k(t)|^p\,\drm t
		\to 0$.
		
		Writing $x^-=\max\{-x,0\}$, we obtain
		\[ 
		\EE\int_0^T
		\ind_{\{\bar\lambda_k(t)=0\}}
		|(D\lambda_k(t))^-|^p\,\drm t\le
		\EE\int_0^T
		|\Delta_\varepsilon\lambda_k(t)
		-D\lambda_k(t)|^p\,\drm t
		\to 0.
		\]
		Therefore, $
		D\lambda_k(t)\ge0$ on set $\{\bar\lambda_k(t)=0\},$ $\drm t\otimes\drm \QQ$-a.s..
		
		Since $\bar\lambda_k$ and
		$D\lambda_k$ are predictable, then 
		\[  \EE\sum_{j=1}^{J(T)}
		\ind_{\{
			\bar\lambda_{k_j}(\tau_j)=0,\,
			D\lambda_{k_j}(\tau_j)<0
			\}}=c\sum_{k=1}^n
		\EE\int_0^T
		\ind_{\{
			\bar\lambda_k(t)=0,\,
			D\lambda_k(t)<0
			\}}\,\drm t
		=0. 
		\]
		Hence $\dot a_{j_0}\ge0$ almost surely
		whenever $a_{j_0}=0$. Since all the
		remaining factors are nonnegative,
		\eqref{eq:onezerofactor} follows.
	\end{proof}
	The directional cost formula is now established, including the contribution carried by the likelihood derivative. We next turn to its Hamiltonian system, which eliminates the state and likelihood variations from the optimality condition.
	
	\section{A stochastic maximum principle}\label{sec:smp}
	Throughout this section,  fix $\bar u\in\mathcal U_{\mathrm{ad}}$ and let $(\bar X,\bar M,\bar\mu)$ denote the associated solution.

We represent the directional cost derivative through adjoint equations and derive a necessary optimality condition by projection onto the observation-predictable $\sigma$-field.

 For $P\in\R^d$, $Q\in\R^{d\times d_W}$, $R\in L^2(E,\nu;\R^d)$, and $\ell\in\R^n$, define
	\begin{align*}
		H(t,x,a,\mu,u;P,Q,R,\ell)
		= &af(t,x,\mu,u)+P\cdot b(t,x,\mu,u)
		+\tr(Q^\top\sigma(t,x,\mu,u))\\
		&+\int_E R(\zeta)\cdot\eta(t,x,\mu,u,\zeta)\nu(\mathrm d\zeta)
		+a\sum_{k=1}^n\ell_k(\lambda_k(t,x,\mu,u)-c).
	\end{align*}
	Then, the adjoint equations are as follows 
\begin{equation}\label{eq:scalaradjoint}
	\left\{
	\begin{aligned}
		\drm p_t=&-\left(H_a(t)+\EE'[H_\mu(t,\omega';\bar X_{t-})]\right)\drm t
		+q_t\cdot \drm W_t+\int_E\varrho_t(\zeta)\ti N(\drm \zeta,\drm t)
		+\sum_k\ell_{k,t}\drm \Yt_k(t),\\
		p_T=&\Phi(\bar X_T,\bar\mu_T)
		+\EE'\left[
		\bar M_T'\Phi_\mu
		(\bar X_T',\bar\mu_T;\bar X_T)
		\right],
	\end{aligned}
	\right.
\end{equation}
and 
\begin{equation}\label{eq:vectoradjoint}
	\left\{
	\begin{aligned}
		\drm P_t=&-\left(H_x(t)+\bar M_{t-}\EE'[H_{\mu,1}(t,\omega';\bar X_{t-})]\right)\drm t
		+Q_t\,\drm W_t +\int_E R_t(\zeta)\Nt(\drm \zeta,\dd t)
		+\sum_k\Pi_{k,t}d\Yt_k(t),\\
		P_T=&\bar M_T\bigg(
		\Phi_x(\bar X_T,\bar\mu_T)
		+\EE'\left[
		\bar M_T'\Phi_{\mu,1}
		(\bar X_T',\bar\mu_T;\bar X_T)
		\right]\bigg).
	\end{aligned}
	\right.
\end{equation}
Here, the derivatives of $H$ are evaluated at $(t,\bar X_{t-},\bar M_{t-},\bar\mu_t,\bar u_t;P_{t-},Q_t,R_t,\ell_t).$

The terminal conditions of the adjoint equations are chosen
to match the terminal part of the first-order cost variation.
More precisely, we seek to express
$\EE[K_T\bar\Phi+\bM_TD\Phi_T]$ as
$\EE[p_TK_T+P_T\cdot\chi_T]$,
so that it can subsequently be treated by Itô's product formula.

Using the definition of $D\Phi_T$ and interchanging the
independent copies, we obtain
\begin{align*}
	\EE[K_T\bar\Phi+\bM_TD\Phi_T]
	=\EE\bigg[&
	K_T\Bigl(
	\bar\Phi+
	\EE'[\bM_T'\Phi_\mu(\bX_T',\bmu_T;\bX_T)]
	\Bigr)\\
	&+\bM_T\chi_T\cdot\Bigl(
	\Phi_x(\bX_T,\bmu_T)
	+\EE'[\bM_T'\Phi_{\mu,1}(\bX_T',\bmu_T;\bX_T)]
	\Bigr)
	\bigg].
\end{align*}
Thus the prescribed terminal values of $p$ and $P$ give
\begin{equation}\label{eq:terminalduality}
	\EE[p_TK_T+P_T\cdot\chi_T]
	=
	\EE[K_T\bar\Phi+\bM_TD\Phi_T].
\end{equation}
This identity provides the terminal matching needed for the
duality argument.  

Before carrying out this argument, we establish existence
and uniqueness of the adjoint processes in the required
square-integrable spaces on the natural filtration fixed
in Section~\ref{sec:model}.
	
	\begin{lemma}
	\label{lem:martingale}
	Let $\FF$ be the augmented natural filtration
	generated by $W$, $\bar N$ and $Y_1,\ldots,Y_n$.
	Then every square-integrable $\FF$-martingale
	$L$ admits a unique representation
	\begin{equation}\label{eq:martingaledecomp}
		L_t=L_0+\int_0^t Z_s\,\drm W_s
		+\int_0^t\int_E U_s(\zeta)\Nt(\drm\zeta,\drm s)
		+\sum_{k=1}^n\int_0^t l_{k,s}\,\drm\Yt_k(s),
	\end{equation}
	where the predictable integrands satisfy
	\[
	\EE\int_0^T\left(
	|Z_s|^2+\|U_s\|_{L^2(\nu)}^2
	+c\sum_{k=1}^n|l_{k,s}|^2
	\right)\drm s<\infty.
	\]
\end{lemma}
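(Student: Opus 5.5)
The plan is to reduce to the classical predictable representation theorem for the joint Brownian–Poisson random measure noise, using independence of the three noise sources under $\QQ$.

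\emph{Step 1: the noise is a single Poisson-type random measure plus a Brownian motion.} Under $\QQ$, the counting processes $Y_1,\ldots,Y_n$ have no common jumps and are independent of $\bN$. So I would encode $\bN$ and $Y$ together as one Poisson random measure $\widehat N$ on the enlarged mark space $\widehat E=E\sqcup\{1,\ldots,n\}$. Its compensator is $\widehat\nu(\drm\zeta)\drm t$, where $\widehat\nu=\nu+c\sum_{k}\delta_k$. The mark space $\widehat E$ is again standard Borel, $\widehat N$ is independent of $W$, and $\FF$ is exactly the augmented filtration generated by $(W,\widehat N)$.

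\emph{Step 2: existence.} I would invoke the martingale representation theorem for Lévy-type filtrations generated by a Brownian motion and an independent Poisson random measure (e.g.\ Jacod--Shiryaev, Thm.~III.4.34, or Ikeda--Watanabe). This gives, for every square-integrable $\FF$-martingale $L$,
\[
L_t=L_0+\int_0^tZ_s\,\drm W_s+\int_0^t\int_{\widehat E}\widehat U_s(\zeta)\,(\widehat N-\widehat\nu\,\drm s)(\drm\zeta,\drm s),
\]
with $\EE\int_0^T\bigl(|Z_s|^2+\|\widehat U_s\|^2_{L^2(\widehat\nu)}\bigr)\drm s=\EE|L_T-L_0|^2<\infty$. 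Restricting $\widehat U$ to $E$ gives $U$, and setting $l_{k,s}=\widehat U_s(k)$ gives the $Y$-integrands. The $\widehat\nu$-norm then splits exactly into $\|U_s\|^2_{L^2(\nu)}+c\sum_k|l_{k,s}|^2$, which yields \eqref{eq:martingaledecomp} together with the integrability bound. If I wanted to avoid citing the theorem directly, I would instead prove density of the stochastic exponentials $\mathcal E\bigl(\int g\,\drm W+\int\!\!\int(e^{\phi}-1)\,\drm\tilde{\widehat N}\bigr)$ for deterministic step $g$ and bounded $\phi$ with finite-measure support. These span a dense subset of $L^2(\cF_T)$ by a Fourier/Laplace-transform argument, and each such exponential has an explicit representation. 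The general case would then follow by $L^2$-closure, using the Itô isometry below.

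\emph{Step 3: uniqueness.} If two representations exist, subtract them to get a zero martingale. The three stochastic integrals are mutually strongly orthogonal. The Brownian part is continuous while the others are purely discontinuous. The $\Nt$-part and the $\Yt_k$-parts have no common jumps, and the $\Yt_k$ have no common jumps with each other, $\QQ$-a.s. Hence the Itô isometry gives
\[
0=\EE\int_0^T\Bigl(|Z_s|^2+\|U_s\|^2_{L^2(\nu)}+c\sum_k|l_{k,s}|^2\Bigr)\drm s
\]
for the differences. The integrands therefore agree $\drm t\otimes\drm\QQ$-a.e., with $U$ agreeing $\drm t\otimes\drm\QQ\otimes\nu$-a.e.

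\emph{Main obstacle.} The only delicate point is the existence step, specifically the density, which requires that $\FF$ be the augmented \emph{natural} filtration of the noises and not a larger one. This is guaranteed by the construction in Section~\ref{sec:model}, so I expect the citation route to suffice.
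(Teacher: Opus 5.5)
Your proposal is correct and follows essentially the same route as the paper: both merge $\bar N$ and $Y_1,\ldots,Y_n$ into a single Poisson random measure on $E\sqcup\{1,\ldots,n\}$ with intensity $\nu+c\sum_k\delta_k$, independent of $W$, and cite the predictable representation property for the augmented natural filtration of this pair (the paper cites Applebaum, Section~5.3). Both then restrict the integrand to obtain $U$ and the $l_k$, and derive the norm identity and uniqueness from the orthogonality of the stochastic integrals and the It\^o isometry; your optional stochastic-exponential density argument is an extra that the paper does not need, since it relies on the citation.
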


\begin{proof}
	Let
	\[
	E^*=E\sqcup\{1,\ldots,n\},
	\qquad
	\nu^*=\nu+c\sum_{k=1}^n\delta_k.
	\]
	Define the random measure $N^*$ on $E^*$
	by
	\[
	N^*(A\times(0,t])
	=\bar N((A\cap E)\times(0,t])
	+\sum_{k=1}^n\ind_{\{k\in A\}}Y_k(t).
	\]
	By independence, $N^*$ is a Poisson random
	measure with compensator
	$\nu^*(d\zeta)\drm t$, independent of $W$.
	
	The predictable representation property
	for the augmented natural filtration of
	$(W,N^*)$ (see \citet[Section~5.3]{applebaum2009levy}) yields
	\[
	L_t=L_0+\int_0^t Z_s\,\drm W_s
	+\int_0^t\int_{E^*}
	U_s^*(\zeta)\widetilde N^*(\drm\zeta,\drm s).
	\]
	Taking $
	U_s(\zeta)=U_s^*(\zeta)$ with $\zeta\in E$, and $ l_{k,s}=U_s^*(k),$ then 
	\eqref{eq:martingaledecomp} holds.
	
	Since the stochastic integrals are
	pairwise orthogonal, by using  It\^o's isometry, we obtain\[
	\EE|L_T-L_0|^2
	=\EE\int_0^T\left(
	|Z_s|^2+\|U_s\|_{L^2(\nu)}^2
	+c\sum_{k=1}^n|l_{k,s}|^2
	\right)\drm s.
	\]
	
To prove uniqueness, suppose that $(Z,U,l)$ and $(\widehat Z,\widehat U,\widehat l)$
are two predictable, square-integrable families of integrands
representing the same martingale $L$.
Set $
\Delta Z=Z-\widehat Z$, $
\Delta U=U-\widehat U,$ and $
\Delta l_k=l_k-\widehat l_k.$ 
Then, based on \eqref{eq:martingaledecomp}, we have 
\[
0=
\int_0^T\Delta Z_s\,\drm W_s
+\int_0^T\int_E
\Delta U_s(\zeta)\Nt(\drm\zeta,\drm s)
+\sum_{k=1}^n
\int_0^T\Delta l_{k,s}\,\drm\Yt_k(s).
\]
All the integrands in this identity remain square integrable.
By the pairwise orthogonality of the stochastic integrals
and It\^o's isometry,
\[
\begin{aligned}
	0
	&=\EE\left|
	\int_0^T\Delta Z_s\,\drm W_s
	+\int_0^T\int_E
	\Delta U_s(\zeta)\Nt(\drm\zeta,\drm s)
	+\sum_{k=1}^n
	\int_0^T\Delta l_{k,s}\,\drm\Yt_k(s)
	\right|^2\\
	&=\EE\int_0^T|\Delta Z_s|^2\,\drm s
	+\EE\int_0^T\int_E
	|\Delta U_s(\zeta)|^2\,\nu(\drm\zeta)\,\drm s
	+c\sum_{k=1}^n
	\EE\int_0^T|\Delta l_{k,s}|^2\,\drm s.
\end{aligned}
\]
Since every term on the right-hand side is nonnegative,
\[
\EE\int_0^T|\Delta Z_s|^2\,\drm s=0,
\qquad
\EE\int_0^T\int_E
|\Delta U_s(\zeta)|^2\,\nu(\drm\zeta)\,\drm s=0.
\]
Moreover, since  $c>0$, then for $k=1,\cdots, n$, $
\EE\int_0^T|\Delta l_{k,s}|^2\,\drm s=0.$ 
It follows that $
Z=\widehat Z$, and $ l_k=\widehat l_k,$  $\drm t\otimes \drm \QQ$-a.e. 
and $
U=\widehat U$ $\drm t\otimes\drm \QQ\otimes \nu$-a.e. 
Therefore, the representation is unique in the corresponding
stochastic integration spaces.
\end{proof}

	\begin{lemma}\label{lem:BSDE}
			Suppose $\xi\in L^2(\mathcal F_{s_1})$ and predictable
		$g\in L^2(I\times\Omega,dt\otimes d\QQ)$. The equation
		\begin{align*}
			Y_t=\xi+\int_t^{s_1}g_s\ds
			-\int_t^{s_1}Z_s\,dW_s-\int_t^{s_1}\int_E U_s(\zeta)\Nt(d\zeta,ds)
			-\sum_k\int_t^{s_1}L_{k,s}d\Yt_k(s)
		\end{align*}
		has a unique solution in $\mathcal B^2(I)$, with the displayed integrands. If
		$\delta=s_1-s_0\le1$, then
		\begin{equation}\label{eq:BSDEestimate}
			\norm{(Y,Z,U,L)}_{\mathcal B^2(I)}^2
			\le C\left(\EE|\xi|^2+\delta\EE\int_I|g_t|^2\dd t\right).
		\end{equation}
		For two such equations with the same terminal condition,
		\begin{equation}\label{eq:BSDEdifference}
			\norm{\Delta(Y,Z,U,L)}_{\mathcal B^2(I)}
			\le C\delta^{1/2}\norm{g^1-g^2}_{L^2(I\times\Omega)}.
		\end{equation} 
	\end{lemma}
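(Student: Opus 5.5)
The plan is to construct the solution explicitly through the martingale representation of Lemma~\ref{lem:martingale}, and then derive the two estimates from Itô's formula applied to $|Y_t|^2$ together with Doob's inequality. Since the generator $g$ does not depend on the unknowns, no fixed-point argument is needed.

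\textbf{Existence and uniqueness.} Set
\[
L_t=\EE\Bigl[\xi+\int_{s_0}^{s_1}g_s\ds\,\Big|\,\cF_t\Bigr],\qquad t\in I .
\]
Because $\xi\in L^2$ and $g\in L^2(I\times\Omega)$, Cauchy--Schwarz shows that $L$ is a square-integrable martingale. Apply Lemma~\ref{lem:martingale} on $I$; this is legitimate because representing $L_t-L_{s_0}$ only requires the integrands restricted to $I$. The lemma gives $(Z,U,L_k)$ in $\HW^2(I)\times\JN^2(I)\times\JY^2(I)$. Then define
\[
Y_t=L_t-\int_{s_0}^t g_s\ds .
\]
Subtracting the representation at $t$ from the one at $s_1$ shows that $Y$ satisfies the backward equation, and $Y$ is càdlàg and adapted.

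For uniqueness, take the difference of two solutions. It solves the equation with $\xi=0$ and $g=0$, so it is a martingale with terminal value $0$. Hence it vanishes identically, and the uniqueness part of Lemma~\ref{lem:martingale} forces the integrands to agree.

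\textbf{Estimate \eqref{eq:BSDEestimate}.} First, by Jensen's inequality for conditional expectations,
\[
|Y_t|\le \EE\bigl[|\xi|+\textstyle\int_I|g_s|\ds \,\big|\,\cF_t\bigr].
\]
Doob's $L^2$ inequality and Cauchy--Schwarz in time then give
\[
\EE\sup_{t\in I}|Y_t|^2\le C\Bigl(\EE|\xi|^2+\delta\,\EE\int_I|g|^2\dd t\Bigr).
\]
Next, apply the isometry to the martingale increment:
\[
\EE|L_{s_1}-L_{s_0}|^2=\|Z\|_{\HW^2}^2+\|U\|_{\JN^2}^2+\|L\|_{\JY^2}^2 .
\]
The left side is bounded by $2\,\EE|L_{s_1}|^2\le C\bigl(\EE|\xi|^2+\delta\,\EE\int_I|g|^2\bigr)$. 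Combining the two bounds gives \eqref{eq:BSDEestimate}. The restriction $\delta\le1$ is used only to absorb $\delta^2$ into $\delta$ where it appears.

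\textbf{Difference estimate \eqref{eq:BSDEdifference}.} By linearity, the difference of the two solutions solves the equation with $\xi=0$ and generator $g^1-g^2$. Applying \eqref{eq:BSDEestimate} with $\xi=0$ and taking square roots gives exactly $C\delta^{1/2}\|g^1-g^2\|_{L^2(I\times\Omega)}$.

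The main point requiring care is the sharp factor $\delta$ in front of the $g$-term. It comes from $\bigl(\int_I|g|\bigr)^2\le\delta\int_I|g|^2$ applied inside the Doob bound and inside the martingale variance. One must avoid an Itô–Gronwall route, which would produce an $O(1)$ constant instead of $\delta$. Everything else is routine.
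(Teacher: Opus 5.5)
Your proposal is correct and follows essentially the same route as the paper: take the conditional expectation of $\xi+\int_I g_s\,\mathrm ds$, represent it via Lemma~\ref{lem:martingale}, bound the integrands by the isometry and $\sup|Y|$ by Doob's inequality, and obtain \eqref{eq:BSDEdifference} by linearity with $\xi=0$. The differences are cosmetic: you dominate $|Y_t|$ by a single conditional-expectation martingale before applying Doob, whereas the paper splits $Y=\mathcal R-\int g$ and bounds the two pieces separately. Also, the Itô formula for $|Y_t|^2$ mentioned in your opening sentence is never actually used, and the restriction $\delta\le 1$ is not needed in either argument.
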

	\begin{proof}
		Let $A=\int_{s_0}^{s_1}g_s\,\drm s$, and $ 
	\zeta=\xi+A.$  By the Cauchy--Schwarz inequality, $
	\EE|A|^2
	\le\delta\EE\int_I|g_s|^2\,\drm s,$ 
	and hence
	\begin{equation}\label{eq:BSDEterminalbound}
		\EE|\zeta|^2
		\le2\EE|\xi|^2
		+2\delta\EE\int_I|g_s|^2\,\drm s.
	\end{equation}
	
	For $t\in I$, define $
	\mathcal R_t
	=\EE[\zeta\mid\mathcal F_t].$ 
	By Lemma~\ref{lem:martingale}, there exist
	unique predictable processes $(Z,U,L)$
	such that
	\[ 
	\mathcal R_t
	=\mathcal R_{s_0}
	+\int_{s_0}^t Z_s\,\drm W_s+\int_{s_0}^t\int_E
	U_s(\zeta)\Nt(\drm\zeta,\drm s)+\sum_{k=1}^n
	\int_{s_0}^t L_{k,s}\,\drm\Yt_k(s).
	\]
	Set $
	Y_t=\mathcal R_t-\int_{s_0}^t
	g_s\,\drm s.$ 
	Since $\mathcal R_{s_1}=\zeta$, we have
	$Y_{s_1}=\xi$. Moreover,
	\[ 
	Y_t
	=\xi+\int_t^{s_1}g_s\,\drm s
	-\int_t^{s_1}Z_s\,\drm W_s-\int_t^{s_1}\int_E
	U_s(\zeta)\Nt(\drm\zeta,\drm s)-\sum_{k=1}^n
	\int_t^{s_1}L_{k,s}\,\drm\Yt_k(s).
	\]
	Thus $(Y,Z,U,L)$ solves the
	backward equation.
	
	By the orthogonality of the stochastic
	integrals and It\^o's isometry,
	\[\|Z\|_{\mathcal H_W^2(I)}^2
		+\|U\|_{\mathcal J_N^2(I)}^2
		+\|L\|_{\mathcal J_Y^2(I)}^2=\EE|\mathcal R_{s_1}
		-\mathcal R_{s_0}|^2\le\EE|\zeta|^2.
	\]
	Furthermore, by using Doob's inequality,
	\[
		\EE\sup_{t\in I}|Y_t|^2\le2\EE\sup_{t\in I}
		|\mathcal R_t|^2
		+2\EE\sup_{t\in I}
		\left|\int_{s_0}^t g_s\,\drm s\right|^2\le8\EE|\zeta|^2
		+2\delta\EE\int_I|g_s|^2\,\drm s.
	\]
	Combining these inequalities with
	\eqref{eq:BSDEterminalbound}, we obtain
	\[
	\begin{aligned}
		\|(Y,Z,U,L)\|_{\mathcal B^2(I)}^2
		&\le9\EE|\zeta|^2
		+2\delta\EE\int_I|g_s|^2\,\drm s\\
		&\le C\left(
		\EE|\xi|^2
		+\delta\EE\int_I|g_s|^2\,\drm s
		\right),
	\end{aligned}
	\]
	which proves \eqref{eq:BSDEestimate}.
		
	To prove uniqueness, let
	$(Y^1,Z^1,U^1,L^1)$ and $(Y^2,Z^2,U^2,L^2)$
	be two solutions in $\mathcal B^2(I)$ with the same
	terminal condition $\xi$ and the same prescribed driver $g$.
	
	Since their stochastic integrals are square-integrable
	martingales, taking conditional expectations in each
	backward equation gives, for every fixed $t\in I$, 
	\[
	Y_t^i
	=
	\EE\left[
	\xi+\int_t^{s_1}g_s\,\drm s
	\,\middle|\,\mathcal F_t
	\right],
	\qquad i=1,2.
	\]
	Therefore, $Y_t^1=Y_t^2$ almost surely for each fixed $t$.
	Choose a countable dense subset of $I$ containing its
	endpoints. Outside a single null set, the two processes
	agree at every point of this subset.
	Since both processes have c\`adl\`ag paths, they agree
	at every $t\in I$ outside that same null set.
	Thus $Y^1$ and $Y^2$ are indistinguishable.
	
	Set $
	\Delta Z=Z^1-Z^2$, $
	\Delta U=U^1-U^2,$ and $
	\Delta L_k=L_k^1-L_k^2.$ 
	Subtracting the two backward equations at $t=s_0$,
	and using $Y_{s_0}^1=Y_{s_0}^2$, yields
	\[
	0=
	\int_{s_0}^{s_1}\Delta Z_s\,\drm W_s
	+\int_{s_0}^{s_1}\int_E
	\Delta U_s(\zeta)\Nt(\drm\zeta,\drm s)
	+\sum_{k=1}^n
	\int_{s_0}^{s_1}\Delta L_{k,s}\,\drm\Yt_k(s).
	\]
	By the orthogonality of these stochastic integrals
	and It\^o's isometry,
	\[ 
		0=\EE\int_I|\Delta Z_s|^2\,\drm s+\EE\int_I\int_E
		|\Delta U_s(\zeta)|^2\,\nu(\drm\zeta)\,\drm s+c\sum_{k=1}^n
		\EE\int_I|\Delta L_{k,s}|^2\,\drm s.
	\]
	Each term is nonnegative, so each term is zero.
	Since $c>0$, it follows that 
	$Z^1=Z^2$ and $ L_k^1=L_k^2,$ $\drm\otimes\drm \QQ$-a.e., 
	for every $k=1,\ldots,n$, and $
	U^1=U^2$, $\drm t\otimes\drm\QQ\otimes\nu)$-a.e.
	Therefore the solution is unique in $\mathcal B^2(I)$.
	\end{proof}
	The following proposition establishes well-posedness of the adjoint system. 
\begin{proposition}\label{prop:adjoint}
	Under Assumptions~\ref{ass:state}--\ref{ass:regularity}, equations
	\eqref{eq:scalaradjoint}--\eqref{eq:vectoradjoint}  have a unique solution satisfying
	\begin{equation}\label{eq:adjointspaces}
		(p,q,\varrho,\ell)\in\mathcal B^2([0,T]), \qquad (P,Q,R,\Pi)\in\mathcal B^2([0,T]).
	\end{equation}
	In particular,
	\begin{equation}\label{eq:Mell}
		\EE\int_0^T\bar M_{t-}^2|\ell_t|^2\dd t	\le\frac{\CM^2}{c}\norm{\ell}_{\mathcal J_Y^2}^2<\infty, 
	\end{equation}
	and $H_u\in L^2(\dd t\otimes \drm \QQ)$.
\end{proposition}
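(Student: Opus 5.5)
The plan is to exploit the structure of the system \eqref{eq:scalaradjoint}--\eqref{eq:vectoradjoint}. The vector equation for $(P,Q,R,\Pi)$ does not involve $(p,q,\varrho,\ell)$. Indeed, $H_x$ contains $\ell$ only through $a\sum_k\ell_k\lambda_{k,x}$, and the coefficient $H_x$ does involve $\ell$ via $\bar M_{t-}\ell_{k,t}\bar\lambda_{k,x}$. So the two equations are in fact coupled in the direction $\ell\to P$. The scalar equation, by contrast, has driver $H_a+\EE'[H_\mu]$. Here $H_a=\bar f+\sum_k\ell_k(\bar\lambda_k-c)$ involves only $\ell$, while $\EE'[H_\mu(t,\omega';\bar X_{t-})]$ involves the copied $P',Q',R',\ell'$. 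Hence I would treat the full system as one linear BSDE (with mean-field terms) for the pair $\mathcal Y=(p,P)$ with integrands $(q,Q),(\varrho,R),(\ell,\Pi)$.

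I would solve it by a contraction argument on short intervals, using Lemma~\ref{lem:BSDE}. Fix a partition $0=s_0<\cdots<s_N=T$ with mesh $\delta\le1$ to be chosen. On the last interval, given an input $(p,q,\varrho,\ell,P,Q,R,\Pi)\in\mathcal B^2(I)^2$, I form the drivers by plugging the input into $-(H_a+\EE'[H_\mu])$ and $-(H_x+\bar M_{t-}\EE'[H_{\mu,1}])$. Lemma~\ref{lem:BSDE} then defines the output.

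The key estimate is a Lipschitz bound of the drivers in $L^2(I\times\Omega)$ by the $\mathcal B^2(I)$ norm of the input. For the terms linear in $(P,Q,R,\ell)$, the bounds \eqref{eq:boundedderivatives}, \eqref{eq:etaderivatives} give bounded coefficients. These are $\bar b_x,\bar\sigma_x$, the $L^2(\nu)$ norm of $\bar\eta_x$ (paired with $R$ via Cauchy--Schwarz in $\nu$), $\bar M\bar\lambda_{k,x}$, and $0\le c-\bar\lambda_k\le c$. For the mean-field terms I would use $|\varphi_{\mu}(\cdot;y)|\le C(1+|y|)$. For example, $\EE'[P'_{t-}\cdot b_\mu(t,\bar X'_{t-},\bar\mu_t,\bar u'_t;\bar X_{t-})]$ is bounded by $C(1+|\bar X_{t-}|)\|P_{t-}\|_{L^2}$. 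This is in $L^2(dt\otimes d\QQ)$ only in the form $C\,\|1+|\bar X|\|_{\mathcal S^2}\,\|P\|_{\mathcal S^2}$, so the $\mathcal S^2$ part of the $\mathcal B^2$ norm is what controls it. The same applies to $Q',R',\ell'$, which are only in $L^2(dt)$-integrated form; there I would bound $\int_I\EE|Q_t|^2\,dt$ pointwise-in-$t$ after Cauchy--Schwarz in $\omega'$, giving $\EE\int_I(1+|\bar X_t|^2)\,dt\cdot\sup_t$. That does not work, since $Q$ is not bounded in $t$. Instead I would write $|\EE'[Q'_t\,\cdot\,\sigma_\mu(\cdot;\bar X_{t-})]|\le C(1+|\bar X_{t-}|)\|Q_t\|_{L^2}$. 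Integrating, I get $\EE\int_I(1+|\bar X_{t-}|)^2\|Q_t\|^2_{L^2}dt\le C\,\EE\sup_t(1+|\bar X_t|)^2\int_I\|Q_t\|^2dt$, which is fine because the deterministic factor $\|Q_t\|_{L^2}^2$ separates. The $\mu,1$ terms have bounded derivatives times $\bar M_{t-}\le C_M$ and are easier.

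The inhomogeneous parts are $\bar f$, $\bar f_x$, $\EE'[\bar M'f_\mu]$, and the terminal data. They lie in $L^2$ by \eqref{eq:costderivatives} and the moment bounds \eqref{eq:forwardmoment}, since all moments of $\bar X$ and $\bar u$ are finite. In the same way $P_T,p_T\in L^2$, using $\bar M_T\le C_M$.

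Applying \eqref{eq:BSDEdifference} then gives contraction constant $C\delta^{1/2}<1$ for $\delta$ small, with $C$ independent of the interval. This yields a unique solution on $[s_{N-1},s_N]$. I would then proceed backward with terminal value $Y_{s_{N-1}}\in L^2$, concatenating the pieces to obtain a unique solution in $\mathcal B^2([0,T])$. Uniqueness on $[0,T]$ follows interval by interval from the same contraction.

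Finally, \eqref{eq:Mell} is immediate from $\bar M\le C_M$ and the definition of $\|\ell\|_{\mathcal J_Y^2}$. For $H_u\in L^2$, I would bound each term separately:
\begin{itemize}
\item $\bar M|\bar f_u|\le CA_2$;
\item $|P\cdot\bar b_u|+|\tr(Q^\top\bar\sigma_u)|+\|R\|_{L^2(\nu)}\|\bar\eta_u\|_{L^2(\nu)}$ are square-integrable by boundedness of the derivatives;
\item $\bar M|\ell||\bar\lambda_{k,u}|$ is controlled by \eqref{eq:Mell}.
\end{itemize}

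The main obstacle is the mean-field driver estimate, specifically handling the copied integrands $Q',R',\ell'$, which are only $dt$-square integrable, against the unbounded weight $1+|\bar X_{t-}|$. The factorization above is what I would rely on.
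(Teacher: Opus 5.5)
Your proposal is correct and follows essentially the same route as the paper: a short-interval contraction via Lemma~\ref{lem:BSDE} and \eqref{eq:BSDEdifference}, with driver estimates in which Cauchy--Schwarz in $\omega'$ produces a deterministic factor (the paper's $A_\Theta(t)$, which is exactly the separation you end up using for $Q',R',\ell'$), square-integrable inhomogeneous and terminal terms from \eqref{eq:costderivatives} and the moment bounds, and backward concatenation over a partition. The only cosmetic difference is that the paper runs the fixed point only on $(P,Q,R,\ell)$, the components that actually enter the drivers, and then reads off $(p,q,\varrho)$ and $\Pi$, whereas you contract on the full system; the two are equivalent.
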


\begin{proof}
	For $\Theta=(\widetilde P,\widetilde Q,	\widetilde R,\widetilde\ell)$, define
	\begin{equation}\label{eq:Gmu}
		\begin{aligned}
			G_\mu[\Theta](t;y)
			={}&\EE'\bigg[\widetilde P_{t-}'\cdot\bar b_\mu'(t;y)
			+\tr\bigl((\widetilde Q_t')^\top\bar\sigma_\mu'(t;y)\bigr)\bigg]\\
			&+\EE'\bigg[\int_E\widetilde R_t'(\zeta)\cdot
			\bar\eta_\mu'(t,\zeta;y)\nu(\drm\zeta)
			+\bar M_{t-}'\sum_{k=1}^n\widetilde\ell_{k,t}'
			\bar\lambda_{k,\mu}'(t;y)\bigg].
		\end{aligned}
	\end{equation}
	Let $G_{\mu,1}[\Theta]=\partial_yG_\mu[\Theta]$. Then,
	\[
	\begin{aligned}
		G_{\mu,1}[\Theta](t;y)
		={}&\EE'\bigg[(\bar b_{\mu,1}'(t;y))^*\widetilde P_{t-}'
		+(\bar\sigma_{\mu,1}'(t;y))^*\widetilde Q_t'\bigg]\\
		&+\EE'\bigg[\int_E(\bar\eta_{\mu,1}'(t,\zeta;y))^*
		\widetilde R_t'(\zeta)\nu(\drm\zeta)
		+\bar M_{t-}'\sum_{k=1}^n\widetilde\ell_{k,t}'
		\bar\lambda_{k,\mu,1}'(t;y)\bigg].
	\end{aligned}
	\]
	Here, the symbol \(^*\) denotes adjoints.
	
	Let  $A_\Theta(t)^2=\EE\bigg[|\widetilde P_{t-}|^2+|\widetilde Q_t|^2+\|\widetilde R_t\|_{L^2(\nu)}^2+c|\widetilde\ell_t|^2\bigg]$.  Based on  Assumption~\ref{ass:regularity}, by using the Cauchy--Schwarz inequality,  $$
	\left|\EE'\left[\widetilde P_{t-}'\cdot\bar b_\mu'(t;y)\right]\right|\le C(1+|y|)\left(\EE|\widetilde P_{t-}|^2\right)^{1/2},$$
	and
	\[  \left|\EE'\left[\int_E\widetilde R_t'(\zeta)\cdot\bar\eta_\mu'(t,\zeta;y)\nu(d\zeta)\right]\right|\le C(1+|y|)\left(\EE\|\widetilde R_t\|_{L^2(\nu)}^2\right)^{1/2}.
	\]
	Similarly,
	\[ \left|\EE'\left[\bar M_{t-}'	\widetilde\ell_{k,t}'\bar\lambda_{k,\mu}'(t;y)	\right]\right|\le
	C\CM(1+|y|)	\left(\EE|\widetilde\ell_{k,t}|^2	\right)^{1/2}.
	\]
	The corresponding estimates for $\sigma_\mu$ follow from its bounded
	derivatives. Moreover, since $b_{\mu,1}$, $\sigma_{\mu,1}$,
	$\eta_{\mu,1}$ and $\lambda_{k,\mu,1}$ are uniformly bounded, therefore
	\begin{equation}\label{eq:Gpointwise} 
		|G_\mu[\Theta](t;y)|\le C(1+|y|)A_\Theta(t),\qquad |G_{\mu,1}[\Theta](t;y)|\le CA_\Theta(t). 
	\end{equation}

	Furthermore, by using  Theorem~\ref{thm:forward}, $\sup_{t\le T}\EE|\bar X_t|^2<\infty.$ 
	Thus, combining with \eqref{eq:Gpointwise}, 
	\begin{equation}\label{eq:Goperator} 
		\|G_\mu[\Theta](t;\bar X_{t-})\|_{L^2} +\|\bar M_{t-}G_{\mu,1}[\Theta](t;\bar X_{t-})\|_{L^2}\le C A_\Theta(t).
	\end{equation}
	By linearity, for any $\Theta^1,\Theta^2$,
	\[\|G_\mu[\Theta^1](t;\bar X_{t-})-G_\mu[\Theta^2](t;\bar X_{t-})\|_{L^2}+
	\|\bar M_{t-}(G_{\mu,1}[\Theta^1]-G_{\mu,1}[\Theta^2])(t;\bar X_{t-})\|_{L^2}\le C A_{\Theta^1-\Theta^2}(t).
	\]
	
	We next verify the square integrability of the terminal conditions and the nonhomogeneous terms of the adjoint equations.
	
	Define $a_2(t)=1+\EE|\bar X_{t-}|^2+m_2(\bar\mu_t)+\EE|\bar u_t|^2.$  By \eqref{eq:forwardmoment} and the admissibility of $\bar u$,
	\[  \int_0^T a_2(t)^2\,\drm t\le C\bigg(1+\EE\sup_{t\le T}|\bar X_t|^4+\EE\int_0^T|\bar u_t|^4\,\drm t\bigg)<\infty.
	\]
	
	Define  $g_\mu^0(t;y)=\EE'[\bar M_{t-}'\bar f_\mu'(t;y)]$, and $g_{\mu,1}^0(t;y)=\EE'[\bar M_{t-}'\bar f_{\mu,1}'(t;y)].$  By \eqref{eq:costderivatives} and $0\le\bar M\le\CM$, $
	|g_\mu^0(t;y)|+|g_{\mu,1}^0(t;y)|\le Ca_2(t)(1+|y|^2).$  Therefore, 
	\[
	\begin{aligned}
		&\EE\int_0^T|g_\mu^0(t;\bar X_{t-})|^2\,\drm t +\EE\int_0^T|\bar M_{t-}g_{\mu,1}^0(t;\bar X_{t-})|^2\,\drm t\\
		&\le C\int_0^T a_2(t)^2 \EE(1+|\bar X_{t-}|^2)^2\,\drm t\\
		&\le C\left( 1+\sup_{t\le T}\EE|\bar X_t|^4 \right)\int_0^T a_2(t)^2\,\drm t
		<\infty.
	\end{aligned}
	\]
	Furthermore, the quadratic growth conditions imply $\EE\int_0^T\left(|\bar f_t|^2+|\bar M_{t-}\bar f_x(t)|^2	\right)\drm t<\infty.$ 
	
	For the terminal conditions, using \eqref{eq:costderivatives} and the definition of $\bar\mu_T$, we have
	\[
	\begin{aligned}
		&	\left|\EE'[\bar M_T'\Phi_\mu(\bar X_T',\bar\mu_T;	\bar X_T)]\right|\\
		&\quad\le C(1+|\bar X_T|^2)\EE'\bigl[\bar M_T'(1+|\bar X_T'|^2+m_2(\bar\mu_T))
		\bigr]\\
		&\quad\le C(1+|\bar X_T|^2).
	\end{aligned}
	\]
	The same estimate holds with $\Phi_\mu$ replaced by $\Phi_{\mu,1}$.
	Hence, by the terminal conditions in \eqref{eq:scalaradjoint}--\eqref{eq:vectoradjoint},
	$|p_T|+|P_T|\le C(1+|\bar X_T|^2),$  and therefore $\EE\left(|p_T|^2+|P_T|^2\right)
	\le C\left(1+\EE|\bar X_T|^4\right)<\infty.$
	
	Let $I=[s_0,s_1]\subset[0,T]$ and set $\mathcal V(I)=\mathcal S^2(I)\times\mathcal H_W^2(I)\times\mathcal J_N^2(I)\times\mathcal J_Y^2(I).$
	For $\Theta=(\widetilde P,\widetilde Q,\widetilde R,\widetilde\ell)\in\mathcal V(I)$,
	define the scalar driver
	\[F_s[\Theta](t)=\bar f_t+\sum_{k=1}^n(\bar\lambda_k(t)-c)\widetilde\ell_{k,t}+g_\mu^0(t;\bar X_{t-})
	+G_\mu[\Theta](t;\bar X_{t-}),
	\]
	and the vector driver
	\begin{align}
		F_v[\Theta](t)=&\bar M_{t-}\bar f_x(t)+\bar b_x(t)^*\widetilde P_{t-}+\bar\sigma_x(t)^*\widetilde Q_t+\int_E\bar\eta_x(t,\zeta)^*\widetilde R_t(\zeta)\nu(d\zeta)\\
		&+\bar M_{t-}\sum_{k=1}^n\bar\lambda_{k,x}(t)\widetilde\ell_{k,t}+\bar M_{t-}\bigl(g_{\mu,1}^0+G_{\mu,1}[\Theta]\bigr)(t;\bar X_{t-}).
	\end{align}
	By the preceding estimates, the boundedness of the local coefficient derivatives, and $0\le\bar M\le\CM$, $F_s[\Theta],F_v[\Theta]\in L^2(I\times\Omega).$
	
	For $\Theta^1,\Theta^2\in\mathcal V(I)$, define  $\Delta\Theta=\Theta^1-\Theta^2$.
	By \eqref{eq:Goperator}, $$\EE\left[|F_s[\Theta^1](t)-F_s[\Theta^2](t)|^2\right]+
	\EE\left[|F_v[\Theta^1](t)-F_v[\Theta^2](t)|^2\right]\le C A_{\Delta\Theta}(t)^2.$$
	Since
	\[ \int_I A_{\Delta\Theta}(t)^2\,\drm t=\EE\int_I\bigg(|\Delta\widetilde P_{t-}|^2
	+|\Delta\widetilde Q_t|^2+\|\Delta\widetilde R_t\|_{L^2(\nu)}^2+c|\Delta\widetilde\ell_t|^2
	\bigg)\drm t\le C\|\Delta\Theta\|_{\mathcal V(I)}^2,
	\]
	we obtain
	\begin{equation}\label{eq:adjointdriverlip}
			\|F_s[\Theta^1]-F_s[\Theta^2]\|_{L^2(I\times\Omega)}
			+\|F_v[\Theta^1]-F_v[\Theta^2]\|_{L^2(I\times\Omega)}\le C\|\Theta^1-\Theta^2\|_{\mathcal V(I)}.
	\end{equation}
	
	Fix square-integrable terminal values $\xi_s,\xi_v$ at $s_1$. For each $\Theta\in\mathcal V(I)$, apply Lemma~\ref{lem:BSDE} to the scalar and vector equations with
	drivers $F_s[\Theta]$ and $F_v[\Theta]$, respectively. Denote the resulting solutions by $
	(p^\Theta,q^\Theta,\varrho^\Theta,\ell^\Theta)$, and $ (P^\Theta,Q^\Theta,R^\Theta,\Pi^\Theta)$. 
	
	Define $\Gamma_I:\mathcal V(I)\to\mathcal V(I)$, and $\Gamma_I(\Theta)=(P^\Theta,Q^\Theta,R^\Theta,\ell^\Theta).$  By \eqref{eq:BSDEdifference} and \eqref{eq:adjointdriverlip}, $
	\|\Gamma_I(\Theta^1)-\Gamma_I(\Theta^2)\|_{\mathcal V(I)}\le C(s_1-s_0)^{1/2}
	\|\Theta^1-\Theta^2\|_{\mathcal V(I)}.$
	
	Choose $\delta_0\in(0,1]$ such that $C\delta_0^{1/2}<1$. For $s_1-s_0\le\delta_0$,
	$\Gamma_I$ is a contraction on $\mathcal V(I)$. Hence it admits a unique fixed point
	$\Theta^*=(P,Q,R,\ell).$  The corresponding scalar and vector BSDE solutions satisfy 	\eqref{eq:scalaradjoint} and \eqref{eq:vectoradjoint} on $I$.
	
	Starting with the terminal values \eqref{eq:scalaradjoint}--\eqref{eq:vectoradjoint}
	at $T$, choose a partition
	\[ 0=t_0<t_1<\cdots<t_N=T, \qquad t_{j+1}-t_j\le\delta_0.
	\]
	The preceding construction gives a unique solution on $[t_{N-1},T]$. Its values at $t_{N-1}$ belong to $L^2(\mathcal F_{t_{N-1}})$ and serve as terminal conditions on
	$[t_{N-2},t_{N-1}]$. Repeating this procedure yields a solution on $[0,T]$ satisfying
	\eqref{eq:adjointspaces}. Uniqueness follows by applying the same contraction estimate
	successively on the subintervals.
	
	Finally, since $0\le\bar M\le\CM$, \[
	\EE\int_0^T\bar M_{t-}^2 	|\ell_t|^2\,\drm t\le\CM^2
	\EE\int_0^T|\ell_t|^2\,\drm t=\frac{\CM^2}{c}	\|\ell\|_{\mathcal J_Y^2([0,T])}^2
	<\infty.\] 
	Moreover, since 
	\begin{align}
		H_a(t)&=\bar f_t+\sum_k\ell_{k,t}(\bar\lambda_k(t)-c),\label{eq:Ha}\\
		H_u(t)&=\bar M_{t-}\bar f_u(t)+\bar b_u(t)^*P_{t-}
		+\bar\sigma_u(t)^*Q_t
		+\int_E\bar\eta_u(t,\zeta)^*R_t(\zeta)\nu(d\zeta)
		+\bar M_{t-}\sum_k\ell_{k,t}\bar\lambda_{k,u}(t),
		\label{eq:Hu}
	\end{align} 	then by the boundedness of the derivatives of $b,\sigma,\eta,\lambda_k$, we have 
	\[ |H_u(t)|\le C\bigg(\bar M_{t-}|\bar f_u(t)|+|P_{t-}|+|Q_t|+\|R_t\|_{L^2(\nu)}
	+\bar M_{t-}|\ell_t|\bigg).
	\]
	The quadratic growth condition implies $\EE\int_0^T	|\bar M_{t-}\bar f_u(t)|^2	\,\drm t<\infty.$ Combining these estimates with \eqref{eq:adjointspaces} and
	\eqref{eq:Mell} yields $\EE\int_0^T|H_u(t)|^2\,\drm t<\infty.$ 
\end{proof}

	The final lemma transfers the copied variation to the adjoint factor. It is used in Proposition~\ref{prop:duality} and in the terminal-cost identity.

\begin{lemma}\label{lem:copyduality}
	For an integrable contraction with a random scalar, vector, matrix, or
	$L^2(\nu)$-valued factor $A_t$,
	\begin{equation}\label{eq:copyduality}
			\EE[A_t\cdot D_\mu\phi(t;\chi,K)]
			=\EE\left[K_{t-}\EE'[A_t'\cdot\bar\phi_\mu'(t;\bar X_{t-})]\right]+\EE\left[\bar M_{t-}\chi_{t-}\cdot
			\EE'[(\bar\phi_{\mu,1}'(t;\bar X_{t-}))^*A_t']\right].
	\end{equation}
\end{lemma}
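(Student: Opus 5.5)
The plan is to expand $D_\mu\phi$ via its definition \eqref{eq:Dmu} and then apply Fubini's theorem on the product space $(\Omega\times\Omega',\QQ\otimes\QQ')$ together with the symmetry of the independent copy. By \eqref{eq:Dmu},
\[
\EE[A_t\cdot D_\mu\phi(t;\chi,K)]
=\EE\Bigl[A_t\cdot\EE'\bigl[K_{t-}'\bar\phi_\mu(t;\bar X_{t-}')\bigr]\Bigr]
+\EE\Bigl[A_t\cdot\EE'\bigl[\bar M_{t-}'\bar\phi_{\mu,1}(t;\bar X_{t-}')\chi_{t-}'\bigr]\Bigr].
\]
Here $\bar\phi_\mu$ is evaluated at the unprimed arguments $(t,\bar X_{t-},\bar\mu_t,\bar u_t)$ and at the primed evaluation variable. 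I would treat the two terms separately and write each as a single double expectation $\EE\otimes\EE'$ of a product. This step needs joint integrability, which I address below.

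In the first term the integrand is $A_t(\omega)\cdot\bar\phi_\mu(t,\omega;\bar X_{t-}(\omega'))K_{t-}(\omega')$. The primed copy has the same law as the original, and the product measure is invariant under swapping $\omega$ and $\omega'$, so I would interchange the roles of the two copies. This turns the integrand into $K_{t-}(\omega)\,A_t(\omega')\cdot\bar\phi_\mu'(t;\bar X_{t-}(\omega))$, where the prime on $\bar\phi_\mu'$ now means the coefficient arguments are taken from $\omega'$. Integrating first in $\omega'$ gives $\EE[K_{t-}\EE'[A_t'\cdot\bar\phi_\mu'(t;\bar X_{t-})]]$.

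The second term is handled the same way. After the swap, the contraction $A_t'\cdot(\bar\phi_{\mu,1}'\chi_{t-})$ is rewritten as $\chi_{t-}\cdot(\bar\phi_{\mu,1}')^*A_t'$ using the adjoint of the Jacobian, following the contraction convention fixed after \eqref{eq:Dmu}. For the $L^2(\nu)$-valued case the adjoint is the pairing $\int_E(\cdot)\,\nu(\drm\zeta)$, interchanged with $\EE'$ as a Bochner integral, as in Remark~\ref{rem:kernel}. The factor $\bar M_{t-}$ is a deterministic function of $\omega$ after the swap, so it can be pulled out of $\EE'$. This yields \eqref{eq:copyduality}.

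The main obstacle is justifying Fubini's theorem, that is, absolute integrability of both products on $\Omega\times\Omega'$. I would use the derivative bounds of Assumption~\ref{ass:regularity}: $|\bar\phi_\mu(\cdot;y)|\le C(1+|y|)$ for $\phi\in\mathcal C$ (quadratic weights for $f$), and $\bar\phi_{\mu,1}$ is bounded. Combined with $0\le\bar M\le\CM$, the moment bounds \eqref{eq:forwardmoment} and \eqref{eq:variationmoment} for $(\bar X,\chi,K)$, and the integrability of $A_t$ (in the applications $A_t$ lies in $L^2$, for example $P_{t-},Q_t,R_t,\bar M_{t-}\ell_t$), the Cauchy--Schwarz inequality applied in each variable separately gives finiteness. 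This is exactly what the hypothesis ``integrable contraction'' is meant to cover. I would state the resulting integrability bound once and note that it applies verbatim to the vector, matrix and $L^2(\nu)$-valued factors, with the norm of $B_p$ replacing absolute values for $\eta$.
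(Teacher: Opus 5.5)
Your proposal is correct and follows the paper's proof essentially step by step. You expand $D_\mu\phi$ via \eqref{eq:Dmu}, exchange the roles of the original and the independent copy using Fubini's theorem and the symmetry of $\QQ\otimes\QQ'$, move the Jacobian onto $A_t'$ through its adjoint, and justify Fubini by Cauchy--Schwarz using the growth bounds of Assumption~\ref{ass:regularity} (with quadratic weights for $f$) together with \eqref{eq:forwardmoment} and \eqref{eq:variationmoment}.
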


\begin{proof}
	By the definition of $D_\mu\phi$ in \eqref{eq:Dmu}, we have
	\[ \EE[A_t\cdot D_\mu\phi(t;\chi,K)]=\EE\EE'\left[K_{t-}'A_t\cdot\bar\phi_\mu(t;\bar X_{t-}')
	\right]+\EE\EE'\left[\bar M_{t-}'A_t\cdot\bigl(	\bar\phi_{\mu,1}(t;\bar X_{t-}')\chi_{t-}'
	\bigr)\right].
	\]
	By Fubini's theorem and the symmetry of the independent copies,
	\[  \EE\EE'\left[K_{t-}'A_t\cdot\bar\phi_\mu(t;\bar X_{t-}')\right]=\EE\EE'\left[
	K_{t-}A_t'\cdot\bar\phi_\mu'(t;\bar X_{t-})\right]=\EE\left[K_{t-}\EE'\left[
	A_t'\cdot\bar\phi_\mu'(t;\bar X_{t-})\right]\right].
	\]
	Similarly,
	\begin{align}
		&\EE\EE'\left[\bar M_{t-}'A_t\cdot\bigl(\bar\phi_{\mu,1}(t;\bar X_{t-}')\chi_{t-}'
		\bigr)\right]\\
		&=\EE\EE'\left[\bar M_{t-}A_t'\cdot\bigl(	\bar\phi_{\mu,1}'(t;\bar X_{t-})
		\chi_{t-}\bigr)\right]\\
		&=\EE\EE'\left[\bar M_{t-}\chi_{t-}\cdot(\bar\phi_{\mu,1}'(t;\bar X_{t-}))^*
		A_t'\right]\\
		&=\EE\left[\bar M_{t-}\chi_{t-}\cdot\EE'\left[(\bar\phi_{\mu,1}'(t;\bar X_{t-}))^*
		A_t'\right]\right].
	\end{align}
	Combining these identities gives \eqref{eq:copyduality}.
	
	We verify the integrability required for Fubini's theorem. For
	$\phi\in\{b,\sigma,\lambda_1,\ldots,\lambda_n\}$, based on 	Assumption~\ref{ass:regularity},
	\[  \EE\EE'\left[	|A_t||K_{t-}'||\bar\phi_\mu(t;\bar X_{t-}')|\right]\le C\|A_t\|_{L^2}
	\|K_{t-}\|_{L^2}\|1+|\bar X_{t-}|\|_{L^2},
	\]
	and
	\[\EE\EE'\left[|A_t|\bar M_{t-}'|\bar\phi_{\mu,1}(t;\bar X_{t-}')||\chi_{t-}'|\right]\le C\CM
	\|A_t\|_{L^2}\|\chi_{t-}\|_{L^2}.
	\]
	The same estimates hold for $\eta$ with the $L^2(\nu)$ inner product and
	corresponding norms.
	
	By \eqref{eq:forwardmoment}, \eqref{eq:variationmoment} and \eqref{eq:adjointspaces}, the right-hand sides are integrable over $[0,T]$ for the adjoint factors appearing in
	the duality formula.
	
	For the cost term $\phi=f$ with $A_t=\bar M_{t-}$, set $\bar A_2(t)=1+|\bar X_{t-}|^2
	+m_2(\bar\mu_t)+|\bar u_t|^2$. By \eqref{eq:costderivatives}, $ |\bar f_\mu(t;y)| +|\bar f_{\mu,1}(t;y)|\le C\bar A_2(t)(1+|y|^2).$  Therefore, 
	\[  \EE\EE'\left[\bar M_{t-}|K_{t-}'||\bar f_\mu(t;\bar X_{t-}')|\right]\le C\CM\EE[\bar A_2(t)]
	\|K_{t-}\|_{L^2}\|1+|\bar X_{t-}|^2\|_{L^2},
	\]
	and
	\[\EE\EE'\left[\bar M_{t-}\bar M_{t-}'|\bar f_{\mu,1}(t;\bar X_{t-}')||\chi_{t-}'|\right]\le C\CM^2\EE[\bar A_2(t)]\|\chi_{t-}\|_{L^2}	\|1+|\bar X_{t-}|^2\|_{L^2}.
	\]
	These quantities are integrable in time by the forward moment estimates and the
	admissibility of $\bar u$. The terminal cost is treated in the same way, with
	the control variable omitted.
\end{proof}

	\begin{proposition}\label{prop:duality}
	For every baseline $\bar u\in\Uad$ and competitor $v\in\Uad$, with $h=v-\bar u$,  then 
	\begin{equation}\label{eq:duality}
		J_1(h)=\EE\int_0^T H_u(t)\cdot h_t\dd t.
	\end{equation}
\end{proposition}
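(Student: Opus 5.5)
The plan is a duality argument. We start from the expression \eqref{eq:J1} for $J_1(h)$ and use the terminal matching \eqref{eq:terminalduality} to replace $\EE[K_T\bar\Phi+\bM_TD\Phi_T]$ by $\EE[p_TK_T+P_T\cdot\chi_T]$. It then suffices to show
\[
\EE[p_TK_T+P_T\cdot\chi_T]+\EE\int_0^T(K_t\bar f_t+\bM_tDf_t)\dd t=\EE\int_0^TH_u(t)\cdot h_t\dd t .
\]

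\textbf{Step 1: It\^o product formula.} We apply It\^o's product formula to $p_tK_t$ and to $P_t\cdot\chi_t$, using \eqref{eq:scalaradjoint}, \eqref{eq:vectoradjoint}, \eqref{eq:chi} and \eqref{eq:K}, with $p_0K_0=0$ and $P_0\cdot\chi_0=0$. The covariation terms are computed from the mutual orthogonality of $W$, $\Nt$ and the $\Yt_k$.
\begin{itemize}
\item $[p,K]$ contributes $c\sum_k\ell_{k,t}\big[(\bar\lambda_k/c-1)K_{t-}+\bM_{t-}D\lambda_k/c\big]\dd t$ in expectation.
\item $[P,\chi]$ contributes $\tr(Q_t^\top D\sigma_t)+\int_E R_t\cdot D\eta_t\,\nu(\mathrm d\zeta)$.
\end{itemize}
The stochastic integrals are true martingales. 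This follows from the $\mathcal B^2$ bounds of Proposition~\ref{prop:adjoint}, the $\Sp^p$ bounds \eqref{eq:variationmoment} for every $p$, and $0\le\bM\le\CM$. Indeed, BDG gives $\EE\sup_t|\int p_{t-}\dd K|\le C\|p\|_{\Sp^2}\|\cdot\|$ integrands etc.; all products are in $L^1$ by Cauchy--Schwarz. Taking expectations, the $\ell K$ terms from $-H_a$ cancel against the $K$-part of the $[p,K]$ covariation: the $-c\,\ell_k K$ and $\ell_k\bar\lambda_k K$ pieces match. The $\ell_k\bM D\lambda_k$ pieces match the $a\sum_k\ell_k\lambda_k$ part of $H$ differentiated in $x$, $u$ and $\mu$.

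\textbf{Step 2: separate the local and mean-field parts.} We expand each $D\varphi=\bar\varphi_x\chi_{t-}+\Dmu\varphi(t;\chi,K)+\bar\varphi_uh_t$. The local $x$-terms, pairing $P_{t-}\cdot\bar b_x\chi$, $Q\cdot\bar\sigma_x\chi$, $R\cdot\bar\eta_x\chi$, $\bM\ell\bar\lambda_{k,x}\chi$ and $\bM\bar f_x\chi$, cancel against $-H_x\cdot\chi$. The $u$-terms survive and form exactly $H_u(t)\cdot h_t$ by \eqref{eq:Hu}. For the mean-field terms we invoke Lemma~\ref{lem:copyduality} with
\[
A_t\in\{P_{t-},\;Q_t,\;R_t,\;\bM_{t-}\ell_{k,t},\;\bM_{t-}\}.
\]
The lemma rewrites $\EE[A_t\cdot\Dmu\varphi(t;\chi,K)]$ as $\EE[K_{t-}\EE'[A'_t\cdot\bar\varphi'_\mu(t;\bX_{t-})]]+\EE[\bM_{t-}\chi_{t-}\cdot\EE'[(\bar\varphi'_{\mu,1})^*A'_t]]$. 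Summed over the coefficients, these are precisely $\EE[K_{t-}\EE'[H_\mu(t,\omega';\bX_{t-})]]$ and $\EE[\bM_{t-}\chi_{t-}\cdot\EE'[H_{\mu,1}(t,\omega';\bX_{t-})]]$. They therefore cancel the mean-field parts of the adjoint drivers, that is, $g^0_\mu+G_\mu$ and $\bM(g^0_{\mu,1}+G_{\mu,1})$ in the proof of Proposition~\ref{prop:adjoint}. The remaining running-cost term $K_t\bar f_t$ cancels the $\bar f$ part of $H_a$. Replacing $K_t$ and $\bM_t$ by their left limits under $\dd t$ is harmless, since the jump times are countable.

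\textbf{Main obstacle.} The key difficulty is the bookkeeping of the mean-field terms. We must check that the primed and unprimed evaluation conventions in $H_\mu$, namely the derivative evaluated at the copy's baseline arguments with evaluation point $\bX_{t-}$, match exactly what Lemma~\ref{lem:copyduality} produces. We must also confirm that the likelihood weight $\bM'_{t-}$ enters $G_\mu$ only through the $\lambda$ and $f$ terms, consistent with $H$ being multiplied by $a$ only in those terms. The integrability needed for Fubini is supplied in the proof of Lemma~\ref{lem:copyduality}. A secondary point is justifying the martingale property of $\int\chi_{t-}\cdot\dd P$ given only $\mathcal B^2$ adjoints. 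I would handle this via BDG with $\EE\big(\int|\chi|^2|Q|^2\dd t\big)^{1/2}\le\|\chi\|_{\Sp^2}\|Q\|_{\HW^2}$, and similarly for the jump parts, or by localization followed by dominated convergence.
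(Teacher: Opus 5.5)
Your proposal is correct and follows essentially the same route as the paper. Both use It\^o's product formula for $p_tK_t$ and $P_t\cdot\chi_t$, cancel the $\ell_kK_{t-}$ terms through $H_a$ and the local $x$-terms against $H_x$, apply Lemma~\ref{lem:copyduality} to the mean-field pieces (including $A_t=\bM_{t-}$ for $D_\mu f$) to recover $\EE'[H_\mu]$ and $\EE'[H_{\mu,1}]$, and remove the stochastic integrals with the BDG/Cauchy--Schwarz bound $\EE(\int|V_{t-}|^2\,\drm[N]_t)^{1/2}\le\|V\|_{\Sp^2}(\EE[N]_T)^{1/2}$; the only difference is bookkeeping, since you cancel in the combined expression $\EE[p_TK_T+P_T\cdot\chi_T]+\EE\int_0^T(K_t\bar f_t+\bM_tDf_t)\dd t$ at once, while the paper first isolates $\mathcal C_\mu(t)$ and subtracts the $f$-contribution separately.
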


\begin{proof}
	For $k=1,\ldots,n$, let  $J_{k,t}=\left(\frac{\bar\lambda_k(t)}c-1\right)K_{t-}
	+\frac{\bar M_{t-}}cD\lambda_k(t).$  Then, by \eqref{eq:K},
	\[
	\drm K_t=\sum_{k=1}^nJ_{k,t}\,\drm\Yt_k(t).
	\]
	Since the observation processes have no common jumps with the state Poisson random measure, the quadratic covariation of $p$ and $K$ is
	\[
	\drm [p,K]_t
	=\sum_{k=1}^n\ell_{k,t}J_{k,t}\,\drm Y_k(t).
	\]
	Its predictable compensator is
	\begin{equation}\label{eq:pKbracket}
		\begin{aligned}
			\drm \langle p,K\rangle_t
			=c\sum_{k=1}^n\ell_{k,t}J_{k,t}\,\drm t=\sum_{k=1}^n\ell_{k,t}\left[(\bar\lambda_k(t)-c)K_{t-}+\bar M_{t-}D\lambda_k(t)
			\right]\drm t.
		\end{aligned}
	\end{equation}
	
	Applying It\^o's product formula to
	$p_tK_t$ and taking expectations gives
	\[\EE[p_TK_T]=\EE\int_0^T\bigg[-K_{t-}H_a(t)-K_{t-}\EE'[H_\mu(t,\omega';\bar X_{t-})]+c\sum_{k=1}^n\ell_{k,t}J_{k,t}
	\bigg]\drm t.
	\]
	Since $H_a(t)=\bar f_t+\sum_{k=1}^n\ell_{k,t}(\bar\lambda_k(t)-c)$, we obtain
	\begin{equation}\label{eq:pKidentity} 
		\EE[p_TK_T]=\EE\int_0^T\bigg[-K_{t-}\bar f_t-K_{t-}\EE'[H_\mu(t,\omega';\bar X_{t-})]+\bar M_{t-}\sum_{k=1}^n\ell_{k,t}D\lambda_k(t)\bigg]\drm t.
	\end{equation}
	
	Similarly, by \eqref{eq:chi} and \eqref{eq:vectoradjoint}, the predictable quadratic covariation of $P$ and $\chi$ is given by
	\[ \drm \langle P,\chi\rangle_t=\bigg[\tr(Q_t^\top D\sigma_t)+\int_E R_t(\zeta)\cdot
	D\eta_t(\zeta)\nu(d\zeta)\bigg]\drm t.
	\]
	Indeed, $\chi$ has no martingale integral with respect to $\Yt_k$. Applying It\^o's product formula to $P_t\cdot\chi_t$ yields
	\begin{align}\label{eq:Pchiidentity} 
		\EE[P_T\cdot\chi_T]=\EE\int_0^T\bigg[&-\chi_{t-}\cdot H_x(t)-\bar M_{t-}\chi_{t-}\cdot
		\EE'[H_{\mu,1}(t,\omega';\bar X_{t-})]+P_{t-}\cdot Db_t\\
		&+\tr(Q_t^\top D\sigma_t)+\int_E R_t(\zeta)\cdot D\eta_t(\zeta)\nu(d\zeta)\bigg]\drm t. 
	\end{align}
	
	Let $\mathcal C_\mu(t)=P_{t-}\cdot D_\mu b(t;\chi,K)+\tr\bigl(Q_t^\top D_\mu\sigma(t;\chi,K)\bigr)+\int_E R_t(\zeta)\cdot D_\mu\eta(t;\chi,K,\zeta)\nu(d\zeta)+\bar M_{t-}\sum_{k=1}^n\ell_{k,t}D_\mu\lambda_k(t;\chi,K).$ 
	By the definitions of the coefficient variations and the Hamiltonian function,
	\[
	\begin{aligned}
		&P_{t-}\cdot Db_t
		+\tr(Q_t^\top D\sigma_t)
		+\int_E R_t(\zeta)\cdot
		D\eta_t(\zeta)\nu(d\zeta) +
		\bar M_{t-}\sum_{k=1}^n
		\ell_{k,t}D\lambda_k(t)\\
		&=
		\bigl(H_x(t)-\bar M_{t-}\bar f_x(t)\bigr)
		\cdot\chi_{t-} +
		\bigl(H_u(t)-\bar M_{t-}\bar f_u(t)\bigr)
		\cdot h_t
		+\mathcal C_\mu(t).
	\end{aligned}
	\]
	
	Moreover, applying
	Lemma~\ref{lem:copyduality} to the
	four terms of $\mathcal C_\mu(t)$ gives
	\[ 
	\begin{aligned}
		\EE[\mathcal C_\mu(t)]
		={}&\EE\bigg[K_{t-}\EE'\big[
		H_\mu(t,\omega';\bar X_{t-})
		-\bar M_{t-}'\bar f_\mu'(t;\bar X_{t-})\big]\bigg]\\
		&+\EE\bigg[\bar M_{t-}\chi_{t-}\cdot\EE'\big[
		H_{\mu,1}(t,\omega';\bar X_{t-})
		-\bar M_{t-}'\bar f_{\mu,1}'(t;\bar X_{t-})\big]\bigg].
	\end{aligned}
	\]
	Also by using 	Lemma~\ref{lem:copyduality} ,
	\[  \EE\bigg[
	K_{t-}\EE'\big[
	\bar M_{t-}'\bar f_\mu'
	(t;\bar X_{t-})
	\big]\bigg] +
	\EE\bigg[
	\bar M_{t-}\chi_{t-}\cdot
	\EE'\big[
	\bar M_{t-}'\bar f_{\mu,1}'
	(t;\bar X_{t-})
	\big]\bigg] =\EE\left[
	\bar M_{t-}D_\mu f(t;\chi,K)
	\right]. 
	\]
	Therefore, 
	\[ 
	\EE[\mathcal C_\mu(t)]
	=\EE\bigg[ K_{t-}\EE'
	[H_\mu(t,\omega';\bar X_{t-})]+\bar M_{t-}\chi_{t-}\cdot
	\EE'[H_{\mu,1}
	(t,\omega';\bar X_{t-})]-\bar M_{t-}
	D_\mu f(t;\chi,K)
	\bigg]. 
	\]
	
	Adding \eqref{eq:pKidentity} and
	\eqref{eq:Pchiidentity}, and using
	the preceding identities, we obtain
	\[  \EE[p_TK_T+P_T\cdot\chi_T]=\EE\int_0^T\bigg[
	H_u(t)\cdot h_t
	-K_{t-}\bar f_t-\bar M_{t-}
	\bigl(
	\bar f_x(t)\cdot\chi_{t-}
	+\bar f_u(t)\cdot h_t
	+D_\mu f(t;\chi,K)
	\bigr)
	\bigg]\drm t. 
	\]
	Since $
	Df_t
	=\bar f_x(t)\cdot\chi_{t-}
	+\bar f_u(t)\cdot h_t
	+D_\mu f(t;\chi,K),$ 
	and $K_t=K_{t-}$,
	$\bar M_t=\bar M_{t-}$ for
	$dt\otimes d\QQ$-almost every $(t,\omega)$,
	we conclude that
	\begin{equation}\label{eq:dualityintermediate} 
		\EE[p_TK_T+P_T\cdot\chi_T]=\EE\int_0^T
		\left(
		H_u(t)\cdot h_t
		-K_t\bar f_t
		-\bar M_tDf_t
		\right)\drm t. 
	\end{equation}
	Finally, by \eqref{eq:terminalduality} and \eqref{eq:J1},
	\[ 
	J_1(h)
	= \EE[p_TK_T+P_T\cdot\chi_T]+\EE\int_0^T
	(K_t\bar f_t+\bar M_tDf_t)\,\drm t
	=\EE\int_0^T H_u(t)\cdot h_t\,\drm t, 
	\]
	which proves \eqref{eq:duality}.
	
	We now justify the expectations in the
	preceding product formulas.
	By \eqref{eq:variationmoment} and
	\eqref{eq:adjointspaces}, the variation
	processes and the adjoint processes
	belong to $\mathcal S^2$, and their
	martingale integrands are square
	integrable. In particular, $
	\EE\int_0^T\int_E
	|R_t(\zeta)\cdot D\eta_t(\zeta)|
	\nu(d\zeta)\,\drm t \le
	\|R\|_{\mathcal J_N^2}
	\|D\eta\|_{\mathcal J_N^2}<\infty, $
	and $
	\EE\int_0^T
	c\sum_{k=1}^n|\ell_{k,t}J_{k,t}|
	\,\drm t\le
	\|\ell\|_{\mathcal J_Y^2}
	\|J\|_{\mathcal J_Y^2}<\infty.$
	
	For any $V\in\mathcal S^2$ and
	square-integrable martingale $N$,
	using	the Cauchy--Schwarz inequality yields
	\[  \EE\left(
	\int_0^T|V_{t-}|^2\,\drm[N]_t
	\right)^{1/2}\le
	\left(
	\EE\sup_{t\le T}|V_t|^2
	\right)^{1/2}
	\left(\EE[N]_T\right)^{1/2}
	<\infty. 
	\]
	Then the Burkholder--Davis--Gundy inequality implies that the stochastic
	integrals appearing in the product formulas are uniformly integrable martingales and have expectation zero.
	
	Furthermore, $\EE\sup_{t\le T}|p_tK_t|\le\|p\|_{\mathcal S^2}\|K\|_{\mathcal S^2}$, and $
	\EE\sup_{t\le T}|P_t\cdot\chi_t|\le\|P\|_{\mathcal S^2}\|\chi\|_{\mathcal S^2}.$ 
	Together with the preceding estimates, these bounds justify passage to the
	limit in the localized product formulas.
\end{proof}
	
 Since admissible controls are observation-predictable, their pairing with $H_u$ depends only on the observation-predictable projection introduced below.  Let $\mathcal P^Y$ denote the predictable $\sigma$-field associated with $\mathbb F^Y$.
Since $H_u\in L^2(dt\otimes d\mathbb Q)$, we define
\begin{equation}\label{eq:projection}
	\widehat H_u =\mathbb E^{dt\otimes\mathbb Q}[H_u\mid\mathcal P^Y].
\end{equation}
Then, for every $\mathbb F^Y$-predictable $\xi\in L^2(dt\otimes d\mathbb Q)$,
\begin{equation}\label{eq:projectionduality}
	\mathbb E\int_0^T H_u(t)\cdot\xi_t\,\mathrm dt
	=\mathbb E\int_0^T \widehat H_u(t)\cdot\xi_t\,\mathrm dt.
\end{equation}
We now state the necessary conditions of the stochastic maximum principle.
\begin{theorem}\label{thm:SMP}
	Suppose that Assumptions~\ref{ass:state}--
	\ref{ass:regularity} hold. Let
	$\bar u\in\cU_{ad}$ be an optimal control.
	Then, for $dt\otimes d\QQ$-almost every
	$(t,\omega)$,
	\begin{equation}\label{eq:normalineq}
		\widehat H_u(t)\cdot(w-\bar u_t)\ge0,
		\qquad \forall w\in U.
	\end{equation}
	Equivalently,
	\begin{equation}\label{eq:normalcone}
		-\widehat H_u(t)\in N_U(\bar u_t),
	\end{equation}
	where $
	N_U(x)=\{z\in\R^m:
	z\cdot(w-x)\le0,\ \forall w\in U\}.$ 
\end{theorem}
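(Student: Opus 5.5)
The plan is to combine Theorem~\ref{thm:costderivative} with Proposition~\ref{prop:duality} and the projection identity \eqref{eq:projectionduality}, and then localize the resulting variational inequality. Since $\bar u$ is optimal, Theorem~\ref{thm:costderivative} gives $J_1(v-\bar u)\ge0$ for every $v\in\Uad$. By Proposition~\ref{prop:duality} this reads $\EE\int_0^T H_u(t)\cdot(v_t-\bar u_t)\dd t\ge0$. The difference $h=v-\bar u$ is $\FF^Y$-predictable and lies in $L^2(\dd t\otimes \drm\QQ)$, because admissible controls have all moments. Hence \eqref{eq:projectionduality} lets us replace $H_u$ by $\widehat H_u$:
\[
\EE\int_0^T \widehat H_u(t)\cdot(v_t-\bar u_t)\dd t\ge0,\qquad\forall v\in\Uad.
\]

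To localize, I will use spike-type competitors built from deterministic points. Fix $w\in U$ and a set $G\in\mathcal P^Y$, and define $v=w\ind_G+\bar u\ind_{G^c}$. This $v$ is $\FF^Y$-predictable and $U$-valued. Every moment of $v$ is bounded by $|w|^r T$ plus the corresponding moment of $\bar u$, so $v\in\Uad$. The inequality above then gives $\EE\int_0^T\ind_G\,\widehat H_u(t)\cdot(w-\bar u_t)\dd t\ge0$ for all $G\in\mathcal P^Y$.

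The integrand $\widehat H_u\cdot(w-\bar u)$ is $\mathcal P^Y$-measurable and integrable, since both factors are in $L^2$. Taking $G=\{\widehat H_u\cdot(w-\bar u)<0\}$ therefore shows that, for each fixed $w$, the integrand is nonnegative $\dd t\otimes \drm\QQ$-a.e.

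The main point of care is the order of quantifiers. Each $w$ carries its own null set, so I will first apply the argument to a countable dense subset $D\subset U$, for example points of $U$ with rational coordinates approximated in $U$, and take the union of the countably many null sets. Outside this single null set, continuity of $w\mapsto\widehat H_u(t)\cdot(w-\bar u_t)$ and density of $D$ in $U$ extend the inequality to all $w\in U$, which is \eqref{eq:normalineq}. Here $U$ is a closed subset of $\R^m$, hence separable, so such a $D$ exists.

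Finally, \eqref{eq:normalcone} is just a restatement. Setting $z=-\widehat H_u(t)$, the inequality becomes $z\cdot(w-\bar u_t)\le0$ for all $w\in U$, which is exactly the definition of $N_U(\bar u_t)$.
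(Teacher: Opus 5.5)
Your proposal is correct and follows the paper's proof essentially step for step. You combine Theorem~\ref{thm:costderivative}, Proposition~\ref{prop:duality} and \eqref{eq:projectionduality}, localize with the competitor $w\ind_G+\bar u\ind_{G^c}$ and $G=\{\widehat H_u\cdot(w-\bar u)<0\}$, and extend to all $w\in U$ through a countable dense subset; your explicit checks of admissibility, $h\in L^2$ and separability of $U$ are details the paper leaves implicit.
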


\begin{proof}
	By Theorem~\ref{thm:costderivative},
	Proposition~\ref{prop:duality}, and
	\eqref{eq:projectionduality}, for any $v\in\cU_{ad}$,
	\[
	\EE\int_0^T
	\widehat H_u(t)\cdot(v_t-\bar u_t)
	\,\drm t\ge0.
	\]
	Fix $w\in U$ and $B\in\mathcal P^Y$.
	Taking $
	v_t=w\ind_B(t)+\bar u_t\ind_{B^c}(t),$ 
	we obtain $
	\EE\int_0^T
	\ind_B(t)\widehat H_u(t)
	\cdot(w-\bar u_t)\,\drm t\ge0.$ 
	
	Since $
	g_w(t)=\widehat H_u(t)
	\cdot(w-\bar u_t)$ 
	is predictable and integrable,
	we may take $B=\{g_w<0\}$.
	Therefore,  $
	\EE\int_0^T
	\ind_{\{g_w<0\}}g_w(t)\,\drm t
	\ge0.$ 
	Since the integrand is nonpositive,
	we have $
	g_w(t)\ge0$, $\drm t\otimes \drm \QQ$-a.e. 
	
	Let $\{w_j\}_{j\ge1}$ be a countable
	dense subset of $U$.
	There exists a common
	$\drm t\otimes \drm \QQ$-null set outside which $
	\widehat H_u(t)\cdot(w_j-\bar u_t)\ge0,$ for any $j\geq 1$.
	By continuity in $w$, $
	\widehat H_u(t)\cdot(w-\bar u_t)\ge0,$ $\forall w\in U.$ 
	
	This proves \eqref{eq:normalineq}, and \eqref{eq:normalcone} follows
	from the definition of $N_U$.
\end{proof}

	\begin{corollary}\label{cor:stationarity}
	Under the assumptions of
	Theorem~\ref{thm:SMP}, $
	\widehat H_u(t)=0$ on $\{\bar u_t\in\Int(U)\}$, $
	\drm t\otimes \drm \QQ$-a.e.
\end{corollary}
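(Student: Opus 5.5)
The plan is to read the claim off the variational inequality \eqref{eq:normalineq} of Theorem~\ref{thm:SMP}, with one local-perturbation step. Fix the $\drm t\otimes\drm\QQ$-null set outside which $\widehat H_u(t)\cdot(w-\bar u_t)\ge0$ for all $w\in U$. This set is common to all $w$, which is exactly what the countable dense subset argument in the proof of Theorem~\ref{thm:SMP} provides. Take $(t,\omega)$ outside this null set with $\bar u_t(\omega)\in\Int(U)$.

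At such a point we have $\bar u_t+r e\in U$ for some $r>0$ and every unit vector $e\in\R^m$. Substituting $w=\bar u_t\pm re$ into \eqref{eq:normalineq} gives
\[
\pm r\,\widehat H_u(t)\cdot e\ge0 .
\]
Hence $\widehat H_u(t)\cdot e=0$ for every unit vector $e$, and so $\widehat H_u(t)=0$. Equivalently, $N_U(x)=\{0\}$ at interior points $x$, and then \eqref{eq:normalcone} forces $-\widehat H_u(t)=0$.

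The only point that needs care is measurability and null sets. The radius $r$ may depend on $(t,\omega)$, but no uniformity is required, since the argument is pointwise outside one fixed null set. The set $\{\bar u_t\in\Int(U)\}$ is $\mathcal P^Y$-measurable, because $\bar u$ is predictable and $\Int(U)$ is open. The conclusion therefore holds $\drm t\otimes\drm\QQ$-a.e. on that set, and no real obstacle arises.
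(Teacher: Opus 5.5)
Your proposal is correct and takes essentially the same route as the paper. The paper substitutes $w=\bar u_t\pm\delta e_i$ into \eqref{eq:normalineq}, using only the canonical basis vectors $e_i$ rather than all unit directions, outside the common null set supplied by Theorem~\ref{thm:SMP}.
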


\begin{proof}
	For $\bar u_t\in\Int(U)$, taking
	$w=\bar u_t\pm\delta e_i$ in
	\eqref{eq:normalineq}, where
	$\{e_i\}_{i=1}^m$ is the canonical
	basis of $\R^m$ and $\delta>0$
	is sufficiently small, gives $
	\widehat H_u(t)\cdot e_i=0,$ for $ i=1,\ldots,m.$  Hence $\widehat H_u(t)=0$.
\end{proof}
	
\section{A quadratic example with distribution-dependent observations}
\label{sec:example}

We consider a stylized monitoring and signal-following problem with one observation channel whose later intensity depends on the unconditional physical variance of an accumulated action. The running cost combines a quadratic penalty on the action, a reward for alignment with the hidden signal, and a fee for early observation effort. Early activation can change this variance and thereby contribute a first-order term to the cost. We identify the optimal zero-activation benchmark and compute a first-order activation threshold within the policy family in \eqref{eq:feedbackpolicy}, under the coercivity and nonbinding conditions stated below.

\subsection{The model and the zero-activation benchmark}

Fix $0<t_0<t_1<T$, and set $L=t_1-t_0$ and $D=T-t_1$.
Use the reference space and natural observation filtration of
Section~\ref{sec:model}, with a Brownian motion $W$ and an independent
rate-$c$ Poisson process $Y$ under $\QQ$. Assume
$0<\theta<1$, $\lambda_0>0$, $|\kappa|<\lambda_0$,
$\beta\in\R$, $k\geq0$ and $V\geq1$.
The control $u=(\alpha,v)$ is $\FF^Y$-predictable with values in
$[0,1]\times[-V,V]$, and the state $X=(S,Z)$ satisfies
\begin{equation}\label{eq:feedbackstate}
	\left\{
	\begin{aligned}
			\mathrm dS_t=&\ind_{\{t<t_0\}}\dd W_t,\\
		\mathrm dZ_t=&\ind_{\{t<t_0\}}(\beta Z_t+v_t)\dd t,\\
		S_0=&Z_0=0.
	\end{aligned}
	\right.
\end{equation}
Thus $S_t=W_{t\wedge t_0}$ and both state components are fixed after
$t_0$. In the general notation, $d=2$, $d_W=n=1$, $\eta=0$,  $
b(t,(s,z),\mu,(\alpha,v))
=\ind_{\{t<t_0\}}(0,\beta z+v)^\top$,  and $
\sigma(t,(s,z),\mu,(\alpha,v))
=\ind_{\{t<t_0\}}(1,0)^\top.$ 

Let $w(t)=e^{\beta(t_0-t)}$, choose
$R>V\int_0^{t_0}w(t)\dd t$, and define
\[
\phi_R(z)=
\begin{cases}
	z^2,&|z|\leq R,\\
	2R|z|-R^2,&|z|>R.
\end{cases}
\]
For $\mu\in\mathcal P_1(\R^2)$, set
\begin{equation}\label{eq:varianceextension}
	m(\mu)=\int y_z\mu(\mathrm dy),\qquad
	\mathcal V_R(\mu)=\int\phi_R(y_z)\mu(\mathrm dy)
	-\phi_R(m(\mu)).
\end{equation}
Since $\phi_R$ is convex and $C^1$, with
$|\phi_R'|\leq2R$ and $\operatorname{Lip}(\phi_R')\leq2$,
$\mathcal V_R$ is finite and nonnegative on $\mathcal P_1(\R^2)$.
Moreover,
\[
|\mathcal V_R(\mu)-\mathcal V_R(\widehat\mu)|
\leq2R\,W_1(\mu,\widehat\mu)
+2R|m(\mu)-m(\widehat\mu)|
\leq4R\,W_1(\mu,\widehat\mu),
\]
and its normalized linear functional derivative is
$(\mathcal V_R)_\mu(\mu;y)
=\phi_R(y_z)-\phi_R'(m(\mu))y_z$.
For $r_t=t\wedge t_0$, solving \eqref{eq:feedbackstate} gives $
Z_t=\int_0^{r_t}e^{\beta(r_t-s)}v_s\dd s$, thus $
|Z_t|\leq V\int_0^{r_t}e^{\beta(r_t-s)}\dd s
\leq V\int_0^{t_0}w(s)\dd s<R.$

The same bound holds under every physical probability, and also bounds
the physical mean. Consequently,
\begin{equation}\label{eq:actualvariance}
	\mathcal V_R(\mu_t^u)
	=\EE^{\PP_\kappa^u}[(Z_t^u)^2]
	-(\EE^{\PP_\kappa^u}Z_t^u)^2
	=\operatorname{Var}_{\PP_\kappa^u}(Z_t^u).
\end{equation}

Set $q(s)=1+\theta\tanh s$ and define
\begin{equation}\label{eq:feedbackintensity}
	\lambda_\kappa(t,(s,z),\mu,(\alpha,v))
	=
	\begin{cases}
		\alpha q(s),&0\leq t<t_0,\\
		[\lambda_0+\kappa\tanh\mathcal V_R(\mu)]q(s),
		&t_0\leq t<t_1,\\
		0,&t_1\leq t\leq T.
	\end{cases}
\end{equation}
The early interval is inactive when $\alpha=0$, whereas the middle
multiplier lies in
$[\lambda_0-|\kappa|,\lambda_0+|\kappa|]\subset(0,\infty)$.
Choose $c\geq(1+\theta)\max\{1,2\lambda_0\}$.
Then $0\leq\lambda_\kappa\leq c$, and
\eqref{eq:weightedlaw} defines $\PP_\kappa^u$.
Since $\mu_t^u=\mu_{t_0}^u$ for $t\geq t_0$, the middle multiplier
$\ell_\kappa^u=\lambda_0+\kappa\tanh\mathcal V_R(\mu_{t_0}^u)$
is deterministic.

The objective is
\begin{equation}\label{eq:feedbackcost}
	J_\kappa(u)=\EE^{\PP_\kappa^u}\left[
	\int_0^{t_0}\left(\frac12v_t^2-v_tS_t+k\alpha_t\right)\dd t
	+\int_{t_1}^T\left(\frac12v_t^2-v_t\tanh S_t\right)\dd t
	\right].
\end{equation}
Thus, in the general notation, $\Phi=0$ and $
f(t,(s,z),\mu,(\alpha,v))
=\ind_{\{t<t_0\}}\left(\frac12v^2-vs+k\alpha\right)
+\ind_{\{t_1<t\leq T\}}\left(\frac12v^2-v\tanh s\right).$

These coefficients satisfy
Assumptions~\ref{ass:state}--\ref{ass:regularity}. Explicitly,
$b,\sigma$ are affine and globally Lipschitz, the control set is compact,
and $f$ and its derivatives have the required growth.
The intensity is bounded and globally Lipschitz in the state, control
and measure variables. Its measure derivative vanishes outside
$[t_0,t_1)$; on that interval,
\begin{equation}\label{eq:feedbackmeasurederivative}
	(\lambda_\kappa)_\mu(t,(s,z),\mu,u;y)
	=\kappa q(s)\sech^2(\mathcal V_R(\mu))
	[\phi_R(y_z)-\phi_R'(m(\mu))y_z].
\end{equation}
This kernel vanishes at $y=0$, grows at most linearly in $y$, and has
bounded continuous evaluation derivative
\[
(\lambda_\kappa)_{\mu,1}(t,(s,z),\mu,u;y)
=\kappa q(s)\sech^2(\mathcal V_R(\mu))
\bigl(0,\phi_R'(y_z)-\phi_R'(m(\mu))\bigr)^\top.
\]
The Lipschitz bounds for $m$, $\mathcal V_R$ and $\phi_R'$ give the
required measure continuity. The deterministic changes at $t_0,t_1$
are consistent with Borel regularity in time.

For a fixed middle multiplier $\ell>0$, let $H=\tanh W_{t_0}$. And,
on an auxiliary space, take
$N^\ell\mid H\sim\operatorname{Poisson}(\ell L(1+\theta H))$.
We denote $\EE_\ell$ as the  expectation under this joint law and define
\begin{equation}\label{eq:poissonweights}
	p_n(\ell)
	=\EE\left[e^{-\ell L(1+\theta H)}
	\frac{[\ell L(1+\theta H)]^n}{n!}\right],
	\qquad n\in\mathbb N_0.
\end{equation}
Set $g_\ell(n)=\EE_\ell[H\mid N^\ell=n]$ and
$v_n(\ell)=\operatorname{Var}_\ell(H\mid N^\ell=n)$.
All $p_n(\ell)$ are positive. For
$\mathcal D_V=\{g:\mathbb N_0\to[-V,V]\}$, let
\[
\mathcal C(\ell,g)
=D\EE_\ell\left[\frac12g(N^\ell)^2-Hg(N^\ell)\right],
\qquad
\mathcal R(\ell)=\inf_{g\in\mathcal D_V}\mathcal C(\ell,g).
\]
The following calculation uses posterior identities for Poisson
observations; see also \citet{dytso2020estimation}.

\begin{lemma}\label{lem:latercost}
	The unique minimizer is $g_\ell$, and
	\begin{equation}\label{eq:latervalue}
		\mathcal R(\ell)=-\frac D2\sum_{n\geq0}p_n(\ell)g_\ell(n)^2.
	\end{equation}
	Moreover, $\mathcal R\in C^2((0,\infty))$ and
	\begin{equation}\label{eq:latervaluederivative}
		\mathcal R'(\ell)
		=-\frac{DL\theta^2}{2}
		\sum_{n\geq0}p_n(\ell)
		\frac{v_n(\ell)^2}{1+\theta g_\ell(n)}<0.
	\end{equation}
	For $g_0=g_{\lambda_0}$,
	\begin{equation}\label{eq:laterdecisiongap}
		0\leq\mathcal C(\ell,g_0)-\mathcal R(\ell)
		\leq\frac{DL^2\theta^2}{2}(\ell-\lambda_0)^2.
	\end{equation}
\end{lemma}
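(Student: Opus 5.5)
The plan is to compute everything from the Poisson posterior structure, conditioning on $N^\ell=n$. Throughout, write $\Lambda=\ell L(1+\theta H)$ and $\pi_n(m)=e^{-m}m^n/n!$, so that $p_n(\ell)=\EE[\pi_n(\Lambda)]$. Set $A_n(\ell)=\EE[H\pi_n(\Lambda)]=p_ng_\ell(n)$ and $s_n=\EE_\ell[H^2\mid N^\ell=n]=v_n+g_\ell(n)^2$.

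\emph{Minimizer and value.} Since $p_n>0$, I would write
\[
\mathcal C(\ell,g)=D\sum_n p_n\Bigl(\tfrac12g(n)^2-g_\ell(n)g(n)\Bigr).
\]
Completing the square gives
\[
\mathcal C(\ell,g)=\frac D2\sum_np_n\bigl(g(n)-g_\ell(n)\bigr)^2-\frac D2\sum_np_ng_\ell(n)^2 .
\]
Because $|H|<1\le V$, we have $g_\ell\in\mathcal D_V$. Hence $g_\ell$ is the unique minimizer, and \eqref{eq:latervalue} follows. The same identity also gives
\[
\mathcal C(\ell,g_0)-\mathcal R(\ell)=\frac D2\sum_np_n\bigl(g_0(n)-g_\ell(n)\bigr)^2,
\]
which reduces \eqref{eq:laterdecisiongap} to a Lipschitz bound on $\ell\mapsto g_\ell(n)$.

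\emph{Derivative identities.} I would use two facts: $\partial_m\pi_n=\pi_{n-1}-\pi_n$, and $\Lambda\pi_{n-1}(\Lambda)=n\pi_n(\Lambda)$. Since $\partial_\ell\Lambda=L(1+\theta H)=\Lambda/\ell$, these give
\[
p_n'=\frac n\ell p_n-Lp_n(1+\theta g_n),\qquad
A_n'=\frac n\ell A_n-Lp_n(g_n+\theta s_n).
\]
Differentiation under $\EE$ and under $\sum_n$ is justified on compact subsets of $(0,\infty)$. There $\Lambda$ lies in a compact interval of $(0,\infty)$, so the series $\sum_n n^j\sup_\ell\pi_n(\Lambda)$ converge uniformly. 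This argument, repeated once more, also gives $\mathcal R\in C^2$.

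From the two identities, the quotient rule yields
\[
\partial_\ell g_\ell(n)=-L\theta(s_n-g_n^2)=-L\theta v_n(\ell).
\]
Since $|H|\le1$ gives $0\le v_n\le1$, we get $|g_\ell(n)-g_0(n)|\le L\theta|\ell-\lambda_0|$. Together with $\sum_np_n=1$, this proves \eqref{eq:laterdecisiongap}.

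\emph{Formula for $\mathcal R'$ (main obstacle).} Writing $\mathcal R=-\frac D2\sum_nA_n^2/p_n$ and differentiating gives
\[
\mathcal R'=-\frac D2\sum_np_n\Bigl[\frac n\ell g_n^2-Lg_n^2-2L\theta g_nv_n-L\theta g_n^3\Bigr].
\]
The term carrying $n/\ell$ must be reindexed, and this is where I expect the work to lie. Using $n\pi_n(\Lambda)=\Lambda\pi_{n-1}(\Lambda)$ gives
\[
\frac n\ell p_n=Lp_{n-1}(1+\theta g_{n-1}),\qquad
g_n=\frac{g_{n-1}+\theta s_{n-1}}{1+\theta g_{n-1}} .
\]
Hence
\[
\sum_n\frac n\ell p_ng_n^2=L\sum_mp_m\frac{(g_m+\theta s_m)^2}{1+\theta g_m}.
\]
Combining the sums, the bracket becomes
\[
\frac{(g+\theta s)^2-(1+\theta g)(g^2+2\theta gv+\theta g^3)}{1+\theta g}.
\]
Substituting $s=v+g^2$, the terms of order $0$ and $1$ in $\theta$ cancel. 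The $\theta^2$ coefficient is $s^2-2g^2v-g^4=v^2$, which yields \eqref{eq:latervaluederivative}.

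\emph{Strict negativity.} Here $1+\theta g_n>0$ and $p_n>0$. Moreover $v_n>0$, because the conditional law of $H$ given $N^\ell=n$ has a density proportional to $\pi_n(\Lambda)$ times the law of $\tanh W_{t_0}$, and this is nondegenerate. Hence $\mathcal R'(\ell)<0$.
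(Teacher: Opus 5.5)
Your proposal is correct and follows essentially the paper's route. The value and the decision gap come from the same completion of the square; the Lipschitz bound comes from the same identity $\partial_\ell g_\ell(n)=-L\theta v_n(\ell)$; and your Poisson identities $\frac n\ell p_n=Lp_{n-1}(1+\theta g_{n-1})$ and $g_{n}=(g_{n-1}+\theta s_{n-1})/(1+\theta g_{n-1})$ are exactly the paper's $p_n'=L(p_{n-1}\bar q_{n-1}-p_n\bar q_n)$ and $g_{n+1}-g_n=\theta v_n/\bar q_n$. The only difference is cosmetic: the paper finishes by recognizing the perfect square $\bar q_n(g_{n+1}-g_n)^2$, whereas you expand the numerator in powers of $\theta$ and cancel the order-$0$ and order-$1$ terms directly.
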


\begin{proof}
	Write $g_n=g_\ell(n)$, $v_n=v_n(\ell)$ and
	$\EE_{\ell,n}[\cdot]=\EE_\ell[\cdot\mid N^\ell=n]$.
	Conditional completion of the square gives
	\[
	\mathcal C(\ell,g)
	=-\frac D2\sum_{n\geq0}p_n(\ell)g_n^2
	+\frac D2\sum_{n\geq0}p_n(\ell)[g(n)-g_n]^2.
	\]
	Since $|g_n|\leq1\leq V$ and $p_n(\ell)>0$, this proves
	\eqref{eq:latervalue} and uniqueness.
	
	Let $q_H=1+\theta H$ and $\bar q_n=1+\theta g_n$.
	For bounded $\varphi$,
	\[
	\EE_{\ell,n}[\varphi(H)]
	=\frac{\EE[\varphi(H)q_H^ne^{-\ell Lq_H}]}
	{\EE[q_H^ne^{-\ell Lq_H}]}.
	\]
	Then, by differentiating this ratio, we have 
	$\partial_\ell\EE_{\ell,n}[\varphi(H)]
	=-L\theta\operatorname{Cov}_{\ell,n}(\varphi(H),H)$.
	Also, $g_{n+1}=\EE_{\ell,n}[Hq_H]/\bar q_n$. Hence
	\begin{equation}\label{eq:posterioridentities}
		\partial_\ell g_n=-L\theta v_n,\qquad
		g_{n+1}-g_n=\frac{\theta v_n}{\bar q_n}.
	\end{equation}
	
	For every compact $K\subset(0,\infty)$, also by differentiating
	\eqref{eq:poissonweights}, we obtain that there exist  constants $C_K,B_K>0$ such that
	\[
	\sup_{\ell\in K}|p_n^{(j)}(\ell)|
	\leq C_K(1+n)^2\frac{B_K^n}{n!},
	\qquad j=0,1,2.
	\]
	Moreover, $|g_n|\leq1$, $|\partial_\ell g_n|\leq L\theta$, and
	$\partial_\ell^2g_n
	=(L\theta)^2\EE_{\ell,n}[(H-g_n)^3]$ is bounded by
	$8(L\theta)^2$. Thus
	\[
	\sum_{n\geq0}\sup_{\ell\in K}
	\left|\partial_\ell^j\bigl(p_n(\ell)g_n^2\bigr)\right|
	<\infty,\qquad j=0,1,2.
	\]
	This proves $\mathcal R\in C^2((0,\infty))$ and justifies the
	following differentiation and reindexing. Since
	$p_n'=L(p_{n-1}\bar q_{n-1}-p_n\bar q_n)$, with the first term
	omitted for $n=0$, \eqref{eq:posterioridentities} gives
	\begin{align*}
		\frac{\mathrm d}{\mathrm d\ell}\sum_{n\geq0}p_ng_n^2
		=L\sum_{n\geq0}p_n\bar q_n(g_{n+1}^2-g_n^2)
		-2L\theta\sum_{n\geq0}p_ng_nv_n=L\sum_{n\geq0}p_n\bar q_n(g_{n+1}-g_n)^2
		=L\theta^2\sum_{n\geq0}p_n\frac{v_n^2}{\bar q_n}.
	\end{align*}
	Each posterior has a strictly positive density relative to the
	nondegenerate law of $H$, so $v_n>0$.
	Together with $\bar q_n\geq1-\theta>0$, this proves
	\eqref{eq:latervaluederivative}.
	
	Finally, \eqref{eq:posterioridentities} and $v_n\leq1$ imply
	$|g_\ell(n)-g_0(n)|\leq L\theta|\ell-\lambda_0|$. Thus, 
	\[
	0\leq\mathcal C(\ell,g_0)-\mathcal R(\ell)
	=\frac D2\sum_{n\geq0}p_n(\ell)[g_0(n)-g_\ell(n)]^2
	\leq\frac{DL^2\theta^2}{2}(\ell-\lambda_0)^2.
	\]
\end{proof}

Suppose now that $\alpha=0$ on $[0,t_0)$.
Then $\PP_\kappa^u(Y_{t_0}=0)=1$. By the natural-filtration
assumption and $\PP_\kappa^u\ll\QQ$, $\mathcal F_t^Y$ is
$\PP_\kappa^u$-trivial for $t\leq t_0$.
Thus $v_t=v^{\mathrm d}(t)$
$\mathrm dt\otimes\mathrm d\PP_\kappa^u$-a.e., where
$v^{\mathrm d}(t)=\EE^{\PP_\kappa^u}v_t$.
It follows that $Z_{t_0}=\int_0^{t_0}w(t)v^{\mathrm d}(t)\dd t$
is deterministic under $\PP_\kappa^u$, so
$\mathcal V_R(\mu_{t_0}^u)=0$ and the middle multiplier equals
$\lambda_0$.

Therefore, for $H=\tanh W_{t_0}$ and $N=Y_{t_1}-Y_{t_0}$,  
$N\mid H\sim\operatorname{Poisson}(\lambda_0L(1+\theta H))$.
The middle-arrival likelihood depends on $H$ only through
$(1+\theta H)^Ne^{-\lambda_0L(1+\theta H)}$, and there are no
arrivals after $t_1$. Hence
$\EE^{\PP_\kappa^u}[H\mid\mathcal F_t^Y]=g_0(N)$ for
$t\in[t_1,T]$. Conditional completion of the square gives
\[
\EE^{\PP_\kappa^u}\int_{t_1}^T
\left(\frac12v_t^2-v_tH\right)\dd t
=\mathcal R(\lambda_0)
+\frac12\EE^{\PP_\kappa^u}\int_{t_1}^T[v_t-g_0(N)]^2\dd t.
\]
Since $W$ remains Brownian under $\PP_\kappa^u$,
$\EE^{\PP_\kappa^u}\int_0^{t_0}v_tS_t\dd t=0$. Therefore,
\[
J_\kappa(u)-\mathcal R(\lambda_0)
=\frac12\int_0^{t_0}|v^{\mathrm d}(t)|^2\dd t
+\frac12\EE^{\PP_\kappa^u}\int_{t_1}^T[v_t-g_0(N)]^2\dd t
\geq0.
\]
Obviously, the equality  holds for $\alpha=0$ and
$v_t=g_0(N)\ind_{\{t_1<t\leq T\}}$. Thus the full zero-activation
value is
\begin{equation}\label{eq:feedbackbaselinevalue}
	J_{\mathrm{base}}
	:=\inf_{\substack{u\in\mathcal U_{\mathrm{ad}}\\
			\alpha=0\text{ on }[0,t_0)}}J_\kappa(u)
	=\mathcal R(\lambda_0).
\end{equation}

\subsection{Activation and a nonzero distribution contribution}

Let $\Delta_{t_0}=\{(t,s):0\leq s\leq t\leq t_0\}$,
$\mathcal A=\{a:[0,t_0]\to[0,1]:a\text{ is Borel measurable}\}$
and $\Psi_V=C(\Delta_{t_0};[-V,V])$.
For $(a,\psi,g)\in\mathcal A\times\Psi_V\times\mathcal D_V$,
define, for $0\leq\varepsilon\leq1$,
\begin{equation}\label{eq:feedbackpolicy} 
		\alpha_t^{\varepsilon;a,\psi,g}
		=\varepsilon a(t)\ind_{\{t\leq t_0\}},\qquad
		v_t^{\varepsilon;a,\psi,g}
		=\psi(t,\tau)\ind_{\{\tau<t\leq t_0\}}
		+g(N)\ind_{\{t_1<t\leq T\}}, 
\end{equation}
where $\tau=\inf\{t:Y_t\geq1\}$ and $N=Y_{t_1}-Y_{t_0}$.
Each term is zero outside its indicated domain.
The action is adapted and left-continuous, while $\alpha$ is
deterministic and Borel measurable; hence these policies are admissible.

In \eqref{eq:feedbackpolicy}, the final action uses only the later count \(N\). This restriction isolates the contribution of variance feedback to the later decision cost by excluding direct use of the early observation history in that action.

Set $\bar u^\psi=u^{0;a,\psi,g_0}$, independently of $a$.
Its likelihood is
\[
\frac{\mathrm d\PP_\kappa^{\bar u^\psi}}{\mathrm d\QQ}
=e^{cT-\lambda_0L(1+\theta H)}
\left(\frac{\lambda_0(1+\theta H)}c\right)^N
\ind_{\{Y_{t_0}=0,\;Y_T-Y_{t_1}=0\}},
\]
which is independent of $\psi,\kappa$ and gives
$J_\kappa(\bar u^\psi)=J_{\mathrm{base}}$.
The event $\{\tau<t_0\}$ is baseline-null but has
$\QQ$-probability $1-e^{-ct_0}>0$, so different responses specify
different complete reference-space baselines.
For general $g$,
\[
J_\kappa(u^{0;a,\psi,g})
=J_{\mathrm{base}}
+\frac D2\sum_{n\geq0}p_n(\lambda_0)[g(n)-g_0(n)]^2.
\]
Therefore, by keeping $g=g_0$, we obtain the feasible variation
$u^{\varepsilon;a,\psi,g_0}=\bar u^\psi+\varepsilon h$ with
$h=(a\ind_{[0,t_0]},0)$.

Let $r(s)=\EE[W_s\tanh W_s]$ and
$z_\psi(s)=\int_s^{t_0}w(t)\psi(t,s)\dd t$, and set
\begin{equation}\label{eq:feedbackquantities}
	Q(a,\psi)=\int_0^{t_0}a(s)z_\psi(s)^2\dd s,\qquad
	G=-\mathcal R'(\lambda_0)>0.
\end{equation}
Then $Z_{t_0}=z_\psi(\tau)\ind_{\{\tau<t_0\}}$.
Gaussian integration by parts gives
$r\in C([0,t_0])$, $r(0)=0$ and
$r(s)=s\EE[\sech^2(W_s)]>0$ for $s>0$.
Symmetry and the Brownian martingale property also give
$\EE q(W_s)=1$ and $\EE[W_tq(W_s)]=\theta r(s)$ for
$s\leq t\leq t_0$.

\begin{proposition}\label{prop:feedbackactivation}
	Define
	\begin{equation}\label{eq:feedbackindex}
		\Gamma_{\kappa,\psi}(s)
		=k+\int_s^{t_0}
		\left(\frac12\psi(t,s)^2-\theta r(s)\psi(t,s)\right)\dd t
		-\kappa Gz_\psi(s)^2.
	\end{equation}
	Fix $\kappa$. As $\varepsilon\downarrow0$, the expansions
	\begin{align}
		J_\kappa(u^{\varepsilon;a,\psi,g_0})
		&=J_{\mathrm{base}}
		+\varepsilon\int_0^{t_0}a(s)\Gamma_{\kappa,\psi}(s)\dd s
		+O(\varepsilon^2),\label{eq:feedbackuniform}\\
		\inf_{g\in\mathcal D_V}J_\kappa(u^{\varepsilon;a,\psi,g})
		&=J_{\mathrm{base}}
		+\varepsilon\int_0^{t_0}a(s)\Gamma_{\kappa,\psi}(s)\dd s
		+O(\varepsilon^2).\label{eq:feedbackoptimizedlate}
	\end{align}
	hold uniformly for $(a,\psi)\in\mathcal A\times\Psi_V$.
	
	Take $h=(a\ind_{[0,t_0]},0)$. Then
	\begin{equation}\label{eq:feedbackdifference}
		DJ_\kappa(\bar u^\psi;h)-DJ_0(\bar u^\psi;h)
		=-\kappa GQ(a,\psi),
	\end{equation}
	where $DJ_0$ denotes the directional derivative for the case $\kappa=0$.
	
	The induced intensity variation satisfies
	\begin{equation}\label{eq:feedbackratevariation}
		D\lambda_\kappa(t)=\kappa q(S_{t_0})Q(a,\psi),
		\qquad t_0<t<t_1,
	\end{equation}
	which is non-zero whenever $\kappa\neq0$ and $Q(a,\psi)>0$.
\end{proposition}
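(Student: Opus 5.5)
The plan is to compute $J_\kappa(u^{\varepsilon;a,\psi,g})$ explicitly under the physical measure $\PP^\varepsilon:=\PP_\kappa^{u^{\varepsilon;a,\psi,g}}$ and expand each piece in $\varepsilon$. I would not go through the adjoint machinery. By Lemma~\ref{lem:girsanov}, $W$ is a Brownian motion under $\PP^\varepsilon$, and $Y$ has intensity $\varepsilon a(t)q(W_t)$ on $[0,t_0)$, intensity $\ell^\varepsilon q(W_{t_0})$ on $[t_0,t_1)$, and intensity $0$ afterwards. Here $\ell^\varepsilon=\lambda_0+\kappa\tanh\mathcal V_R(\mu^\varepsilon_{t_0})$ is deterministic.

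First I split the cost into three parts: the early cost, the activation fee $\varepsilon k\int_0^{t_0}a\,\drm s$, and the late cost. For the late part, observe that the late action uses only $N$. Integrating out the early arrival history therefore leaves the following joint law: $W_{t_0}$ is Gaussian and $N\mid W_{t_0}\sim\operatorname{Poisson}(\ell^\varepsilon Lq(W_{t_0}))$. This holds because the conditional law of $Y$ given $W$ is that of a Cox process, whose early likelihood integrates to one. Hence the late cost equals exactly $\mathcal C(\ell^\varepsilon,g)$, and its infimum over $g$ is $\mathcal R(\ell^\varepsilon)$.

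For the early part, the $\PP^\varepsilon$-density of $\tau$ on $[0,t_0)$, jointly with $W$, is
\[
\varepsilon a(s)q(W_s)\exp\Bigl(-\varepsilon\int_0^s a(r)q(W_r)\,\drm r\Bigr).
\]
Consequently the early control cost equals
\[
\varepsilon\int_0^{t_0}a(s)\,\EE\Bigl[q(W_s)\int_s^{t_0}\Bigl(\tfrac12\psi(t,s)^2-\psi(t,s)W_t\Bigr)\drm t\Bigr]\drm s+O(\varepsilon^2).
\]
The remainder is uniform in $(a,\psi)$ because $|a|\le1$, $|\psi|\le V$, $q\le1+\theta$, and $\EE\sup|W|<\infty$. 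Using $\EE q(W_s)=1$ and $\EE[W_tq(W_s)]=\theta r(s)$, this becomes $\varepsilon\int a(s)\int_s^{t_0}(\frac12\psi^2-\theta r\psi)\,\drm t\,\drm s$.

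Next I expand the variance. We have $Z_{t_0}=z_\psi(\tau)\ind_{\{\tau<t_0\}}$ with $|z_\psi|\le R$. The same density gives $\EE^{\PP^\varepsilon}[Z_{t_0}^2]=\varepsilon Q(a,\psi)+O(\varepsilon^2)$ and $\EE^{\PP^\varepsilon}Z_{t_0}=O(\varepsilon)$. Therefore, by \eqref{eq:actualvariance}, $\mathcal V_R(\mu^\varepsilon_{t_0})=\varepsilon Q+O(\varepsilon^2)$, and since $\tanh x=x+O(x^3)$, also $\ell^\varepsilon-\lambda_0=\kappa\varepsilon Q+O(\varepsilon^2)$, all uniformly. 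By Lemma~\ref{lem:latercost}, $\mathcal R\in C^2$ near $\lambda_0$ and $|\ell^\varepsilon-\lambda_0|\le|\kappa|$. This gives
\[
\mathcal R(\ell^\varepsilon)=\mathcal R(\lambda_0)-G\kappa\varepsilon Q+O(\varepsilon^2).
\]
Combined with \eqref{eq:laterdecisiongap}, $\mathcal C(\ell^\varepsilon,g_0)$ has the same expansion. Adding the three parts and using $J_{\mathrm{base}}=\mathcal R(\lambda_0)$ yields both \eqref{eq:feedbackuniform} and \eqref{eq:feedbackoptimizedlate}, since $-\kappa GQ=-\kappa G\int a z_\psi^2$ is exactly the last term of $\Gamma_{\kappa,\psi}$ integrated against $a$.

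Since $u^{\varepsilon;a,\psi,g_0}=\bar u^\psi+\varepsilon h$, the directional derivative $DJ_\kappa(\bar u^\psi;h)$, which equals $J_1(h)$ of Theorem~\ref{thm:costderivative}, is the $\varepsilon$-coefficient in \eqref{eq:feedbackuniform}. Only the $\kappa G z_\psi^2$ term depends on $\kappa$, which gives \eqref{eq:feedbackdifference}. For \eqref{eq:feedbackratevariation}, on $(t_0,t_1)$ we have
\[
\Delta_\varepsilon\lambda_\kappa(t)=q(S_{t_0})\,(\ell^\varepsilon-\lambda_0)/\varepsilon\to\kappa q(S_{t_0})Q
\]
uniformly. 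By \eqref{eq:exactremainder}, Proposition~\ref{prop:remainder} and Theorem~\ref{thm:differentiability}, $\Delta_\varepsilon\lambda_\kappa\to D\lambda_\kappa$ in $L^p$, so the two limits agree. Alternatively, one can check the formula directly from \eqref{eq:feedbackmeasurederivative} with $\mathcal V_R(\bar\mu_{t_0})=0$.

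I expect the main obstacle to be the rigorous identification of the late-cost law under $\PP^\varepsilon$. The task is to show that marginalizing over early observation histories leaves $W_{t_0}$ Gaussian with $N$ conditionally Poisson, even though early arrivals are correlated with $W$. I would first try the product formula \eqref{eq:product}, conditioning on $W$ and using independence of $Y$ and $W$ under $\QQ$. The next most delicate point is uniformity of the $O(\varepsilon^2)$ terms. I would handle it by bounding every remainder by $C\varepsilon^2$ with $C$ depending only on $V,R,\theta,t_0,|\kappa|$ and $\sup_{|\ell-\lambda_0|\le|\kappa|}|\mathcal R''(\ell)|$.
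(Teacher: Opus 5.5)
Your proposal is correct and follows essentially the same route as the paper. Both compute directly under $\PP^\varepsilon$ using the first-arrival density given $W$, derive uniform $O(\varepsilon^2)$ expansions of the physical variance and of $\ell^\varepsilon$, use the exact split of the cost into the early cost plus $\mathcal C(\ell^\varepsilon,g)$, and finish with a Taylor expansion of $\mathcal R$ on $[\lambda_0-|\kappa|,\lambda_0+|\kappa|]$ combined with \eqref{eq:laterdecisiongap}. The only variation is in \eqref{eq:feedbackratevariation}: the paper evaluates $D_\mu\lambda_\kappa$ from \eqref{eq:feedbackmeasurederivative}, using $\chi=0$ and $\EE[K_t\bar Z_{t_0}^2]=Q(a,\psi)$ (your stated alternative), whereas your primary argument identifies $D\lambda_\kappa$ as the $L^p$-limit of the explicit quotient $q(S_{t_0})(\ell^\varepsilon-\lambda_0)/\varepsilon$, and that argument is equally valid.
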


\begin{proof}
	Write $u^\varepsilon=u^{\varepsilon;a,\psi,g}$,
	$\PP_\varepsilon=\PP_\kappa^{u^\varepsilon}$,
	$\EE_\varepsilon=\EE^{\PP_\varepsilon}$ and
	$\mu^\varepsilon=\mu^{u^\varepsilon}$.
	Throughout, $0\leq\varepsilon\leq1$ and $\kappa$ is fixed.
	All constants below depend only on the fixed model parameters,
	not on $(a,\psi,g)$.
	The early action is fixed on the reference space, the state
	coefficients have no measure argument, and actions after $t_0$
	do not affect $X$. Hence $X^{u^\varepsilon}=\bar X^\psi$ and
	$\chi=0$.
	
	Under $\QQ$, the likelihood factors over the three intervals,
	with unit conditional means given $W$. The later factors also have
	unit conditional means given $W$ and the early history.
	Thus $W$ retains its Brownian law, the early observations are
	independent of $\kappa$ and $g$ in distribution, and
	the first arrival has density $
	\PP_\varepsilon(\tau\in\mathrm ds\mid W)
	=\varepsilon a(s)q(W_s)e^{-\varepsilon B_s^a}\dd s,$ with $ 
	B_s^a=\int_0^s a(r)q(W_r)\dd r,\quad 0<s<t_0.$
	
	Set $q_+=1+\theta$, $b_*=q_+t_0$ and
	$Z_*=V\int_0^{t_0}w(t)\dd t$.
	Then $0\leq B_s^a\leq b_*$,
	$|e^{-\varepsilon B_s^a}-1|\leq\varepsilon b_*$
	and $|z_\psi(s)|\leq Z_*$.
By using $\EE q(W_s)=1$, for $j=1,2$, $$
		\EE_\varepsilon Z_{t_0}^j
		=\varepsilon\int_0^{t_0}a(s)z_\psi(s)^j
		\EE[q(W_s)e^{-\varepsilon B_s^a}]\dd s,$$ and $$
		\left|\EE_\varepsilon Z_{t_0}^j
		-\varepsilon\int_0^{t_0}a(s)z_\psi(s)^j\dd s\right|
		\leq\varepsilon^2b_*t_0Z_*^j.$$
		
	Also, $|\EE_\varepsilon Z_{t_0}|\leq\varepsilon t_0Z_*$
	and $\EE_\varepsilon Z_{t_0}^2\leq\varepsilon t_0Z_*^2$.
	It follows from \eqref{eq:actualvariance} that for $t>t_0$,
	\begin{equation}\label{eq:varianceactivation}
		\mathcal V_R(\mu_t^\varepsilon)
		=\EE_\varepsilon Z_{t_0}^2-(\EE_\varepsilon Z_{t_0})^2
		=\varepsilon Q(a,\psi)+O(\varepsilon^2),
		\qquad t\geq t_0.
	\end{equation}
	For $\mathcal V^\varepsilon
	=\mathcal V_R(\mu_{t_0}^\varepsilon)$ and
	$C_V=(b_*t_0+t_0^2)Z_*^2$, these estimates give
	$|\mathcal V^\varepsilon-\varepsilon Q(a,\psi)|
	\leq C_V\varepsilon^2$ and
	$0\leq\mathcal V^\varepsilon\leq\varepsilon t_0Z_*^2$.
	The variance is deterministic and independent of $g$.
	Since $|\tanh x|\leq|x|$ and $|\tanh x-x|\leq|x|^3/3$,
	\begin{equation}\label{eq:latermultiplierexpansion}
		\ell^\varepsilon
		=\lambda_0+\kappa\tanh\mathcal V^\varepsilon
		=\lambda_0+\varepsilon\kappa Q(a,\psi)+O(\varepsilon^2),
	\end{equation}
	where $
	|\ell^\varepsilon-\lambda_0|
	\leq|\kappa|t_0Z_*^2\varepsilon$, and $
	|\ell^\varepsilon-\lambda_0-\varepsilon\kappa Q(a,\psi)|
	\leq C_\ell\varepsilon^2,$ 
	with $C_\ell=|\kappa|\bigl(C_V+(t_0Z_*^2)^3/3\bigr)$.
	
	For the early cost, the same density gives
	\begin{align*}
		\mathcal E^\varepsilon
		&:=\EE_\varepsilon\int_0^{t_0}
		\left(\frac12v_t^2-v_tW_t+k\alpha_t\right)\dd t\\
		&=\varepsilon k\int_0^{t_0}a(s)\dd s
		+\varepsilon\int_0^{t_0}a(s)\int_s^{t_0}
		\EE\left[
		q(W_s)e^{-\varepsilon B_s^a}
		\left(\frac12\psi(t,s)^2-\psi(t,s)W_t\right)
		\right]\dd t\,\dd s\\
		&=\varepsilon\int_0^{t_0}a(s)
		\left[
		k+\int_s^{t_0}
		\left(\frac12\psi(t,s)^2-\theta r(s)\psi(t,s)\right)\dd t
		\right]\dd s+O(\varepsilon^2).
	\end{align*}
 Since  the survival factor introduces at most
 $C_{\mathrm e}\varepsilon^2$ with
 $C_{\mathrm e}=b_*q_+t_0^2
 \bigl(V^2/2+V\sqrt{2t_0/\pi}\bigr)/2$, then by using 	Fubini's theorem and the uniform remainder, we have 
	\[
	\int_0^{t_0}\int_s^{t_0}
	\EE\left[
	q(W_s)\left(\frac{V^2}{2}+V|W_t|\right)
	\right]\dd t\,\dd s
	\leq q_+\frac{t_0^2}{2}
	\left(\frac{V^2}{2}+V\sqrt{\frac{2t_0}{\pi}}\right)<\infty.
	\]

	For the middle interval, the Poisson likelihood gives, for
	$0\leq z\leq1$,
	\[
	\EE_\varepsilon[z^N\mid W,\mathcal F_{t_0}^Y]
	=\exp\{\ell^\varepsilon L(1+\theta H)(z-1)\}.
	\]
	Together with the unit conditional means of the likelihood factors,
	this shows that $H$ retains its Brownian law and
	$N\mid H\sim\operatorname{Poisson}(\ell^\varepsilon L(1+\theta H))$.
	Both $\mathcal E^\varepsilon$ and $\ell^\varepsilon$ are independent
	of $g$. Thus  $
	J_\kappa(u^{\varepsilon;a,\psi,g})
	=\mathcal E^\varepsilon+\mathcal C(\ell^\varepsilon,g),$ and $ 
	\inf_{g\in\mathcal D_V}J_\kappa(u^{\varepsilon;a,\psi,g})
	=\mathcal E^\varepsilon+\mathcal R(\ell^\varepsilon).$
	
	Moreover, every $\ell^\varepsilon$ belongs to
	$I_\kappa=[\lambda_0-|\kappa|,\lambda_0+|\kappa|]\subset(0,\infty)$.
	By Lemma~\ref{lem:latercost},
	$M_\kappa=\sup_{\ell\in I_\kappa}|\mathcal R''(\ell)|<\infty$.
Then, by using	Taylor's formula and \eqref{eq:laterdecisiongap}, $$
		|\mathcal R(\ell^\varepsilon)-J_{\mathrm{base}}
		+G(\ell^\varepsilon-\lambda_0)|
		\leq\frac{M_\kappa}{2}|\ell^\varepsilon-\lambda_0|^2,$$
		and $$
		0\leq\mathcal C(\ell^\varepsilon,g_0)
		-\mathcal R(\ell^\varepsilon)
		\leq\frac{DL^2\theta^2}{2}|\ell^\varepsilon-\lambda_0|^2.$$ 
	In particular,
	\[
	|\mathcal R(\ell^\varepsilon)-J_{\mathrm{base}}
	+\varepsilon\kappa GQ(a,\psi)|
	\leq
	\left[
	GC_\ell+\frac{M_\kappa}{2}(|\kappa|t_0Z_*^2)^2
	\right]\varepsilon^2.
	\]
	Combining this with the early-cost estimate proves
	\eqref{eq:feedbackuniform}--\eqref{eq:feedbackoptimizedlate},
	with both remainders bounded by $C\varepsilon^2$, where
	\[
	C=C_{\mathrm e}+GC_\ell
	+\frac{M_\kappa+DL^2\theta^2}{2}
	(|\kappa|t_0Z_*^2)^2.
	\]
	The same $C$ works for all $(a,\psi)$ and
	$0\leq\varepsilon\leq1$; no uniform expansion for arbitrary $g$
	is needed. Since $\Gamma_{\kappa,\psi}-\Gamma_{0,\psi}
	=-\kappa Gz_\psi^2$, \eqref{eq:feedbackdifference} follows.
	
	Finally, $\bar Z_{t_0}=0$ under the baseline physical probability,
	so $(\mathcal V_R)_\mu(\bar\mu_t;y)=\phi_R(y_z)$ for $t\geq t_0$.
Based on 	Theorem~\ref{thm:differentiability}, $\chi=0$ and
	\eqref{eq:varianceactivation} , we have 
	\[
	\left.\frac{\mathrm d}{\mathrm d\varepsilon}
	\mathcal V_R(\mu_{t_0}^\varepsilon)\right|_{0+}
	=\EE[K_{t_0}(\bar Z_{t_0})^2]=Q(a,\psi).
	\]
	Because $K$ is a square-integrable martingale and
	$\bar Z_{t_0}$ is bounded and $\mathcal F_{t_0}$-measurable,
	$\EE[K_t(\bar Z_{t_0})^2]=Q(a,\psi)$ for $t\geq t_0$.
	Substitution into \eqref{eq:feedbackmeasurederivative} proves
	\eqref{eq:feedbackratevariation}.
\end{proof}

Although $\PP_\kappa^{\bar u^\psi}=\PP_0^{\bar u^\psi}$ and
$J_\kappa(\bar u^\psi)=J_0(\bar u^\psi)=J_{\mathrm{base}}$,
\eqref{eq:feedbackdifference} shows that positive $\kappa$ lowers
the activation derivative and negative $\kappa$ raises it whenever
$Q(a,\psi)>0$.

For the boundary decomposition, let
$E_{\mathrm e}=\{Y_{t_0}\geq1\}$,
$E_{\mathrm f}=\{Y_T-Y_{t_1}\geq1\}$ and
$\PP_\varepsilon=\PP_\kappa^{u^{\varepsilon;a,\psi,g_0}}$.
The strictly positive middle intensity gives
$\{\bar M_T>0\}=(E_{\mathrm e}\cup E_{\mathrm f})^c$
up to $\QQ$-null sets, whereas
$\PP_\varepsilon(E_{\mathrm f})=0$ and
\[
\PP_\varepsilon(E_{\mathrm e})
=\EE[1-e^{-\varepsilon B_{t_0}^a}]
=\varepsilon\int_0^{t_0}a(s)\dd s+O(\varepsilon^2).
\]
Proposition~\ref{prop:singular} therefore gives
$\nu^{\perp,a}(\Omega)=\int_0^{t_0}a(s)\dd s$, independently of
$\psi,\kappa$, while the cost derivative depends on both.
Theorem~\ref{thm:costderivative} and Proposition~\ref{prop:duality}
identify the same derivative as
\[
DJ_\kappa(\bar u^\psi;h)
=\EE\int_0^{t_0}\widehat H_\alpha^{\kappa,\psi}(t)a(t)\dd t
=\int_0^{t_0}a(s)\Gamma_{\kappa,\psi}(s)\dd s.
\]

\subsection{Activation gradient and the maximum principle}

Fix $\psi\in\Psi_V$ and the complete baseline policy $\bar u^\psi$.
Let $H^{\kappa,\psi}$ be the Hamiltonian evaluated along this baseline
and its adjoint processes from Proposition~\ref{prop:adjoint}, and let
$\widehat H_\alpha^{\kappa,\psi}$ denote the $\alpha$-component of
the observation-predictable projection \eqref{eq:projection}.

\begin{proposition}\label{prop:activationgradient}
	For $\mathrm ds\otimes\mathrm d\QQ$-almost every
	$(s,\omega)\in(0,t_0)\times\Omega$,
	\begin{equation}\label{eq:activationgradient}
		\widehat H_\alpha^{\kappa,\psi}(s)
		=e^{cs}\ind_{\{Y_{s-}=0\}}\Gamma_{\kappa,\psi}(s).
	\end{equation}
	In particular, if $\bar u^\psi$ is optimal over $\Uad$, then
	\begin{equation}\label{eq:activationnecessary}
		\Gamma_{\kappa,\psi}(s)\geq0
		\quad\text{for Lebesgue-almost every }s\in(0,t_0).
	\end{equation}
\end{proposition}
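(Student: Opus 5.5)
The plan is to compute $\widehat H_\alpha^{\kappa,\psi}$ in two stages. First we show that it vanishes off $\{Y_{s-}=0\}$ and has a deterministic profile on that set. Then we identify the profile by testing the duality formula of Proposition~\ref{prop:duality} against deterministic activation directions, whose derivative is already known from Proposition~\ref{prop:feedbackactivation}.

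\emph{Step 1: structure of $H_\alpha$.} From the definition of $H$, the $\alpha$-component along the baseline is
\[
H_\alpha^{\kappa,\psi}(s)=\bar M_{s-}\bigl(k+\ell_s\,q(S_{s})\bigr),\qquad 0<s<t_0,
\]
because $f_\alpha=k$, $\lambda_{\kappa,\alpha}=q(s)$ on $[0,t_0)$, and $b,\sigma$ do not depend on $\alpha$. Along $\bar u^\psi$ the early intensity vanishes. Hence, by \eqref{eq:product} with $n=1$, $\bar M_{s}=e^{cs}\ind_{\{Y_{s}=0\}}$ for $s\le t_0$. It follows that $H_\alpha^{\kappa,\psi}(s)=0$ on $\{Y_{s-}\ge1\}$, and that
\[
\widehat H_\alpha^{\kappa,\psi}(s)=e^{cs}\ind_{\{Y_{s-}=0\}}\,\EE^{\mathrm dt\otimes\QQ}\bigl[k+\ell_s q(S_s)\mid\mathcal P^Y\bigr]
\]
on $(0,t_0)$, since the factor $e^{cs}\ind_{\{Y_{s-}=0\}}$ is $\mathcal P^Y$-measurable. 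We then invoke the standard description of predictable processes for the (augmented) natural filtration of a counting process: on the stochastic interval $\{s\le\tau\}=\{Y_{s-}=0\}$, every $\mathcal P^Y$-measurable process coincides $\mathrm ds\otimes\QQ$-a.e.\ with a deterministic Borel function of $s$. Consequently $\widehat H_\alpha^{\kappa,\psi}(s)=e^{cs}\ind_{\{Y_{s-}=0\}}\gamma(s)$ for some deterministic $\gamma$. This $\gamma$ is locally integrable, because $H_u\in L^2(\mathrm dt\otimes\mathrm d\QQ)$ by Proposition~\ref{prop:adjoint} and $\QQ(Y_{s-}=0)=e^{-cs}$ is bounded below on $[0,t_0]$.

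\emph{Step 2: identification $\gamma=\Gamma_{\kappa,\psi}$.} Take any $a\in\mathcal A$ and $h=(a\ind_{[0,t_0]},0)$; this direction is deterministic and hence $\FF^Y$-predictable. Theorem~\ref{thm:costderivative} and Proposition~\ref{prop:duality}, combined with \eqref{eq:projectionduality}, give
\[
J_1(h)=\EE\int_0^{t_0}\widehat H_\alpha^{\kappa,\psi}(s)a(s)\,\mathrm ds=\int_0^{t_0}a(s)\gamma(s)\,\EE[e^{cs}\ind_{\{Y_{s-}=0\}}]\,\mathrm ds=\int_0^{t_0}a(s)\gamma(s)\,\mathrm ds .
\]
On the other hand, \eqref{eq:feedbackuniform} shows that the same one-sided derivative equals $\int_0^{t_0}a(s)\Gamma_{\kappa,\psi}(s)\,\mathrm ds$. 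Since $a$ ranges over all Borel $[0,1]$-valued functions, in particular indicators of intervals, we conclude $\gamma=\Gamma_{\kappa,\psi}$ Lebesgue-a.e. This proves \eqref{eq:activationgradient}.

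\emph{Step 3: necessary condition.} If $\bar u^\psi$ is optimal, apply Theorem~\ref{thm:SMP} with $w=(1,\bar v_t)$, where $\bar v_t$ is the baseline action; the required version of this choice is obtained by using a countable dense set, as in the proof of that theorem. Because $\bar\alpha=0$, this gives $\widehat H_\alpha^{\kappa,\psi}(s)\ge0$ for $\mathrm ds\otimes\QQ$-a.e.\ $(s,\omega)$. Since $\QQ(Y_{s-}=0)=e^{-cs}>0$ for every $s$, Fubini's theorem and \eqref{eq:activationgradient} yield $\Gamma_{\kappa,\psi}(s)\ge0$ for a.e.\ $s\in(0,t_0)$, which is \eqref{eq:activationnecessary}.

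The main obstacle is Step 1: we must justify rigorously that $\mathcal P^Y$ restricted to $\{Y_{s-}=0\}$ is generated by deterministic time sets, given the $\QQ$-augmentation of the filtration. I would first prove this on predictable rectangles $(r,t]\times F$ with $F\in\mathcal F^Y_r$. For such $F$, the set $F\cap\{Y_r=0\}$ is either $\{Y_r=0\}$ or $\emptyset$, up to a null set, because $\mathcal F^Y_r\cap\{Y_r=0\}$ is trivial. A monotone class argument then extends this to all of $\mathcal P^Y$, and null sets are absorbed by the $\mathrm ds\otimes\QQ$ equivalence. A secondary check is that the duality and expansion pair the same derivative; this holds because Theorem~\ref{thm:costderivative} identifies $J_1(h)$ with the one-sided derivative computed in Proposition~\ref{prop:feedbackactivation}.
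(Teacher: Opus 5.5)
Your proposal is correct and follows essentially the paper's own proof. Like the paper, you localize $H_\alpha^{\kappa,\psi}=\bar M_{s-}\bigl(k+\ell_s q(S_s)\bigr)$ via $\bar M_{s-}=e^{cs}\ind_{\{Y_{s-}=0\}}$, reduce the projection on $\{Y_{s-}=0\}$ to a deterministic profile, identify that profile by testing Proposition~\ref{prop:duality} against deterministic directions $a$ and comparing with \eqref{eq:feedbackuniform}, and read off \eqref{eq:activationnecessary} from the $\alpha$-component of \eqref{eq:normalineq} at $\bar\alpha=0$; the paper only differs by deriving the mean identity $\EE[\widehat H_\alpha^{\kappa,\psi}(s)]=\Gamma_{\kappa,\psi}(s)$ before the structural step, and your rectangle-plus-monotone-class argument is a somewhat more careful version of its appeal to atoms and sections of $\mathcal F^Y_{s-}$ (your countable-dense-set remark is unnecessary, since Theorem~\ref{thm:SMP} already holds for all $w\in U$ outside one null set).
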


\begin{proof}
	Proposition~\ref{prop:adjoint} gives
	$\widehat H_\alpha^{\kappa,\psi}\in
	L^2((0,t_0)\times\Omega,\mathrm ds\otimes\mathrm d\QQ)$.
	The kernel $\Gamma_{\kappa,\psi}$ is continuous and bounded:
	$r,w,\psi$ are continuous on their compact domains, and the
	integrals in \eqref{eq:feedbackindex} preserve continuity.
	For every $a\in\mathcal A$, the policy
	$u^{\varepsilon;a,\psi,g_0}=\bar u^\psi+\varepsilon
	(a\ind_{[0,t_0]},0)$ is a feasible convex variation.
	Propositions~\ref{prop:duality} and~\ref{prop:feedbackactivation},
	together with \eqref{eq:projectionduality}, yield
	\[
	\int_0^{t_0}a(s)
	\left(\EE\bigl[\widehat H_\alpha^{\kappa,\psi}(s)\bigr]
	-\Gamma_{\kappa,\psi}(s)\right)\dd s=0.
	\]
	Fubini's theorem applies by the preceding integrability bounds.
	Taking $a$ to be indicators of Borel subsets of $(0,t_0)$ gives
	\begin{equation}\label{eq:activationmean}
		\EE\bigl[\widehat H_\alpha^{\kappa,\psi}(s)\bigr]
		=\Gamma_{\kappa,\psi}(s)
		\quad\text{for Lebesgue-almost every }s\in(0,t_0).
	\end{equation}
	
	On the early interval, $\bar\alpha=0$ and hence $\bar\lambda_\kappa=0$.
	The likelihood formula \eqref{eq:product} therefore gives
	\[
	\bar M_{s-}=e^{cs}\ind_{\{Y_{s-}=0\}}.
	\]
	The state coefficients are independent of $\alpha$, and
	$f_\alpha=k$, $(\lambda_\kappa)_\alpha=q(S_s)$ for $0<s<t_0$.
Therefore,
	\[
	H_\alpha^{\kappa,\psi}(s)
	=\bar M_{s-}\left[k+\ell_s^{\kappa,\psi}q(S_s)\right],
	\]
	where $\ell^{\kappa,\psi}$ is the counting-martingale integrand of
	the scalar adjoint in \eqref{eq:scalaradjoint}. Since $q$ and $\bar M$
	are bounded and $\ell^{\kappa,\psi}\in\mathcal J_Y^2$, then 
	this expression is square integrable.
Moreover,  note that 	the set $\{(s,\omega):Y_{s-}(\omega)=0\}$ belongs to
	$\mathcal P^Y$, thus projection preserves the vanishing of
	$H_\alpha^{\kappa,\psi}$ on $\{Y_{s-}>0\}$.
	
	For each deterministic $s>0$, the event $\{Y_{s-}=0\}$ is an atom
	of the observation-history $\sigma$-field $\mathcal F_{s-}^Y$,
	up to $\QQ$-null sets: every counting history in this event is
	identically zero before $s$. The usual augmentation only adds
	null sets to this statement. Since a predictable process has
	$\mathcal F_{s-}^Y$-measurable sections, there is a deterministic
	measurable function $\gamma$ such that $
	\widehat H_\alpha^{\kappa,\psi}(s)
	=\ind_{\{Y_{s-}=0\}}\gamma(s)$,  $\mathrm ds\otimes\mathrm d\QQ$-a.e. 
	Here a measurable version of $\gamma$ can be chosen as
	$e^{cs}\EE[\widehat H_\alpha^{\kappa,\psi}(s)]$.
	Under $\QQ$, $Y$ has constant intensity $c$, so
	$\QQ(Y_{s-}=0)=e^{-cs}$. Combining this with
	\eqref{eq:activationmean} gives
	$\gamma(s)=e^{cs}\Gamma_{\kappa,\psi}(s)$ almost everywhere,
	which proves \eqref{eq:activationgradient}.
	Its right-hand side is predictable and bounded on the finite early
	interval, so it specifies a predictable, square-integrable version of the projection.
	
	If $\bar u^\psi$ is optimal, the $\alpha$-component of
	\eqref{eq:normalineq} at $\bar\alpha=0$ requires
	$\widehat H_\alpha^{\kappa,\psi}\geq0$.
	Since $\QQ(Y_{s-}=0)=e^{-cs}>0$, by using Fubini's theorem and
	\eqref{eq:activationgradient}, \eqref{eq:activationnecessary} holds.
\end{proof}

Thus $\Gamma_{\kappa,\psi}$ gives the sign of the projected
activation gradient along histories with no early arrival.
If it is negative on a set of positive Lebesgue measure,
choosing $a=\ind_{\{\Gamma_{\kappa,\psi}<0\}}$ in
\eqref{eq:feedbackuniform} produces a strict cost decrease for all
sufficiently small $\varepsilon>0$.
The factor $e^{cs}$ in \eqref{eq:activationgradient} compensates
for the reference probability $e^{-cs}$ of a no-arrival history,
so the resulting directional cost is independent of the dominating
reference rate, consistently with Proposition~\ref{prop:referenceinvariance}.

\subsection{An explicit comparison of activation thresholds}

Set $A_s=\int_s^{t_0}w(t)^2\dd t$,
$b_s=\int_s^{t_0}w(t)\dd t$ and $d_\kappa(s)=1-2\kappa GA_s$.
Assume
\begin{equation}\label{eq:feedbackcoercivity}
	2\kappa GA_0<1.
\end{equation}
Then $d_\kappa(s)\geq1-2\kappa GA_0>0$ for $\kappa\geq0$,
and $d_\kappa(s)\geq1$ for $\kappa<0$.
Define
\begin{align}\label{eq:feedbackoptimalresponse}
	\psi_\kappa^*(t,s)=\theta r(s)\left[1+\frac{2\kappa Gb_sw(t)}{d_\kappa(s)}\right],\qquad
	B_\kappa(s)
	=\frac{\theta^2r(s)^2}{2}
	\left[t_0-s+\frac{2\kappa Gb_s^2}{d_\kappa(s)}\right]. 
\end{align}
To make both $\psi_\kappa^*$ and $\psi_0^*(t,s)=\theta r(s)$
feasible, assume also
\begin{equation}\label{eq:feedbacknonbinding}
	V>\max\{1,\|\psi_\kappa^*\|_{C(\Delta_{t_0})},
	\theta\|r\|_{C([0,t_0])}\}.
\end{equation}
Since $|g_\ell|\leq1$, $G$ is independent of $V\geq1$.
One may therefore choose $\kappa$ satisfying the model bounds and
\eqref{eq:feedbackcoercivity}, then $V$ satisfying
\eqref{eq:feedbacknonbinding}, and finally
$R>V\int_0^{t_0}w(t)\dd t$.
These additional conditions are used only for the explicit optimizer.

\begin{corollary}\label{cor:feedbackthreshold}
Under \eqref{eq:feedbackcoercivity}--\eqref{eq:feedbacknonbinding},
$\psi_\kappa^*\in\Psi_V$ minimizes the first-order continuation
cost at each arrival time, and
\begin{equation}\label{eq:feedbackfamilyvalue}
	\inf_{a\in\mathcal A,\,\psi\in\Psi_V,\,g\in\mathcal D_V}
	J_\kappa(u^{\varepsilon;a,\psi,g})
	=J_{\mathrm{base}}
	-\varepsilon\int_0^{t_0}(B_\kappa(s)-k)^+\dd s
	+O(\varepsilon^2).
\end{equation}
This first-order coefficient is achieved by taking $\psi=\psi_\kappa^*$
and $a_\kappa^*(s)=\ind_{\{B_\kappa(s)>k\}}$.
Moreover,
\begin{equation}\label{eq:feedbackstrictbenefit}
	B_\kappa(s)-B_0(s)
	=\frac{\kappa G\theta^2r(s)^2b_s^2}{1-2\kappa GA_s},
	\qquad
	B_0(s)=\frac{\theta^2r(s)^2}{2}(t_0-s).
\end{equation}
Let $k_\kappa^*=\max_{s\in[0,t_0]}B_\kappa(s)$.
The optimized first-order coefficient is strictly negative whenever $0\leq k<k_\kappa^*$,
and vanishes for $k\geq k_\kappa^*$.
In addition, $k_\kappa^*>k_0^*$ if $\kappa>0$,
while $k_\kappa^*<k_0^*$ whenever $\kappa<0$.

\end{corollary}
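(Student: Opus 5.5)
The plan is to reduce everything to the uniform expansion of Proposition~\ref{prop:feedbackactivation} and then optimize the first-order coefficient pointwise in the arrival time $s$. By \eqref{eq:feedbackoptimizedlate}, uniformly in $(a,\psi)$,
\[
\inf_{g}J_\kappa(u^{\varepsilon;a,\psi,g})=J_{\mathrm{base}}+\varepsilon\int_0^{t_0}a(s)\Gamma_{\kappa,\psi}(s)\dd s+O(\varepsilon^2).
\]
Because the $O(\varepsilon^2)$ constant does not depend on $(a,\psi)$, the infimum over $(a,\psi)$ commutes with the expansion up to $O(\varepsilon^2)$. Consequently \eqref{eq:feedbackfamilyvalue} follows once I show
\[
\inf_{a,\psi}\int_0^{t_0}a\Gamma_{\kappa,\psi}\dd s=-\int_0^{t_0}(B_\kappa-k)^+\dd s.
\]

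\textbf{Pointwise minimization.} Fix $s$. Then $\Gamma_{\kappa,\psi}(s)-k$ is a functional of the section $x(t)=\psi(t,s)$ on $[s,t_0]$:
\[
F_s(x)=\int_s^{t_0}\Bigl(\tfrac12x^2-\theta r(s)x\Bigr)\dd t-\kappa G\Bigl(\int_s^{t_0}w x\dd t\Bigr)^2.
\]
This is a quadratic form on $L^2(s,t_0)$. The coercivity condition \eqref{eq:feedbackcoercivity} makes it strictly convex: by Cauchy--Schwarz, $(\int wx)^2\le A_s\int x^2$, so the quadratic part is at least $\tfrac12 d_\kappa(s)\int x^2>0$ when $\kappa\ge0$, and it is trivially positive when $\kappa<0$.

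The first-order condition is $x-\theta r(s)-2\kappa G w\,\langle w,x\rangle=0$, so $x=\theta r(s)+c\,w$ for a scalar $c$. Substituting gives $c=2\kappa G\theta r(s)b_s/d_\kappa(s)$, which is exactly $\psi^*_\kappa$. At this minimizer the value is $F_s=-\tfrac12\theta r(s)\int x$, which evaluates to $-B_\kappa(s)$. I will verify this algebra, including the identity $\langle w,x\rangle=\theta r b_s/d_\kappa$.

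Two points remain. First, $\psi^*_\kappa$ is continuous on $\Delta_{t_0}$, because $r$, $w$, $b_s$ and $d_\kappa$ are continuous. By the nonbinding condition \eqref{eq:feedbacknonbinding} it lies in $\Psi_V$, so the unconstrained $L^2$ minimizer is feasible. Second, since every continuous section is admissible, $\inf_\psi\Gamma_{\kappa,\psi}(s)=k-B_\kappa(s)$ holds for each $s$ simultaneously, attained by the single $\psi^*_\kappa$.

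\textbf{Optimizing $a$ and the comparison formula.} With $\psi=\psi^*_\kappa$, the integral $\int a(k-B_\kappa)\dd s$ over $a\in[0,1]$ is minimized by $a^*_\kappa=\ind_{\{B_\kappa>k\}}$, with value $-\int(B_\kappa-k)^+\dd s$. No other choice of $(a,\psi)$ does better, since for every $\psi$ and $a\ge0$, $\int a\Gamma_{\kappa,\psi}\ge\int a(k-B_\kappa)$. The formula \eqref{eq:feedbackstrictbenefit} is direct subtraction: $B_\kappa-B_0=\theta^2r^2\kappa Gb_s^2/d_\kappa$.

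\textbf{Thresholds.} $B_\kappa$ is continuous on $[0,t_0]$ and $B_\kappa(s)>0$ for $s\in(0,t_0)$. This holds because $r>0$ there, and $t_0-s+2\kappa Gb_s^2/d_\kappa\ge t_0-s-2|\kappa|Gb_s^2$ stays positive. For $\kappa\ge0$ this is clear. For $\kappa<0$, use $b_s^2\le(t_0-s)A_s$ and $d_\kappa\ge1+2|\kappa|GA_s$, which give $2|\kappa|Gb_s^2/d_\kappa<t_0-s$. Hence $k^*_\kappa>0$ is attained.

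If $k<k^*_\kappa$, then by continuity $B_\kappa>k$ on an interval, so the coefficient is strictly negative. If $k\ge k^*_\kappa$, the positive part vanishes and the coefficient is zero.

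For the comparison, let $s_0$ maximize $B_0$, with $s_0\in(0,t_0)$. There $r(s_0)>0$ and $b_{s_0}>0$. For $\kappa>0$ this gives $k^*_\kappa\ge B_\kappa(s_0)>B_0(s_0)=k^*_0$. For $\kappa<0$, let $s_1$ maximize $B_\kappa$; then $s_1\in(0,t_0)$ and $k^*_\kappa=B_\kappa(s_1)<B_0(s_1)\le k^*_0$.

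\textbf{Main obstacle.} The delicate step is the interchange of $\inf_{a,\psi}$ with the $O(\varepsilon^2)$ remainder. This is exactly why the uniformity in Proposition~\ref{prop:feedbackactivation} is needed, so I will cite it explicitly. The other point needing care is that the pointwise minimizer over sections assembles into one element of $\Psi_V$; continuity of $\psi^*_\kappa$ handles this.
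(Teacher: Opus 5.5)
Your proposal is correct and follows essentially the paper's argument. It uses the same stationarity-plus-coercivity minimization of $F_s$ on $L^2(s,t_0)$ giving $\inf_\psi\Gamma_{\kappa,\psi}(s)=k-B_\kappa(s)$, the same choice $a_\kappa^*=\ind_{\{B_\kappa>k\}}$, and the same interior-maximizer comparison of $B_\kappa$ and $B_0$ (with $b_s^2\le(t_0-s)A_s$ for $\kappa<0$). The one difference is minor: you pass the infimum through the uniform remainder of \eqref{eq:feedbackoptimizedlate} alone, whereas the paper pairs that lower bound with \eqref{eq:feedbackuniform} at $(a_\kappa^*,\psi_\kappa^*,g_0)$, which also shows the baseline final decision $g_0$ already attains the first-order value. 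Note also that your displayed bound $t_0-s-2|\kappa|Gb_s^2$ need not be positive, but the argument dividing by $d_\kappa$ that follows it is correct.
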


\begin{proof}
	Fix $s<t_0$ and define, on $L^2(s,t_0)$,
	\[
	F_s(\psi)
	=\frac12\int_s^{t_0}\psi(t)^2\dd t
	-\theta r(s)\int_s^{t_0}\psi(t)\dd t
	-\kappa G
	\left(\int_s^{t_0}w(t)\psi(t)\dd t\right)^2.
	\]
	Let $c_s=\int_s^{t_0}w(t)\psi(t)\dd t$, its derivative is
	\[
	DF_s(\psi)[\varphi]
	=\int_s^{t_0}
	[\psi(t)-\theta r(s)-2\kappa Gw(t)c_s]\varphi(t)\dd t.
	\]
Then, from the 	stationarity for every $\varphi\in L^2(s,t_0)$, we have 
	$\psi(t)=\theta r(s)+2\kappa Gw(t)c_s$ a.e.
	Multiplying by $w$ and integrating yields
	$d_\kappa(s)c_s=\theta r(s)b_s$.
	Since $d_\kappa(s)>0$, this gives
	$c_s=\theta r(s)b_s/d_\kappa(s)$ and the candidate
	\eqref{eq:feedbackoptimalresponse}.
	Conversely, this candidate has weighted integral
	$\theta r(s)b_s/d_\kappa(s)$ and then satisfies
	$DF_s(\psi_\kappa^*)[\varphi]=0$ for every $\varphi$.
	
	For any $\psi$, set $\eta=\psi-\psi_\kappa^*(\cdot,s)$.
By using Cauchy--Schwarz  inequality, we have 
	\[
	F_s(\psi)-F_s(\psi_\kappa^*)
	=\frac12\|\eta\|_2^2
	-\kappa G
	\left(\int_s^{t_0}w(t)\eta(t)\dd t\right)^2
	\geq
	\begin{cases}
		\frac12d_\kappa(s)\|\eta\|_2^2,&\kappa\geq0,\\
		\frac12\|\eta\|_2^2,&\kappa<0.
	\end{cases}
	\]
	Thus the candidate is the unique global minimizer.
	Taking $\varphi=\psi_\kappa^*$ in the stationarity identity, we have 
	\[
	F_s(\psi_\kappa^*)
	=-\frac{\theta r(s)}2
	\int_s^{t_0}\psi_\kappa^*(t,s)\dd t
	=-\frac{\theta^2r(s)^2}{2}
	\left[t_0-s+\frac{2\kappa Gb_s^2}{d_\kappa(s)}\right]
	=-B_\kappa(s).
	\]
	Since $\inf_s d_\kappa(s)>0$, the formula for $\psi_\kappa^*$
	is continuous on $\Delta_{t_0}$, and
	\eqref{eq:feedbacknonbinding} ensures $\psi_\kappa^*\in\Psi_V$.
	At $s=t_0$ the continuation cost is zero.
	
	It follows that, for every admissible $(a,\psi)$,
	\[
	\int_0^{t_0}a(s)\Gamma_{\kappa,\psi}(s)\dd s
	\geq\int_0^{t_0}a(s)(k-B_\kappa(s))\dd s
	\geq-\int_0^{t_0}(B_\kappa(s)-k)^+\dd s.
	\]
Since  $B_\kappa$ is continuous, then  $a_\kappa^*$ Borel measurable,
	and equality holds at $(a_\kappa^*,\psi_\kappa^*)$.
	
	Let $m_\kappa(k)=-\int_0^{t_0}(B_\kappa(s)-k)^+\dd s$.
	For every $(a,\psi,g)$, \eqref{eq:feedbackoptimizedlate} gives
	\[
	J_\kappa(u^{\varepsilon;a,\psi,g})
	\geq\inf_{\widetilde g\in\mathcal D_V}
	J_\kappa(u^{\varepsilon;a,\psi,\widetilde g})
	\geq J_{\mathrm{base}}
	+\varepsilon m_\kappa(k)-C\varepsilon^2.
	\]
	Here $C$ is independent of $(a,\psi,g)$ and $\varepsilon$.
	Taking the infimum gives the lower bound.
	Applying \eqref{eq:feedbackuniform} to the feasible policy
	$(a_\kappa^*,\psi_\kappa^*,g_0)$ gives the upper bound, so
	\begin{align*}
		J_{\mathrm{base}}
		-\varepsilon\int_0^{t_0}(B_\kappa(s)-k)^+\dd s
		-C\varepsilon^2
		&\leq
		\inf_{a,\psi,g}J_\kappa(u^{\varepsilon;a,\psi,g})\\
		&\leq
		J_\kappa(u^{\varepsilon;a_\kappa^*,\psi_\kappa^*,g_0})\\
		&\leq
		J_{\mathrm{base}}
		-\varepsilon\int_0^{t_0}(B_\kappa(s)-k)^+\dd s
		+C\varepsilon^2.
	\end{align*}
	This proves \eqref{eq:feedbackfamilyvalue}.
	The inequalities hold for every $0<\varepsilon\leq1$ with the same
	$C$, so they also cover policies depending on $\varepsilon$ and
	require no finite-$\varepsilon$ minimizer.
	
And, 	subtracting $B_0$ proves \eqref{eq:feedbackstrictbenefit}.
	Its right-hand side has the sign of $\kappa$ for $0<s<t_0$.
	Also, $B_\kappa(0)=B_\kappa(t_0)=0$ and $B_\kappa(s)>0$ in the
	interior. For $\kappa\geq0$ the latter is immediate; for $\kappa<0$, since 
	$b_s^2\leq(t_0-s)A_s$, then 
	\[
	t_0-s+\frac{2\kappa Gb_s^2}{d_\kappa(s)}
	\geq\frac{t_0-s}{1+2|\kappa|GA_s}>0.
	\]
	Since $B_\kappa$ is continuous, its positive maximum is attained
	in $(0,t_0)$.
	For $\kappa>0$, choose a maximizer $s_0\in(0,t_0)$ of $B_0$; then
	$k_\kappa^*\geq B_\kappa(s_0)>B_0(s_0)=k_0^*$.
	For $\kappa<0$, choose a maximizer $s_\kappa\in(0,t_0)$ of
	$B_\kappa$; then
	$k_\kappa^*=B_\kappa(s_\kappa)<B_0(s_\kappa)\leq k_0^*$.
	
	Finally, if $0\leq k<k_\kappa^*$,  then there exist an interval
	$I\subset(0,t_0)$ of positive length and $\delta>0$ such that
	$B_\kappa(s)-k\geq\delta$ on $I$. Hence
	$\int_0^{t_0}(B_\kappa(s)-k)^+\dd s\geq\delta|I|>0$.
	For $k\geq k_\kappa^*$ the integrand vanishes identically.
	This proves the threshold statement, including equality.
\end{proof}

For $\kappa>0$ and $k_0^*<k<k_\kappa^*$, $$
	\inf_{a,\psi,g}J_0(u^{\varepsilon;a,\psi,g})
	=J_{\mathrm{base}}+O(\varepsilon^2),$$
	and $$
	\inf_{a,\psi,g}J_\kappa(u^{\varepsilon;a,\psi,g})
	=J_{\mathrm{base}}
	-\varepsilon\int_0^{t_0}(B_\kappa(s)-k)^+\dd s
	+O(\varepsilon^2),$$ 
with a strictly positive integral. So for all sufficiently small $\varepsilon>0$, the policy $(a_\kappa^*,\psi_\kappa^*,g_0)$
gives a lower cost than the baseline. When $\kappa=0$, the first-order coefficient equals zero.
This does not mean we cannot still get improvements at second order.

The activation value and thresholds we found above apply only to the
policy family \eqref{eq:feedbackpolicy}. Under these policies,
the control stays zero until the first early arrival,
and the final decision only uses the count $N$.
Our analysis does not tell us the best possible value among  all admissible controls.
Different choices of $\psi$ yield different reference baselines in the reference space,
even though every one of them has the same physical baseline cost $J_{\mathrm{base}}$.

\section{Conclusion}

This paper develops a first-order variational analysis of partially
observed McKean--Vlasov jump-diffusions with bounded controlled
counting intensities that may vanish and depend on the unconditional
physical state distribution.
Under the stated regularity and moment assumptions, a deterministic upper bound on the likelihood supports one-sided state--likelihood differentiation
and a necessary stochastic maximum principle for a convex control
set, without inverse intensity or likelihood terms.
The physical probability has a one-sided total-variation derivative
on the fixed noise space, with a nonnegative singular contribution
from baseline-null histories.

A quadratic example shows how early activation changes the physical
variance of accumulated actions and thereby affects later observation
intensity. Within the specified policy family, under coercivity and
nonbinding response bounds, positive variance feedback raises the
first-order activation fee threshold relative to zero feedback,
while negative feedback lowers it.

	\bibliographystyle{plainnat}
	\bibliography{refers}
	
\end{document}